\DeclareSymbolFontAlphabet{\mathbb}{AMSb}
\DeclareSymbolFontAlphabet{\mathbbl}{bbold}
\numberwithin{equation}{section}
\theoremstyle{definition}
\newtheorem* {theorem*}{Theorem}
\newtheorem* {conjecture*}{Conjecture}
\newtheorem{theorem}{Theorem}[section]
\theoremstyle{definition}
\newtheorem* {example*}{Example}
\newtheorem{lemma}[theorem]{Lemma}
\theoremstyle{definition}
\newtheorem{definition}[theorem]{Definition}
\theoremstyle{definition}
\newtheorem{conjecture}[theorem]{Conjecture}
\newtheorem{proposition}[theorem]{Proposition}
\newtheorem{corollary}[theorem]{Corollary}
\newtheorem*{remark*}{Remark}
\theoremstyle{definition}
\newtheorem{remark}[theorem]{Remark}
\theoremstyle{definition}
\newtheorem {example}[theorem]{Example}
\theoremstyle{definition}
\theoremstyle{definition}
\theoremstyle{definition}
\def\({\left(}
\def\){\right)}
\newcommand{\sI}{\mathscr{I}}
\newcommand{\sK}{\mathscr{K}}
\newcommand{\CC}{\mathbb{C}}
\newcommand{\QQ}{\mathbb{Q}}
\newcommand{\CP}{\mathbb{P}}
\newcommand{\cK}{\mathcal{K}}
\newcommand{\cO}{\mathcal{O}}
\def\cX{\mathcal{X}}
\def\Hom{\mathrm{Hom}}
\def\Tor{\mathrm{Tor}}
\def\CC{\mathbb{C}}
\def\ZZ{\mathbb{Z}}
\def\Res{\mathrm{Res}}
\def\ch{\operatorname{ch}}
\def\spanning{\textnormal{-span}}
\newcommand{\cM}{\mathcal{M}}
\newcommand{\cN}{\mathcal{N}}
\def\barr{\begin{array}}
\def\earr{\end{array}}
\def\ba{\begin{aligned}}
\def\ea{\end{aligned}}
\def\be{\begin{equation}}
\def\ee{\end{equation}}
\def\qquand{\qquad\text{and}\qquad}
\def\quand{\quad\text{and}\quad}
\newcommand{\I}{\mathcal{I}}
\def\cM{\mathcal M}
\def\DesR{\mathrm{Des}_R}
\def\DesL{\mathrm{Des}_L}
\def\id{\mathrm{id}}
\def\ben{\begin{enumerate}}
\def\een{\end{enumerate}}
\def\fpf{{\textsf {FPF}}}
\def\D{\hat D}
\def\Des{\mathrm{Des}}
\def\FF{\mathbb{F}}
\def\Ifpf{\I_\fpf}
\newcommand{\rank}{\operatorname{rank}}
\renewcommand{\dim}{\operatorname{dim}}
\def\iG{\mathfrak{G}^{\textsf{O}}}
\def\iGconj{\widetilde{\mathfrak{G}}^{\textsf{O}}}
\def\arcstart{\ \xy<0cm,-.15cm>\xymatrix@R=.1cm@C=.3cm }
\newcommand{\arcstartc}[1]{\ \xy<0cm,-.15cm>\xymatrix@R=.1cm@C=#1cm}
\def\ellfpf{\ell_\fpf}
\def\cT{\mathcal{T}}
\def\cF{\mathcal{F}}
\newcommand{\Sym}{\textsf{Sym}}
\def\D{\textsf{D}}
\def\GQ{G\hspace{-0.2mm}Q}
\def\ss{/\hspace{-1mm}/}
\def\PP{\ZZ_{>0}}
\def\NN{\ZZ_{\geq 0}}
\newcommand{\cC}{\mathcal{C}}
\def\ss{/\hspace{-1mm}/}
\def\cU{\mathcal{U}}
\def\mon{\mathsf{mon}}
\def\init{\operatorname{init}}
\def\bei{\begin{itemize}}
\def\eei{\end{itemize}}
\definecolor{darkred}{rgb}{0.7,0,0} 
\newcommand{\defn}[1]{{\color{darkred}\emph{#1}}} 
\def\KK{\mathbb{K}}
\def\B{\mathsf{B}}
\def\cl{\mathrm{cl}}
\def\Mat{\mathsf{Mat}}
\def\ss{\mathrm{ss}}
\def\SSMat{\mathsf{Mat}^{\ss}}
\newcommand{\openX}{\mathring{X}}
\def\SSI{I^{\ss}}
\def\SSJ{J^{\ss}}
\def\SScM{\cM^{\ss}}
\def\SScX{\cU^{\ss}}
\def\ssx{u^{\ss}}
\def\SScF{\cF^{\ss}}
\def\sym{\mathrm{sym}}
\def\SymMat{\mathsf{Mat}^{\sym}}
\def\SymI{I^{\sym}}
\def\SymJ{J^{\sym}}
\def\SymcM{\cM^{\sym}}
\def\SymcX{\cU^{\sym}}
\def\MX{\mathsf{MSV}}
\def\openMX{\mathsf{MSC}}
\newcommand{\openSSX}{\openMX^{\ss}}
\def\SSX{\MX^{\ss}}
\newcommand{\openSymX}{\openMX^{\sym}}
\def\SymX{\MX^{\sym}}
\DeclareMathOperator{\Spec}{Spec}
\DeclareMathOperator{\Rep}{Rep}
\DeclareMathOperator{\pt}{pt}
\DeclareMathOperator{\wght}{wt}
\newcommand{\ltriang}{\raisebox{-0.5pt}{\tikz{\draw (0,0) -- (.25,0) -- (0,.25) -- (0,0);}}}
\def\adiag{\operatorname{adiag}}
\def\dom{\mathsf{dom}}
\def\DesFPF{\Des_V}
\newcommand{\IPDideal}{\SymJ}
\newcommand{\ltriangeq}{\ltriang}
\def\cF{\mathcal{T}}
\def\sK{\mathscr{P}}
\def\cN{\mathsf{Index}}
\def\colon{:}
\def\ch{\operatorname{\mathsf{char}}}
\def\cK{\mathcal{K}}
\def\cM{\mathcal{M}}
\begin{document}
\title{Ideal transition systems}
\author{
Eric Marberg \\ {\tt eric.marberg@gmail.com}
 \and 
 Brendan Pawlowski \\ {\tt br.pawlowski@gmail.com}
}

\date{}

\maketitle

\begin{abstract}
We study an inductive method of computing initial ideals and Gr\"obner bases for families of ideals  in a polynomial ring. 
This method starts from a given set of pairs $(I,J)$ where $I$ is any ideal and $J$ is a monomial ideal contained in the initial ideal of $I$. These containments become a system of equalities if one can establish a particular transition recurrence among the chosen ideals. We describe explicit constructions of such systems in two motivating cases---namely, for the ideals of matrix Schubert varieties and their skew-symmetric analogues. Despite many formal similarities with these examples, for the symmetric versions of matrix Schubert varieties, it is an open problem to construct the same kind of transition system. We present several conjectures that would follow from such a construction, while also discussing the special obstructions arising in the symmetric case.
\end{abstract}


\section{Introduction}
Fix a positive integer $n$ and let $\Mat_{n\times n}$ be the variety of $n\times n$ matrices over an algebraically closed field $\KK$.
Write $\B_n \subset \Mat_{n \times n}$ for the group of 
 invertible lower triangular matrices in this set. 
 
 Recall that a \defn{partial permutation matrix} is a binary matrix with at most one nonzero entry in each row and column.
 A version of Gaussian elimination shows that every $M \in \Mat_{n \times n}$ can be written as $b_1 w b_2^\top$ for $b_1, b_2 \in \B_n$ and a unique partial permutation matrix $w$. Equivalently, the $n\times n$ partial permutation matrices are representatives for the distinct $\B_n \times \B_n$-orbits on $\Mat_{n \times n}$. These orbits are called \defn{matrix Schubert cells} $\openMX_w$, and their Zariski closures are called \defn{matrix Schubert varieties} $\MX_w$. We can express these sets using rank conditions as
\begin{align} \label{eq:MX-defn}
    \openMX_w &= \{M \in \Mat_w : \text{$\rank M_{[i][j]} = \rank w_{[i][j]}$ for all $i,j \in [n]$}\}, \\
    \MX_w &= \{M \in \Mat_w : \text{$\rank M_{[i][j]} \leq \rank w_{[i][j]}$ for all $i,j \in [n]$}\}. \nonumber 
\end{align}
Here we write $[n] = \{1,2,\ldots,n\}$, and if $R, C \subseteq [n]$ then $M_{RC}$ denotes the submatrix of $M$ in rows $R$ and columns $C$. Thus, $M_{[i][j]}$ is the upper-left $i \times j$ corner of $M$.

Let $\cX = [x_{ij}]_{i,j\in[n]}$ be a generic matrix of indeterminates and let $\KK[\Mat_{n\times n}] = \KK[x_{ij} : i,j \in [n]]$ be the coordinate ring of $\Mat_{n\times n}$.  For a partial permutation matrix $w$, let $I_w$ be the ideal in $\KK[\Mat_{n\times n}] $ generated by all minors of size $\rank w_{[i][j]} + 1$ in $\cX_{[i][j]}$ for $i,j \in [n]$. 
Since $\rank M \leq r$ if and only if all $(r+1) \times (r+1)$ minors of $M$ vanish, the affine variety $\MX_w$ is exactly the zero set of $I_w$. 

Fulton \cite{FultonEssentialSet} showed that the equations setting these minors to zero also define $\MX_w$ scheme-theoretically, i.e., that $I_w$ is a radical ideal (in fact, it is prime). Knutson and Miller \cite{KnutsonMiller} proved that this generating set of minors is actually a Gr\"obner basis for $I_w$ under an appropriate term order. This fact allows for convenient computation of certain cohomological invariants associated to $I_w$, and can also be used to derive Fulton's result.

In \cite{MP2022}, we reproved these results using a new inductive approach. The key step in this approach was establishing a \defn{transition recurrence} of the form
\begin{equation} 
\textstyle \label{eq:intro-transition}
    I_w + \langle x_{ij} \rangle = \bigcap_{v \in \Phi} I_v
\end{equation}
and using it to deduce properties of $I_w$ from inductively known properties of the ideals $I_v$. 
Here $x_{ij}$ is a particular variable not contained in $I_w$,  $\langle x_{ij} \rangle$ is the principal ideal that $x_{ij}$ generates,
and $\Phi$ is a certain set of partial permutations.

There is a geometric reason for such recurrences to exist. If $w$ and $y$ are two partial permutation matrices, then $\MX_w \cap \MX_y$ is a closed $\B_n$-stable set and therefore decomposes as a union $\bigcup_{v \in \Phi} \MX_v$. A result of Ramanathan \cite{Ramanathan} implies that the scheme $\MX_w \cap \MX_y$ is actually a reduced union of matrix Schubert varieties, so the stronger equality $I_w + I_y = \bigcap_{v \in \Phi} I_v$ also holds. The transition recurrence then follows by an appropriate choice of $y$. 

\begin{example} \label{ex:transition}
    Let $w =w_1w_2w_3= 132 = \left[ \begin{smallmatrix} 1 & 0 & 0 \\ 0 & 0 & 1 \\ 0 & 1 & 0 \end{smallmatrix} \right]$, where we identify $w$ with the permutation matrix having $1$'s in positions $(i,w_i)$. Then every rank condition $\rank M_{[i][j]} \leq \rank w_{[i][j]}$ is vacuous except when $i = j = 1$, giving 
    \begin{equation*}
        \MX_{132} = \left\{A \in \Mat_{3\times 3} : \rank A_{[2][2]} \leq 1\right\} \qquad \text{and} \qquad I_{132} = \langle u_{21}u_{12} - u_{11}u_{22}\rangle.
    \end{equation*}
    The transition recurrence in this case reads as
    \begin{equation*}
        I_{132} + \langle u_{11}\rangle  = \langle u_{11}, u_{21}u_{12}\rangle = \langle u_{11}, u_{21}\rangle \cap \langle u_{11}, u_{12}\rangle = I_{231} \cap I_{312}.
    \end{equation*}
\end{example}

The situation is much the same if we replace $\Mat_{n\times n}$ with the subset $\SSMat_{n\times n}$
of skew-symmetric $n\times n$ matrices over $\KK$, now considering the orbits of the $\B_n$-action $b \cdot M = bMb^\top$. The corresponding $\B_n$-orbits $\openSSX_w$  in $\SSMat_{n\times n}$ and their closures $\SSX_w$ are defined by the same rank conditions as in \eqref{eq:MX-defn}, but now the orbit representatives $w$ are skew-symmetric $\{0,1,-1\}$-matrices with at most one nonzero entry in each row and column.  

 In our previous work \cite{MP2022}, we constructed Gr\"obner bases for the prime ideals of the \defn{skew-symmetric matrix Schubert varieties} $\SSX_w$. These generating sets consist of Pfaffians of principal submatrices of a generic skew-symmetric matrix, and are more complicated than the minors generating the ideals $I_w$ above. However, the proof technique based on transition recurrences is the same as in the classical case.
It would be interesting to know if there is some geometric result similar to \cite{Ramanathan} that explains these skew-symmetric transition recurrences; at present, this is an open question.

The first, partially expository goal of this article is to present a streamlined version of the 
proof techniques just described.
Specifically, we formalize 
an inductive method of computing initial ideals and Gr\"obner bases 
for families of ideals  in a polynomial ring, in terms of what we call \defn{transition systems}. Such a system is composed of a set of pairs $(I,J)$ where 
$I$ is any ideal and $J$ is a monomial ideal  contained in the initial ideal of $I$.
These containments become a system of equalities 
if one can establish transition recurrences analogous to \eqref{eq:intro-transition} among the chosen ideals. 
Section~\ref{ts-sect} develops
the general setup and nontrivial properties of transition systems, following some preliminaries in Section~\ref{prelim-sect}.
Our motivating examples appear in
Sections~\ref{schumat-sect1} and \ref{schumat-sect2}, where we present transition systems 
that compute Gr\"obner bases for the matrix Schubert varieties $\MX_w$ and $\SSX_w$.

Our second goal in this paper is to explain some conjectures related to
 another interesting variant of the matrix Schubert construction, where $\Mat_{n\times n}$ is replaced by the subset of symmetric matrices $\SymMat_{n\times n}$. Using the same action as in the skew-symmetric case, one may consider the $\B_n$-orbits $\openSymX_w$ in $\SymMat_{n\times n}$ and their closures $\SymX_w$,
 which we call \defn{symmetric matrix Schubert varieties}.
 These may again be defined by the rank conditions in \eqref{eq:MX-defn}, but restricted to symmetric matrices.
 When $\KK$ does not have characteristic two (as we assume in Example~\ref{intro-ex} below), the corresponding orbit representatives $w$ are provided by all symmetric $n\times n$ partial permutation matrices. 
 
 Let $\SymcX = [u_{ij}]_{i,j\in[n]}$ be a symmetric matrix of indeterminates with $u_{ij} = u_{ji}$ and write $\KK[\SymMat_{n\times n}] = \KK[u_{ij} : 1 \leq j \leq i \leq n ]$ for the coordinate ring of $\SymMat_{n\times n}$.  
 Then, for any symmetric matrix $w$, we can define an ideal $\SymI_w$ of $\KK[\Mat_{n\times n}] $ generated by all minors of size $\rank w_{[i][j]} + 1$ in $\cU_{[i][j]}$ for $i,j \in [n]$. 
The affine variety $\SymX_w$ is the zero set of $\SymI_w$.

 Now matters become less pleasant: the intersection of two varieties $\SymX_w$ need not be a reduced union of varieties $\SymX_v$, and no transition recurrence exactly like \eqref{eq:intro-transition} holds with the family of ideals $\Sym_w$. The next example demonstrates the sort of bad behavior that can occur. 
 
\begin{example}\label{intro-ex}
The permutation $w=132$ indexes the symmetric matrix Schubert variety
    \begin{equation*}
        \SymX_{132} = \{A \in \SymMat_{3\times 3} : \rank A_{[2][2]} \leq 1\}, \quad \text{which has prime ideal } \SymI_{132} = \langle u_{21}^2 - u_{11}u_{22}\rangle,
    \end{equation*}
    while $\SymX_{213} = \{A \in \SymMat_{3\times 3} : A_{11} = 0\}$ has ideal $\SymI_{213} = \langle u_{11}\rangle$. As a variety, $\SymX_{132} \cap \SymX_{213}$ is just the set of symmetric matrices $\left[ \begin{smallmatrix} 0 & 0 & \ast \\ 0 & \ast & \ast \\ \ast & \ast & \ast \end{smallmatrix} \right]$. However, the scheme structure is non-reduced as
    \begin{equation*}
        \SymI_{132} + \SymI_{213} = \langle u_{21}^2 - u_{11}u_{22}\rangle + \langle u_{11}\rangle = \langle u_{11}, u_{21}^2\rangle.
    \end{equation*}
\end{example}

Because of obstructions like this one, the set of ideals $\SymI_w$ is too small to be assembled 
into a transition system as defined in Section~\ref{ts-sect}.
Nevertheless, we conjecture that a larger transition system exists which includes these ideals.
We explain this conjecture precisely in Section~\ref{schumat-sect3} (see Conjecture~\ref{sym-conj}) and present some evidence that it holds.

There, we also outline some appealing consequences of such a result, for which we have more computational support.
In detail, our main conjecture would imply that each ideal $\SymI_w$ is both radical and prime (Conjecture~\ref{sym-conj4}),
and that the ideal's original generating set of minors is a Gr\"obner basis under an appropriate term order (Conjecture~\ref{sym-conj3b}).
These properties have been checked by computer when $\ch(\KK)=0$ for $n\leq 7$.
Conjecture~\ref{sym-conj} would also imply that $\SymI_w$ is the ideal of the variety $\SymX_w$ (Conjecture~\ref{sym-conj3a}).

In Section~\ref{stable-sect} we study a more involved application of this last conjectural property.
Specifically, assuming Conjecture~\ref{sym-conj3a} as a hypothesis,
we show that the \defn{orthogonal Grothendieck polynomials} introduced in \cite{MP2020} have well-defined stable limits.
This reduces the open problem \cite[Prob.~5.3]{MP2020} to the task of  constructing a transition system that includes all of the ideals $\SymI_w$.

\subsection*{Acknowledgments}

This work was partially supported by Hong Kong RGC grants  16306120 and 16304122. 
We thank 
Darij Grinberg
for helpful comments regarding Lemma~\ref{lem:plus-vs-intersection}, and Allen Knutson for helping to clarify our approach in \S 7.

\section{Preliminaries}\label{prelim-sect}

Throughout this article, $\KK$ denotes an arbitrary field that is algebraically closed.

\subsection{Gr\"obner bases}

Fix a positive integer $N$, let 
$x_1,x_2,\dots,x_N$ be commuting variables, and consider the polynomial ring $\KK[{\bf x}] := \KK[x_1,x_2,\dots,x_N]$. 

A \defn{monomial} in $\KK[{\bf x}]$ is an element of the form $x_{i_1}^{e_1}x_{i_2}^{e_2}\dots x_{i_l}^{e_l}$ where the indices $i_1,i_2,\dots,i_l$ are distinct and the exponents $e_1,e_2,\dots,e_l \geq 0$ are nonnegative.
A \defn{monomial ideal} of $\KK[{\bf x}]$ is an ideal generated by a set of monomials.
A \defn{homogeneous} element of $\KK[{\bf x}]$ is a $\KK$-linear combination of monomials of the same degree.
A \defn{homogeneous ideal} of $\KK[{\bf x}]$ is an ideal generated by a set of homogenous elements.

A \defn{term order} on $\KK[{\bf x}] $ is a total order on the set of  all monomials,
such that $1$ is the unique minimum and such that if $\mon_1$, $\mon_2$, and $\mon_3$ are monomials with $\mon_1 \leq \mon_2$, 
then $\mon_1  \mon_3 \leq \mon_2 \mon_3$. 
Some common examples of term orders are discussed below.

\begin{example}
The \defn{lexicographic term order} on $\KK[{\bf x}]$
has 
$x_1^{a_1}x_2^{a_2}\cdots x_N^{a_N} \leq x_1^{b_1}x_2^{b_2} \cdots x_N^{b_N}$ if and only if
$(a_1,a_2, \ldots, a_N) \leq (b_1, b_2,\ldots, b_N)$ in lexicographic order,
meaning that if there is a smallest index $i$ with $a_i \neq b_i$ then $a_i<b_i$.
\end{example}

\begin{example}
The \defn{(graded) reverse lexicographic term order} on $\KK[{\bf x}]$ has 
\be\label{grlto-eq}
x_1^{a_1}x_2^{a_2}\cdots x_N^{a_N} \leq x_1^{b_1}x_2^{b_2} \cdots x_N^{b_N}
\ee if and only if both 
$ \sum_{i=1}^N a_i \leq \sum_{i=1}^N b_i $ and $ (a_N, \ldots, a_2,a_1) \geq (b_N, \ldots, b_2,b_1)$
in lexicographic order.
\end{example}

\begin{remark}\label{rlto-rem} Later we will consider the reverse lexicographic term order on 
polynomial rings generated by variables $x_{ij}$ indexed by certain pairs $(i,j) \in \ZZ\times \ZZ$.
In this context, we order the indexing pairs $(i,j)$ lexicographically 
to write products of variables in the form \eqref{grlto-eq}.
Then the reverse lexicographic term order
is the unique total order $<$ such that:
\begin{itemize}

\item[(a)] For  monomials of different degrees one has $\mon_1 < \mon_2$  if   $\deg (\mon_1)< \deg (\mon_2)$.

\item[(b)] If $\mon_1 = \prod_{(i,j)} x_{ij}^{a_{ij}}$ and  $\mon_2 = \prod_{(i,j)} x_{ij}^{b_{ij}}$ are distinct with the same degree then one has $\mon_1 < \mon_2$ when  $a_{ij}> b_{ij}$
for the lexicographically largest index $(i,j)$ with $a_{ij}\neq b_{ij}$.
\end{itemize}
Under this term order, if $\mon_1$ and $\mon_2$ are both square-free and of the same degree, then  $\mon_1 < \mon_2$ if and only there is some variable $x_{ij}$ that does not divide both monomials, and the lexicographically largest such variable divides $\mon_1$ but not $\mon_2$.
\end{remark}

\begin{example}Under the reverse lexicographic term order on $\KK[x_{11},x_{12},x_{21},x_{22}]$ one has 
\[ \ba 
1 & < x_{22} < x_{21} < x_{12} < x_{11} \\
& < x_{22}^2 < x_{21}x_{22} < x_{12}x_{22} < x_{11}x_{22} \\
&<  x_{21}^2 <  x_{12}x_{21} < x_{11}x_{21}
\\& < x_{12}^2 < x_{11}x_{12} 
\\& < x_{11}^2 < \cdots .
\ea
\]
\end{example}

Fix a term order and suppose $f = \sum_{\mon} c_\mon \cdot \mon \in \KK[{\bf x}]$ where the sum is over all monomials $\mon \in \KK[{\bf x}]$ and all but finitely many of the coefficients $c_\mon \in \KK$ are zero.
The \defn{initial term} $\init(f)$ of $f$ is either the maximal monomial $\mon$ such that $c_\mon \neq 0$, or zero when $f=0$.

Choose an ideal $I\subseteq \KK[{\bf x}]$. The \defn{initial ideal} of $I$ is
$\init(I) := \KK\spanning\{\init(f) : f \in I\}.$
This vector space is a monomial ideal, and it is a straightforward exercise to check that  if  $I$ is already a monomial ideal then $\init(I)=I$.
A \defn{Gr\"obner basis} $G$ for $I$, relative to the chosen term order,
is a generating set for $I$ such that $\{ \init(g) :g \in G\}$ generates $\init(I)$.

\begin{example}
If $f \in \KK[{\bf x}]$ is any polynomial and $I \subseteq \KK[{\bf x}]$ is any ideal then
\be
\init(\langle f \rangle) = \langle \init (f) \rangle \quand \init(f I) = \init(f) \init(I).
\ee 
Therefore $G=\{f\}$ is a Gr\"obner basis for the principal ideal $\langle f\rangle$.
\end{example}

\begin{example}
If $I\subsetneq \KK[{\bf x}]$ is a maximal ideal then $I = \langle x_1-c_1,x_2-c_2,\dots,x_N-c_N\rangle$ for certain constants $c_i \in \KK$,
and for any term order $\init(I)$ is the \defn{irrelevant ideal} $\langle x_1,x_2,\dots,x_N\rangle \subsetneq \KK[{\bf x}]$.
In this case the ideal's given generating set $G = \{ x_1-c_1,x_2-c_2,\dots,x_N-c_N\}$ is a Gr\"obner basis. 
\end{example}

Not every generating set for an ideal is a Gr\"obner basis. The ideal $I = \langle x_1,x_2\rangle \subset \KK[x_1,x_2]$ 
is generated by $\{x_2-x_1, x_2\}$, but this is not a Gr\"obner basis for any term order with 
$x_1<x_2$. 
However,   if $I\subseteq \KK[{\bf x}]$
is any ideal then $\init(\init(I)) = \init(I)$.
This means that if $S$ is any set of monomials in $\KK[{\bf x}]$ then $S$ is a Gr\"obner basis for the  ideal  $\langle S\rangle$.

\subsection{More on initial ideals}

We continue the setup of the previous section. There are a few additional properties of initial ideals that will be of use later.
To begin, we recall a few basic facts derived in \cite{MP2022}.

    \begin{proposition}[{See \cite[Prop.~2.4]{MP2022}}] \label{prop:init-facts} Suppose $I$ and $J$ are ideals in $\KK[{\bf x}]$.   Then:
        \begin{enumerate}
            \item[(a)]  It always holds that $\init(I)+\init(J) \subseteq \init(I+J)$ and $\init(I \cap J) \subseteq \init(I) \cap \init(J)$.
            \item[(b)] Assume $I \subseteq J$. Then $\init(I) \subseteq \init(J)$, and if $\init(I) = \init(J)$ then $I = J$.
        \end{enumerate}
    \end{proposition}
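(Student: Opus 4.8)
The plan is to prove each claim of Proposition~\ref{prop:init-facts} directly from the definitions, since all statements are elementary properties of initial terms that unwind mechanically.

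For part (a), I would start from the defining description $\init(I) = \KK\spanning\{\init(f) : f \in I\}$. Since $\init(I)$ and $\init(J)$ are both contained in $\init(I+J)$ (because $I \subseteq I+J$ and $J \subseteq I+J$, and $\init$ is visibly monotone under inclusion on the generating spanning sets), and since $\init(I+J)$ is an ideal hence closed under sums, we get $\init(I)+\init(J) \subseteq \init(I+J)$. For the intersection statement, if $f \in I \cap J$ then $f \in I$ gives $\init(f) \in \init(I)$ and $f \in J$ gives $\init(f) \in \init(J)$, so every generator $\init(f)$ of $\init(I\cap J)$ lies in $\init(I)\cap\init(J)$; since the latter is a $\KK$-subspace (indeed an ideal), it contains the span, giving $\init(I\cap J) \subseteq \init(I)\cap\init(J)$.

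For part (b), the inclusion $\init(I)\subseteq\init(J)$ when $I \subseteq J$ is immediate: every $f \in I$ lies in $J$, so $\init(f)$ is among the generators of $\init(J)$. The real content is the second assertion: $I \subseteq J$ and $\init(I)=\init(J)$ forces $I=J$. This is the step I expect to be the main obstacle, and I would handle it by the standard division/leading-term argument. Suppose for contradiction that $I \subsetneq J$, and choose $g \in J \setminus I$ with $\init(g)$ minimal in the term order among all elements of $J\setminus I$ (such a minimum exists because a term order is a well-order on monomials—it has $1$ as unique minimum and is compatible with multiplication, so any nonempty set of monomials has a least element). Since $g \in J$, we have $\init(g) \in \init(J) = \init(I)$, so there is some $f \in I$ with $\init(f) = \init(g)$; after scaling $f$ by a unit of $\KK$ we may assume the leading coefficients of $f$ and $g$ agree. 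Then $g' := g - f$ satisfies $\init(g') < \init(g)$ (strictly, since the leading terms cancel and all remaining monomials of $f$ and $g$ are smaller). Also $g' \in J$ because $g \in J$ and $f \in I \subseteq J$; and $g' \notin I$, for otherwise $g = g' + f \in I$, contradicting the choice of $g$. Thus $g' \in J \setminus I$ with $\init(g') < \init(g)$, contradicting minimality. Hence $J \setminus I$ is empty and $I = J$.

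The only subtle point worth stating carefully is the well-ordering property of a term order, which I would either invoke as standard (it follows from Dickson's lemma, or directly: any infinite strictly decreasing chain of monomials would, via compatibility with multiplication, contradict $1$ being the minimum) or cite from the preliminaries. Everything else is a routine unwinding of the definition of $\init$, and I would keep the writeup to a few lines per part.
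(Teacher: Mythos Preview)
Your proof is correct and follows the standard elementary argument. Note that the paper does not actually prove this proposition: it simply cites it as \cite[Prop.~2.4]{MP2022}, so there is no ``paper's own proof'' to compare against. Your handling of part (b) via the well-ordering of term orders and a minimal-counterexample argument is the expected route, and your flagging of the well-ordering property (which is not part of the paper's definition of term order but follows from Dickson's lemma) is appropriate.
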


\begin{corollary}\label{cor:init-facts}
Any subset $G$ of an ideal $I$ satisfying $\init(I) \subseteq \langle \init(g) : g \in  G \rangle$
is a Gr\"obner basis.
\end{corollary}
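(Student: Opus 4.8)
The statement unwinds directly from Proposition~\ref{prop:init-facts}(b). Recall that, by definition, $G \subseteq I$ is a Gr\"obner basis for $I$ precisely when (i) $G$ generates $I$ and (ii) $\{\init(g) : g \in G\}$ generates $\init(I)$. The hypothesis gives us one containment toward (ii) for free; the real content is to bootstrap from it to the other containment and to (i) simultaneously.

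First I would dispatch (ii). Since $G \subseteq I$, each $g \in G$ lies in $I$, so $\init(g) \in \init(I)$, and hence $\langle \init(g) : g \in G\rangle \subseteq \init(I)$ because $\init(I)$ is an ideal. The hypothesis supplies the reverse inclusion $\init(I) \subseteq \langle \init(g) : g \in G\rangle$, so in fact $\langle \init(g) : g \in G\rangle = \init(I)$, which is (ii).

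Next, for (i), set $I' := \langle G \rangle$, so $I' \subseteq I$. Every $g \in G$ lies in $I'$, so $\init(g) \in \init(I')$ and therefore $\langle \init(g) : g \in G\rangle \subseteq \init(I')$. Combining this with the hypothesis and with Proposition~\ref{prop:init-facts}(b) applied to $I' \subseteq I$, we get
\[
\init(I) \subseteq \langle \init(g) : g \in G\rangle \subseteq \init(I') \subseteq \init(I),
\]
so all these containments are equalities; in particular $\init(I') = \init(I)$. Now the second assertion of Proposition~\ref{prop:init-facts}(b), again applied to $I' \subseteq I$, forces $I' = I$, i.e.\ $G$ generates $I$. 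This establishes (i) and completes the proof.

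\textbf{Expected obstacle.} There is essentially none: the only subtlety is recognizing that one should introduce the auxiliary ideal $I' = \langle G\rangle$ and invoke the ``$\init$-equality implies equality'' half of Proposition~\ref{prop:init-facts}(b), rather than trying to argue that $G$ generates $I$ by hand (e.g.\ via a division-algorithm argument). With that observation the proof is a two-line chain of inclusions.
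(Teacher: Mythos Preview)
Your proof is correct and follows essentially the same approach as the paper: both set up the chain $\init(I) \subseteq \langle \init(g) : g \in G\rangle \subseteq \init(\langle G\rangle) \subseteq \init(I)$, conclude all containments are equalities, and then invoke Proposition~\ref{prop:init-facts}(b) to deduce $\langle G\rangle = I$. Your separate verification of (ii) is slightly redundant since it already falls out of the chain, but the argument is otherwise identical.
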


\begin{proof}
In this case  
  $\init(I) \subseteq  \langle \init(g) : g \in  G \rangle\subseteq  \init(\langle G \rangle) \subseteq  \init(I)$ so all containments must be equality.
 Applying Proposition~\ref{prop:init-facts}(b) to $\langle G \rangle \subseteq I$ shows that
$I=\langle G\rangle $ so $G$ is a Gr\"obner basis.
\end{proof}

    The next result is a stronger version of \cite[Lem.~2.5]{MP2022}.

\begin{lemma}
\label{lem:plus-vs-intersection} Suppose $I$ and $J$ are ideals in $\KK[{\bf x}]$
with $\init(I+J) = \init(I) + \init(J)$. Then $\init(I \cap J) = \init(I) \cap \init(J)$. \end{lemma}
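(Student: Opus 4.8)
The plan is to prove the nontrivial containment $\init(I)\cap\init(J)\subseteq\init(I\cap J)$, since the reverse inclusion is already given by Proposition~\ref{prop:init-facts}(a). So suppose $\mon$ is a monomial lying in $\init(I)\cap\init(J)$; I want to produce an element of $I\cap J$ whose initial term is $\mon$. By definition there are $f\in I$ and $g\in J$ with $\init(f)=\init(g)=\mon$; after scaling we may assume the coefficient of $\mon$ in each is $1$, so that $h_0:=f-g$ has $\init(h_0)<\mon$. The idea is to correct $f$ (or $g$) by successively subtracting off elements of $I\cap J$ so as to kill all terms below $\mon$ while keeping $\mon$ itself, arranging that the final polynomial still lies in $I\cap J$.

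The key step is to exploit the hypothesis $\init(I+J)=\init(I)+\init(J)$. First I would observe that $h_0=f-g\in I+J$, so $\init(h_0)\in\init(I+J)=\init(I)+\init(J)$; since the latter is a monomial ideal and $\init(h_0)$ is a monomial, $\init(h_0)$ is divisible by some $\init(p)$ with $p\in I$ or some $\init(q)$ with $q\in J$. Using a monomial multiple of such $p$ (resp.\ $q$) I can subtract from $f$ (resp.\ from $g$) to cancel the term $\init(h_0)$ in $f-g$ without disturbing $\mon$, staying inside $I$ (resp.\ $J$). This strictly decreases $\init(f-g)$. I would formalize this as a descent argument; to make it terminate I would work modulo a high power of the variables, i.e.\ pass to the Artinian quotient $\KK[\mathbf x]/\mathfrak m$ where $\mathfrak m$ is generated by suitable powers $x_i^{d}$ with $d$ larger than the total degree of $\mathrm{\mon}$, so that only finitely many monomials $\le\mon$ exist and the process must halt with $f-g=0$, i.e.\ with a common polynomial $\tilde f=\tilde g\in I\cap J$ having $\init=\mon$. (Alternatively one can phrase the whole argument without quotienting, using that $\{$monomials $\le\mon\}$ is finite for a term order with $1$ minimal — this is automatic since $\mon$ has finitely many divisors and a term order is a well-order on the monomials below any fixed one.)

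I expect the main obstacle to be making the descent step completely rigorous: one must check that subtracting a monomial multiple $c\,\mathrm{(monomial)}\cdot p$ really does lower $\init(f-g)$ and never raises it above $\mon$, which requires that $\init(\mathrm{monomial}\cdot p)=\mathrm{monomial}\cdot\init(p)$ (true for term orders) and a careful bookkeeping of which terms can appear. A secondary subtlety is that $\init(h_0)$ might be divisible by an initial term from $I$ \emph{and} one from $J$ in incompatible ways; but since I only need to cancel that single monomial, choosing either one suffices, so this does not actually cause trouble. Once the descent terminates, the resulting element certifies $\mon\in\init(I\cap J)$, and since $\init(I\cap J)$ is a monomial ideal containing every monomial of $\init(I)\cap\init(J)$, the desired equality $\init(I\cap J)=\init(I)\cap\init(J)$ follows, completing the proof.
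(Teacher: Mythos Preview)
Your approach is essentially the same as the paper's: both arguments pick $f\in I$ and $g\in J$ with $\init(f)=\init(g)=\mon$, use the hypothesis to locate $\init(f-g)$ in $\init(I)$ or $\init(J)$, and subtract a suitable element of $I$ (resp.\ $J$) from $f$ (resp.\ $g$) to strictly lower $\init(f-g)$, iterating until $f=g\in I\cap J$. The paper packages this as induction on the ``floor'' $\init(f-g)$ of a ``bridge'' $(f,g)$.

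One small correction on termination: your parenthetical claim that $\{\text{monomials}\le\mon\}$ is finite is false in general (e.g.\ under lex on $\KK[x,y]$ there are infinitely many monomials below $x$, namely all powers of $y$), and the Artinian quotient detour is unnecessary. The clean justification, which the paper uses implicitly, is that every term order is a well-order (a standard consequence of Dickson's lemma), so no infinite strictly decreasing chain of monomials exists and the descent terminates. Also, a minor slip: you are not ``subtracting off elements of $I\cap J$'' but rather adjusting $f$ by elements of $I$ or $g$ by elements of $J$ until they coincide.
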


\begin{proof}
  The following proof was explained to us by Darij Grinberg.
Define a \defn{bridge} to be a pair $(f, g)$ where $f \in I$ and $g \in
J$ and $\init(f) = \init(g) > \init(f-g)$, where $>$ denotes our term order on monomials and where $\mon >0$ for all monomials.
Note that this requires $f$ and $g$ to have the same leading monomial with the same coefficient, unless one has $f=g=0$.
Define the \defn{floor} of a bridge $(f, g)$ to be $\init(f - g)$.

We claim that if $(f, g)$ is a bridge, then $\init (f) = \init(g) \in \init(I \cap J)$.
We argue by induction on the bridge floor $\init(f - g)$.
The base case when $\init(f-g)=0$ arises if and only if $f=g$, and then the desired property is clear.
Assume $f\neq g$. 
Since $f - g \in I + J$, we have $\init(f - g) \in \init(I + J) = \init
(I) + \init (J)$. As $\init I$ and $\init J$ are monomial ideals, the monomial $\init(f - g)$
 must belong to $\init (I)$ or $\init
(J)$. 

First suppose $\init(f - g) \in \init (I)$.
Then $\init(f - g) = \init (i)$ for
some $0\neq i \in I$. Define $\lambda \in \KK$ to be the leading
term of $f - g$ divided by the leading term of $i$.
Then the pair $(f - \lambda i, g)$ is  a bridge with
a lower floor than $(f, g)$. By induction, we have $\init (f - \lambda i)
= \init (g) \in \init(I \cap J)$. Since $\init(f) = \init(f-\lambda i)$, this proves the claim when $\init(f - g) \in \init (I)$.
The argument when $\init(f - g) \in \init (J)$ is similar.

This proves the claim.
To deduce the lemma, observe that any
monomial in $\init(I) \cap \init(J)$
can be written as $\init (f) = \init (g)$ for some bridge $(f, g)$, so is contained in $\init(I\cap J)$ by the claim.
Therefore  $\init(I) \cap \init(J)\subseteq \init(I\cap J)$, while the reverse containment holds by 
Proposition~\ref{prop:init-facts}.
\end{proof}

    \begin{lemma}[{\cite[Lem.~2.6]{MP2022}}]
    \label{lem:hilbert} If $J \subseteq I$ are homogeneous ideals in $\KK[{\bf x}]$ and $f \in \KK[{\bf x}]$ is a non-constant homogeneous polynomial such that $I \cap \langle f\rangle  = fI$ and $I + \langle f\rangle  = J + \langle f\rangle $, then $I = J$. \end{lemma}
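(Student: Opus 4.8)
The plan is to prove the equality $I = J$ by a Hilbert series (Hilbert function) comparison, using the hypotheses to show that $I$ and $J$ have the same graded dimension in every degree. Since $J \subseteq I$, it suffices to show $\dim_\KK (I/J)_d = 0$ for all $d$, equivalently that the graded vector spaces $I$ and $J$ agree degreewise; an inclusion of graded vector spaces that is an equality in each degree is an equality.

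First I would record the two exact sequences of graded $\KK[\mathbf{x}]$-modules coming from the hypotheses. Multiplication by the homogeneous polynomial $f$ of degree $e := \deg f \geq 1$ gives, for each ideal, a degree-shift: since $I \cap \langle f \rangle = fI$, the natural map yields a short exact sequence
\[
0 \longrightarrow I(-e) \xrightarrow{\ \cdot f\ } I \longrightarrow I/(I \cap \langle f\rangle) \longrightarrow 0,
\]
and $I/(I\cap\langle f\rangle) \cong (I + \langle f\rangle)/\langle f \rangle$. The same reasoning with $J$ in place of $I$ requires $J \cap \langle f\rangle = fJ$, which is \emph{not} among the stated hypotheses, so I would instead work with the quotient ring $R := \KK[\mathbf{x}]/\langle f\rangle$ and the images $\bar I, \bar J$ of $I, J$ there. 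The hypothesis $I + \langle f\rangle = J + \langle f\rangle$ says precisely that $\bar I = \bar J$ as ideals (equivalently graded subspaces) of $R$.

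Next I would translate these into Hilbert series. Writing $\mathrm{HS}(M;t) = \sum_d (\dim_\KK M_d)\, t^d$ for a graded space $M$, the short exact sequence above gives
\[
\mathrm{HS}(I;t) = t^e \,\mathrm{HS}(I;t) + \mathrm{HS}\bigl((I+\langle f\rangle)/\langle f\rangle;\,t\bigr),
\]
so $\mathrm{HS}(I;t) = \dfrac{\mathrm{HS}\bigl((I+\langle f\rangle)/\langle f\rangle;\,t\bigr)}{1 - t^e}$. Now the key point: $(I+\langle f\rangle)/\langle f\rangle = \bar I = \bar J = (J+\langle f\rangle)/\langle f\rangle$ by the second hypothesis, so the numerators agree and hence $\mathrm{HS}(I;t) = \mathrm{HS}(\bar I;t)/(1-t^e)$, and likewise I want $\mathrm{HS}(J;t) = \mathrm{HS}(\bar J;t)/(1-t^e)$. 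The latter identity needs $J \cap \langle f\rangle = fJ$, the one hypothesis I do not have for $J$. The main obstacle is therefore to \emph{deduce} $J \cap \langle f\rangle = fJ$ from what is given — or to circumvent it. One clean way around it: from $J \subseteq I$ and $I \cap \langle f\rangle = fI$ one gets $J \cap \langle f \rangle \subseteq I \cap \langle f\rangle = fI$, so every element of $J \cap \langle f\rangle$ has the form $fg$ with $g \in I$; I would then argue that such $g$ actually lies in $J$, using $J \subseteq I$ together with degree considerations and the homogeneity of $f$ (if $fg \in J$ and $g \in I$, then looking modulo $\langle f \rangle$ and using $\bar I = \bar J$ pins $g$ down modulo $\langle f\rangle$, and iterating / inducting on degree forces $g \in J$). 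This gives $J \cap \langle f\rangle \subseteq fJ$, and the reverse inclusion is automatic, establishing $J \cap \langle f\rangle = fJ$.

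With $J \cap \langle f\rangle = fJ$ in hand, the argument closes: both Hilbert series equal $\mathrm{HS}(\bar I;t)/(1-t^e)$ (using $\bar I = \bar J$), so $\mathrm{HS}(I;t) = \mathrm{HS}(J;t)$, i.e. $\dim_\KK I_d = \dim_\KK J_d$ for every $d$. Combined with $J \subseteq I$ this forces $I_d = J_d$ in each degree and hence $I = J$. I would present the Hilbert-series bookkeeping tersely (it is routine once the exact sequences are set up), spend the real effort on the lemma $J \cap \langle f\rangle = fJ$, and note that homogeneity of $f$ and of the ideals is used precisely to make the degree induction and the exact sequences work.
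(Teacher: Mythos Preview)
The paper does not prove this lemma at all; it is stated with a citation to \cite{MP2022} and no argument. So there is nothing to compare against except the correctness of your proposal.

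Your argument is correct, but it takes an unnecessary detour. The iteration you sketch to establish $J \cap \langle f\rangle = fJ$ does not actually use the hypothesis $fg \in J$: given any homogeneous $g \in I$, the equality $\bar I = \bar J$ lets you write $g = j_0 + f g_1$ with $j_0 \in J$, and then $f g_1 = g - j_0 \in I \cap \langle f\rangle = fI$ forces $g_1 \in I$ with $\deg g_1 = \deg g - e$. Iterating yields
\[
g = j_0 + f j_1 + f^2 j_2 + \cdots + f^{k} g_k,\qquad j_i \in J,\ g_k \in I,\ \deg g_k = \deg g - ke,
\]
and for $k$ large enough $g_k = 0$, so $g \in J$. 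This already proves $I \subseteq J$ directly, making the Hilbert series bookkeeping (and the intermediate claim $J \cap \langle f\rangle = fJ$) superfluous. The lemma's label \texttt{lem:hilbert} suggests the original proof in \cite{MP2022} may indeed run through Hilbert functions, so your framing is natural; but once you wrote down the degree-descent, you were done.

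If you prefer to keep the Hilbert series argument, you can also avoid proving $J \cap \langle f\rangle = fJ$ by using only the inequality $\mathrm{HS}(J/fJ;t) \geq \mathrm{HS}(\bar J;t) = \mathrm{HS}(\bar I;t) = (1-t^e)\,\mathrm{HS}(I;t)$ coefficientwise, which combined with $J \subseteq I$ forces $\dim J_d - \dim I_d$ to be nonpositive and nonincreasing along residues mod $e$, hence identically zero.
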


The conclusion of this lemma does not hold if the ideals involved are inhomogeneous.
  This can be seen by considering $f = x_1 \in \KK[x_1,x_2]$ with $I = \langle x_2\rangle $ and $J = \langle x_2-x_1x_2^2\rangle$. Geometrically, the lemma is giving a situation where the properness of a containment of varieties can be tested by intersecting with a hypersurface. In affine space it can happen that one or both intersections are empty, whereas in projective space a hypersurface must intersect a positive-dimensional variety, so it is unsurprising that the lemma is better behaved for homogeneous ideals.

     \begin{lemma}  \label{lem:abelian} 
     Suppose $G$ is an additive abelian group with subgroups $A\subseteq B \subseteq G$ and $C\subseteq G$. If $A+C=B+C$ and $A\cap C = B\cap C$ then $A=B$.
     \end{lemma}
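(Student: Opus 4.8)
This is a purely group-theoretic statement, and it is the abelian-group analogue of the containment-test used in the lemmas above. The plan is to show $B \subseteq A$ (the reverse containment is given). The key observation is that for any $b \in B$ we can use the hypothesis $A + C = B + C$ to ``correct'' $b$ by an element of $C$ so that it lands in $A$, and then the second hypothesis $A \cap C = B \cap C$ forces that correction to have been unnecessary modulo $A$.

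Concretely, take $b \in B$. Since $b \in B \subseteq B + C = A + C$, we may write $b = a + c$ for some $a \in A$ and $c \in C$. Then $c = b - a$. Now $b \in B$ and $a \in A \subseteq B$, so $c = b - a \in B$; also $c \in C$ by choice. Hence $c \in B \cap C = A \cap C \subseteq A$. Therefore $b = a + c \in A$, as both summands lie in $A$. Since $b \in B$ was arbitrary, $B \subseteq A$, and combined with $A \subseteq B$ this gives $A = B$.

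The argument is short enough that there is essentially no obstacle; the only thing to be careful about is using $A \subseteq B$ at the right moment (to conclude $c = b - a \in B$), which is exactly where the chain of subgroup inclusions $A \subseteq B$ is needed. No use of the full group $G$ beyond it being a common ambient group is required, and the additive/abelian hypothesis is used only to freely rearrange $b = a + c$ into $c = b - a$ and back.
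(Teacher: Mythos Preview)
Your proof is correct and follows essentially the same approach as the paper: pick $b \in B$, write $b = a + c$ with $a \in A$ and $c \in C$ using $B + C = A + C$, observe $c = b - a \in B \cap C = A \cap C \subseteq A$, and conclude $b \in A$. The paper's version is slightly terser but the argument is identical.
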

     
     \begin{proof}
   Let $b \in B$. 
   As $b \in B+C=A+C$ we have $b = a+c$ for some $a \in A$ and $c \in C$. 
   But then $c = b-a \in B\cap C = A\cap C$ so $c \in A$ whence $b\in A$.
     \end{proof}

\section{Transition systems}\label{ts-sect}

This section contains our main general results (Definition~\ref{ts-def} and Theorem~\ref{new-ts-thm}), concerning an inductive approach to computing initial ideals that we
refer to as a \defn{transition system}.

\subsection{Main definition}

Throughout, we let
$A$ be an affine space over an algebraically closed field $\KK$.
Let $\cN$ be a finite indexing set,
choose variables $x_i$  to identify $\KK[A] = \KK[x_i : i \in \cN]$ with a ring of polynomials, and 
then fix a term order on this ring.

For an ideal $I \subseteq \KK[A]$ and a nonzero element $0\neq f \in \KK[A]$ let 
\be (I \colon \langle f\rangle) = \tfrac{1}{f} (I \cap \langle f\rangle ) = \{ g \in \KK[A] : fg \in I\}.\ee
This \defn{ideal quotient} is an ideal  satisfying $I \subseteq (I \colon\langle f\rangle)$,
with equality if and only if $  I \cap \langle f\rangle =  fI$.
Since   $(I \colon\langle f\rangle) = \KK[A]$ if and only if $f \in I$, it follows that
if $I$ is maximal  with $f \notin I$ then $I=(I \colon\langle f\rangle)$.

\begin{definition}\label{ts-def}
Choose a set $\cT$ of pairs  $(I,J)$ where $I$ and $J$ are proper ideals of $\KK[A]$
such that:
\ben
\item[(T1)] $J$ is a monomial ideal contained in $ \init (I)$ with $J=\init(I)$ if $I$ is maximal.
\item[(T2)] If $(I,J),(I',J') \in \cT$ have $I=I'$ then $J=J'$.
\een
The set $\cT$
is a \defn{transition system} if for each $(I,J) \in \cT$
where $I$ is neither maximal nor equal to $J$, there exists a non-constant 
  $f \in \KK[A]$ with $f\notin I$ and a nonempty finite set $ \Phi \subseteq \cT$ such that:
\ben
\item[(T3)] It holds that $ \textstyle I + \langle f \rangle \subseteq \bigcap_{(P,Q) \in \Phi} P
$ and
$J + \langle \init(f) \rangle = \bigcap_{(P,Q) \in \Phi} Q$.

\item[(T4)] If $ I' := (I \colon \langle f\rangle) $ is not equal to $I$ and $J' := (J\colon \langle \init(f) \rangle)$ then $(I',J') \in \cT$. 

\een
\end{definition}

Here is a simple example of this construction.

\begin{example}
Suppose $A=\KK$ so that  $\KK[A] = \KK[x]$.
Choose elements $c_i \in \KK$ and 
define  \[ I_S=  \left\langle{\textstyle \prod_{i \in S}( x-c_i)}\right \rangle
\quand J_S = \langle x^{|S|} \rangle
 \quad\text{for each finite set $\varnothing \subsetneq S \subsetneq \PP$.}\]
We claim that 
$ \cT = \{ (I_S, J_S) : S \in \mathscr{S}\}$ is a transition system
for any subset-closed family $\mathscr{S}$ of nonempty finite subsets of $\PP$.
The ideal $I_S$ is maximal when $|S|=1$, and then $\init(I_{S}) = \langle x \rangle =J_{S}.$
If $|S|>1$ and $m \in S$ is any element, then conditions (T3) and (T4) in Definition~\ref{ts-def}
hold for \[f = x-c_m\quand \Phi  = \left\{ (I_{\{m\}},J_{\{m\}})\right\},\]
since  $(I_S \colon \langle x-c_{m} \rangle)=I_{S\setminus\{m\}}$ and $(J_S \colon \langle \init(x-c_{m} )\rangle) =(\langle x^{|S|} \rangle \colon \langle x\rangle) = \langle x^{|S|-1}\rangle= J_{S\setminus\{m\}}$.
\end{example}

Before discussing more interesting examples, let us present the main application of transition systems: they 
serve as an inductive tool for verifying initial ideal computations.

\begin{theorem}\label{new-ts-thm}
Suppose  $\cT $ is a transition system. Then
$\init( I) = J$ for all $(I,J) \in \cT$.
\end{theorem}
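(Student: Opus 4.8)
The plan is to prove $\init(I) = J$ for every $(I,J) \in \cT$ by a Noetherian-style induction. The natural quantity to induct on is $I$ itself, ordered by reverse inclusion: since $\KK[A]$ is Noetherian, there is no infinite strictly increasing chain of ideals, so we may argue that the set of ideals $I$ appearing in a pair of $\cT$ for which $\init(I)=J$ fails has a maximal element under inclusion, and derive a contradiction. (Condition (T2) guarantees that $J$ is determined by $I$, so it is legitimate to phrase everything in terms of $I$.) For the base cases, observe that if $I = J$ then trivially $\init(I) = \init(J) = J$ because $J$ is a monomial ideal, and if $I$ is maximal then (T1) directly gives $J = \init(I)$. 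So we may assume $I$ is neither maximal nor equal to $J$, and that $\init(P) = Q$ already holds for every $(P,Q) \in \cT$ with $P \supsetneq I$.

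Now invoke Definition~\ref{ts-def} to get a non-constant $f \notin I$ and a nonempty finite $\Phi \subseteq \cT$ satisfying (T3) and (T4). The first step is to compute $\init\bigl(I + \langle f\rangle\bigr)$. From (T3) we have $I + \langle f\rangle \subseteq P$ for each $(P,Q) \in \Phi$, and each such $P \supsetneq I$ (strictly, since $f \notin I$), so the inductive hypothesis applies: $\init(P) = Q$. Hence $\init(I + \langle f\rangle) \subseteq \bigcap_P \init(P) = \bigcap_P Q = J + \langle \init(f)\rangle$, using the second half of (T3) for the last equality. On the other hand, $J \subseteq \init(I)$ by (T1) and $\init(f) \in \init(I + \langle f\rangle)$, so the reverse containment $J + \langle \init(f)\rangle \subseteq \init(I + \langle f\rangle)$ holds as well, whence $\init(I + \langle f\rangle) = J + \langle \init(f)\rangle$. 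In particular, since $\langle \init(f)\rangle = \init(\langle f\rangle)$, this gives $\init(I + \langle f\rangle) = \init(I) + \init(\langle f\rangle)$ would follow if we also knew $\init(I) = J$ — but that is what we are trying to prove, so I must be more careful here and instead feed the identity $\init(I + \langle f\rangle) = J + \langle \init(f)\rangle$ into the intersection lemma below in combination with what (T4) tells us about the colon ideal.

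The second step handles the ideal quotient. Set $I' := (I : \langle f\rangle)$ and $J' := (J : \langle \init(f)\rangle)$. There are two cases. If $I' = I$, then by definition $I \cap \langle f\rangle = fI$, i.e. $\init$-behaviour of the intersection is as simple as possible; combined with $\init(I + \langle f\rangle) = J + \langle \init(f)\rangle$ I want to conclude $\init(I) = J$. Here is where Lemma~\ref{lem:abelian} (or the homogeneous Lemma~\ref{lem:hilbert}, if everything in sight is homogeneous — but in general (T1)–(T4) do not assume that, so I should use the abelian-group version) comes in: working inside the additive group $\KK[A]$, with $A = \init(I)$... no. Let me instead apply Lemma~\ref{lem:plus-vs-intersection} and a short dimension/Hilbert-series-free argument. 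Concretely: from $\init(I + \langle f\rangle) = \init(I) + \init(\langle f\rangle)$ — which I do get, because $J + \langle\init f\rangle \subseteq \init(I) + \langle \init f\rangle \subseteq \init(I+\langle f\rangle) = J + \langle \init f\rangle$ forces all three equal, hence $\init(I) + \langle \init f\rangle = \init(I + \langle f\rangle)$ too — Lemma~\ref{lem:plus-vs-intersection} yields $\init(I \cap \langle f\rangle) = \init(I) \cap \langle \init(f)\rangle$. If $I' = I$ this says $\init(fI) = \init(I)\cap\langle\init f\rangle$, and since $\init(fI) = \init(f)\init(I)$ we get $\init(f)\init(I) = \init(I) \cap \langle \init(f)\rangle$, which is exactly the monomial-ideal identity $(\init(I) : \langle \init f\rangle) = \init(I)$, i.e. $\init(f)$ is a non-zero-divisor mod $\init(I)$. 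If instead $I' \neq I$, then (T4) puts $(I', J') \in \cT$ with $I' \supsetneq I$, so the inductive hypothesis gives $\init(I') = J'$; that is, $\init\bigl(\tfrac1f(I \cap \langle f\rangle)\bigr) = \tfrac1f(J \cap \langle\init f\rangle)$, equivalently $\init(I)\cap\langle\init f\rangle = \init(I\cap\langle f\rangle) = \init(f)\cdot\init(I') = \init(f) J' = J \cap \langle \init f\rangle$. Either way we learn something about $\init(I) \cap \langle\init(f)\rangle$.

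The final step assembles these. We now know $\init(I) \supseteq J$ (by (T1)), $\init(I) + \langle\init f\rangle = J + \langle\init f\rangle$ (from step one), and $\init(I)\cap\langle\init f\rangle = J\cap\langle\init f\rangle$ (from step two — in the case $I'=I$ both sides equal $\init(f)\init(I) = \init(f)J$ once we feed back $\init I = J$... circular again, so in that case I argue differently: $\init(f)$ being a nonzerodivisor mod $\init(I)$ together with $\init(I)+\langle\init f\rangle = J+\langle\init f\rangle$ and $J \subseteq \init(I)$ forces $\init(I) = J$ by a direct element-chase, peeling off one power of $\init(f)$ at a time, terminating by the monomial-ideal structure). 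With the general-case identities in hand, apply Lemma~\ref{lem:abelian} with $G = \KK[A]$, $A = J$, $B = \init(I)$, $C = \langle\init(f)\rangle$: the hypotheses $A \subseteq B$, $A + C = B + C$, $A \cap C = B \cap C$ are exactly what we have, so the lemma gives $J = \init(I)$, contradicting the assumed failure. This completes the induction. I expect the main obstacle to be organizing the argument so as to avoid the circularity flagged above — specifically, being disciplined about which containments are unconditional (from (T1), (T3), and Proposition~\ref{prop:init-facts}) versus which require the inductive hypothesis, and correctly handling the degenerate branch $I' = I$ where there is no smaller pair to induct on and one must instead exploit that $\init(f)$ acts as a non-zero-divisor on the monomial ideal $J$.
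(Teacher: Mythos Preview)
Your argument is correct and follows essentially the same route as the paper: Noetherian induction on $I$, establishing the key identity $\init(I)+\langle\init f\rangle=J+\langle\init f\rangle=\init(I+\langle f\rangle)$ from (T3) and the inductive hypothesis, then splitting on whether $I'=(I:\langle f\rangle)$ equals $I$, and in the case $I'\neq I$ combining Lemma~\ref{lem:plus-vs-intersection} with induction to get $\init(I)\cap\langle\init f\rangle=J\cap\langle\init f\rangle$ and finishing via Lemma~\ref{lem:abelian}.

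The one point where you diverge is the case $I'=I$. You dismiss Lemma~\ref{lem:hilbert} on the grounds that (T1)--(T4) do not assume homogeneity, and fall back on an element-chase that peels off powers of $\init(f)$. This works, but it is unnecessary: in this case we are applying the lemma not to $I$ and $f$ but to the monomial ideals $J\subseteq\init(I)$ and the monomial $\init(f)$, all of which are automatically homogeneous regardless of what $I$ and $f$ look like. The paper simply invokes Lemma~\ref{lem:hilbert} at this point. Your element-chase is effectively an inline proof of the monomial-ideal special case of that lemma, so the two arguments are morally identical; the paper's version is just more direct and avoids the circularity worries that complicate your write-up.
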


\begin{proof}
If $I$ is maximal then  $J =\init(I)$ by assumption (T1), while if $I=J$ then $I$ is a monomial ideal so $J =I = \init(I)$.
 Suppose $I$ is not maximal and $I\neq J$. Assume 
 $f \notin I$ and $\varnothing\subsetneq\Phi\subseteq \cT$ satisfy conditions  (T3) and (T4) in Definition~\ref{ts-def}.
Then 
$
 I\neq I+ \langle f \rangle \subseteq \bigcap_{(P,Q) \in \Phi} P
 $
so  
\be\label{past-two-eq1}  \init (I) + \langle \init(f) \rangle  \subseteq  \init \( I+ \langle f \rangle\) 
\subseteq \textstyle \init \(\bigcap_{(P,Q) \in \Phi} P\)
  \subseteq\textstyle \bigcap_{(P,Q) \in \Phi} \init (P).
\ee
Since $\KK[A]$ is Noetherian and every   $(P,Q) \in \Phi$ has $I\subsetneq I+ \langle f \rangle \subseteq 
 P$, we may assume
by induction
that $\init(P) = Q$ for all $(P,Q) \in \Phi$. As we have $J \subseteq \init(I)$ it follows that
\be\label{past-two-eq2} 
\textstyle \bigcap_{(P,Q) \in \Phi} \init (P) = \textstyle \bigcap_{(P,Q) \in \Phi} Q=J + \langle \init(f) \rangle\subseteq \init(I)+ \langle \init(f)\rangle.
\ee
Thus, the containments in \eqref{past-two-eq1} and \eqref{past-two-eq2} must all be equalities, so
\be\label{must-be-eq}  \init (I) + \langle \init(f)\rangle =  \init \(I + \langle f \rangle\)= J + \langle \init(f) \rangle.\ee

It remains to prove that $J = \init   (I)$. 
First suppose $ (I\colon \langle f\rangle)= I$ so that $ I\cap \langle f\rangle=fI$. Then 
\be\label{uou-eq} \ba \init(f) \init (I)  = \init(f I) 
&= \init (I \cap  \langle f\rangle)  
 = \init (I) \cap  \langle \init(f) \rangle, \ea\ee
 using Lemma~\ref{lem:plus-vs-intersection}  and \eqref{must-be-eq} for the last equality.
Since $J$ and $\init  (I)$ are monomial ideals, and therefore homogeneous, 
and since $\init(f)$ is a non-constant homogenous polynomial,
we can use Lemma~\ref{lem:hilbert} to
deduce that $J = \init (I)$. The hypotheses required by the lemma are \eqref{must-be-eq} and \eqref{uou-eq}.

Alternatively, let $I' = (I\colon \langle f \rangle)$ and $J ' = (J\colon \langle \init(f) \rangle)$ and suppose that  $I'\neq I$.
Then we have $(I',J') \in \cT$ by hypothesis 
and $I \subsetneq I'$, so we may assume by induction that $\init(I') =J'$.
Therefore
\[ \ba
\init (I) \cap \langle \init(f) \rangle 
& = \init (I \cap \langle f\rangle) &&\text{by Lemma~\ref{lem:plus-vs-intersection}  using \eqref{must-be-eq}}
\\& = \init (f I') &&\text{since $ I \cap \langle f \rangle=fI'$ by definition}
\\& = \init(f)  \init (I') 
\\ & = \init(f) J' &&\text{by induction}
\\& = J \cap \langle \init(f)\rangle&&\text{by hypothesis.}
\ea
\]
Given this equation and \eqref{must-be-eq}, we deduce that
 $J = \init   (I)$ from Lemma~\ref{lem:abelian}.  \end{proof}
 
The proof just given demonstrates that when $\cT$ is a transition system, stronger versions of properties (T3) and (T4) in Definition~\ref{ts-def} hold.
To be precise:

 \begin{corollary}\label{strongerT-cor}
Suppose $(I,J)$ belongs to a transition system $\cT $. Assume  $f \notin I$ is a non-constant polynomial and $ \Phi\subseteq \cT$ is a nonempty finite set  satisfying (T3) and (T4) in Definition~\ref{ts-def}.
Then:
\ben
\item[(a)] It holds that $ I + \langle f \rangle = \bigcap_{(P,Q) \in \Phi} P$ so the containment in (T3) is equality.

\item[(b)] 
If
$(I\colon \langle f \rangle) = I$ then $(J \colon \langle \init(f) \rangle) = J$ so the containment in (T4) is unconditional.

\een
 \end{corollary}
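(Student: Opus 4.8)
The plan is to extract both claims directly from the equalities established in the proof of Theorem~\ref{new-ts-thm}, using the facts proven earlier in the excerpt together with Proposition~\ref{prop:init-facts}(b).

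For part (a), first I would recall that the proof of Theorem~\ref{new-ts-thm} already derives equation~\eqref{must-be-eq}, namely $\init(I) + \langle \init(f)\rangle = \init(I + \langle f\rangle) = J + \langle\init(f)\rangle$, and that by Theorem~\ref{new-ts-thm} itself we now know $\init(I) = J$ and $\init(P) = Q$ for every $(P,Q)\in\Phi$. Since (T3) gives the containment $I + \langle f\rangle \subseteq \bigcap_{(P,Q)\in\Phi} P$, applying Proposition~\ref{prop:init-facts}(b) reduces the problem to checking that the initial ideals agree: $\init(I + \langle f\rangle) = \init\bigl(\bigcap_{(P,Q)\in\Phi} P\bigr)$. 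The left side equals $J + \langle\init(f)\rangle$ by \eqref{must-be-eq}. For the right side, I would run the chain $\init\bigl(\bigcap P\bigr) \subseteq \bigcap \init(P) = \bigcap Q = J + \langle\init(f)\rangle$, where the first inclusion is Proposition~\ref{prop:init-facts}(a), the middle equality uses $\init(P) = Q$, and the last equality is the second half of (T3). Combining with \eqref{past-two-eq1}, which shows $J + \langle\init(f)\rangle = \init(I) + \langle\init(f)\rangle \subseteq \init(I + \langle f\rangle) \subseteq \init\bigl(\bigcap P\bigr)$, forces all these containments to be equalities, so $\init(I+\langle f\rangle) = \init\bigl(\bigcap P\bigr)$ and hence $I + \langle f\rangle = \bigcap_{(P,Q)\in\Phi} P$ by Proposition~\ref{prop:init-facts}(b).

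For part (b), suppose $(I\colon\langle f\rangle) = I$, equivalently $I\cap\langle f\rangle = fI$. This is exactly the first case treated in the proof of Theorem~\ref{new-ts-thm}, which via \eqref{uou-eq} establishes $\init(f)\init(I) = \init(I)\cap\langle\init(f)\rangle$. Now I would observe that $(J\colon\langle\init(f)\rangle) = \frac{1}{\init(f)}\bigl(J\cap\langle\init(f)\rangle\bigr)$ by definition, and since $J = \init(I)$ by Theorem~\ref{new-ts-thm}, this equals $\frac{1}{\init(f)}\bigl(\init(I)\cap\langle\init(f)\rangle\bigr) = \frac{1}{\init(f)}\bigl(\init(f)\init(I)\bigr) = \init(I) = J$. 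Thus $J\cap\langle\init(f)\rangle = \init(f)J$, i.e. the containment in (T4) is an equality with no hypothesis on $I'$ needed.

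I do not expect a genuine obstacle here: this corollary is essentially a bookkeeping consequence of the equalities already forced inside the proof of Theorem~\ref{new-ts-thm}, and the only subtlety is making sure to invoke the already-proven conclusion $\init(I) = J$ (and $\init(P) = Q$) rather than just the a priori containments $J\subseteq\init(I)$. The mildly delicate point is part (a)'s use of Proposition~\ref{prop:init-facts}(b): one must verify the containment $I + \langle f\rangle \subseteq \bigcap P$ (given by (T3)) before concluding equality from equality of initial ideals, but this is immediate.
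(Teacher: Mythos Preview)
Your proposal is correct and follows essentially the same route as the paper's proof: for (a) you use that all containments in \eqref{past-two-eq1}--\eqref{past-two-eq2} collapse to equalities (now invoking the conclusion $\init(P)=Q$ of Theorem~\ref{new-ts-thm} rather than induction), then apply Proposition~\ref{prop:init-facts}(b); for (b) you read off $(J:\langle\init(f)\rangle)=J$ directly from \eqref{uou-eq} together with $J=\init(I)$. The paper's argument is the same, just stated more tersely.
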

 
 \begin{proof}
Since the containments in \eqref{past-two-eq1} are all equalities,
the two sides of the identity in part (a) have the same initial ideal, so are equal by Proposition~\ref{prop:init-facts}(b).
For part (b), 
note that  if $(I\colon \langle f \rangle) = I$,  
then
  \eqref{uou-eq} implies  
 that $(J \colon \langle \init(f)\rangle) = (\init(I)  \colon \langle \init(f) \rangle) =  \init(I) =J$.
\end{proof}

Another immediate corollary of Theorem~\ref{new-ts-thm} is the following.

\begin{corollary}
For a given set $\sI$ of ideals in $\KK[A]$, there exists at most one transition system $\cT$ with $\sI = \{ I : (I,J) \in \cT\}$.
\end{corollary}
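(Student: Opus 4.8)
The plan is to show that the data of a transition system $\cT$ is completely determined by the set $\sI = \{I : (I,J)\in\cT\}$. By condition (T2), the monomial ideal $J$ attached to a given $I$ is unique, so $\cT$ is determined by $\sI$ as soon as we know which pairs $(I,J)$ occur---equivalently, as soon as we know that the second coordinate $J$ is forced by $I$ alone. The key observation is that Theorem~\ref{new-ts-thm} gives $J = \init(I)$ for every $(I,J)\in\cT$. So the second coordinate is not merely consistent (T2) but is pinned down absolutely: it must equal $\init(I)$, a quantity depending only on $I$ (and the fixed ambient ring and term order, which are part of the given setup).

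Concretely, I would argue as follows. Suppose $\cT$ and $\cT'$ are both transition systems with $\{I : (I,J)\in\cT\} = \sI = \{I : (I,J)\in\cT'\}$. Take any $(I,J)\in\cT$. Then $I\in\sI$, so there is some $(I,J')\in\cT'$. Applying Theorem~\ref{new-ts-thm} to $\cT$ gives $J=\init(I)$, and applying it to $\cT'$ gives $J'=\init(I)$; hence $J=J'$ and $(I,J)\in\cT'$. This shows $\cT\subseteq\cT'$, and by symmetry $\cT'\subseteq\cT$, so $\cT=\cT'$.

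This is essentially immediate once Theorem~\ref{new-ts-thm} is in hand, so there is no real obstacle; the only thing to be careful about is that Theorem~\ref{new-ts-thm} genuinely applies to an arbitrary transition system with no extra hypotheses, which is exactly its statement. One might also remark that this is why condition (T2) is a natural minimal compatibility requirement: a priori it only forbids two different $J$'s for the same $I$ within one system, but the theorem upgrades this to the statement that $J$ is globally determined as $\init(I)$.

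\begin{proof}
Suppose $\cT$ and $\cT'$ are transition systems with $\{ I : (I,J) \in \cT\} = \sI = \{ I : (I,J) \in \cT'\}$. Let $(I,J) \in \cT$. Since $I \in \sI$, there exists $J'$ with $(I,J') \in \cT'$. By Theorem~\ref{new-ts-thm} applied to $\cT$ we have $J = \init(I)$, and by Theorem~\ref{new-ts-thm} applied to $\cT'$ we have $J' = \init(I)$. Hence $J = J'$, so $(I,J) \in \cT'$. This proves $\cT \subseteq \cT'$, and the reverse inclusion follows by the same argument with the roles of $\cT$ and $\cT'$ exchanged. Therefore $\cT = \cT'$.
\end{proof}
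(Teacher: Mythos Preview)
Your proof is correct and is exactly the argument the paper has in mind: the corollary is stated immediately after Theorem~\ref{new-ts-thm} as an ``immediate corollary'' with no further proof, and your deduction that $J=\init(I)$ forces the second coordinate is precisely the intended one-line justification.
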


We present one other corollary that relates transition systems to Gr\"obner bases.

\begin{corollary}\label{ts-gr-cor}
Suppose $\cT$ is a transition system and for each $(I,J) \in \cT$ we choose a set $G_I \subseteq I$ with $J \subseteq\langle \init(g) : g \in G_I \rangle$.
Then $G_I$ is a Gr\"obner basis for $I$ for each $(I,J) \in \cT$.
\end{corollary}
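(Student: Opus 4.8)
The plan is to combine Theorem~\ref{new-ts-thm} with Corollary~\ref{cor:init-facts}. Specifically, I would first invoke Theorem~\ref{new-ts-thm} to conclude that $\init(I) = J$ for every pair $(I,J) \in \cT$. This is the substantive input; everything else is a short deduction.

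Having fixed $(I,J)\in\cT$ and a set $G_I\subseteq I$ with $J\subseteq \langle \init(g) : g\in G_I\rangle$, I would simply chain the containments
\[
\init(I) = J \subseteq \langle \init(g) : g\in G_I\rangle,
\]
where the equality is Theorem~\ref{new-ts-thm} and the inclusion is the hypothesis on $G_I$. This exhibits $G_I$ as a subset of $I$ satisfying the hypothesis $\init(I)\subseteq \langle \init(g) : g\in G_I\rangle$ of Corollary~\ref{cor:init-facts}, so that corollary immediately yields that $G_I$ is a Gr\"obner basis for $I$.

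There is essentially no obstacle here: the result is a direct corollary, and the only thing to be careful about is that Corollary~\ref{cor:init-facts} requires $G_I$ to be a subset of the ideal $I$ (so that the reverse containment $\langle \init(g):g\in G_I\rangle\subseteq \init(I)$ holds automatically), which is part of the stated hypothesis. One could also note in passing that, since a Gr\"obner basis is in particular a generating set, this shows each such $G_I$ generates $I$; but that is already contained in the conclusion of Corollary~\ref{cor:init-facts} and need not be belabored.
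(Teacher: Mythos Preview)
Your proposal is correct and follows exactly the paper's approach: invoke Theorem~\ref{new-ts-thm} to get $\init(I)=J$, then apply Corollary~\ref{cor:init-facts} using the hypothesis $J\subseteq\langle\init(g):g\in G_I\rangle$. The paper's proof is the same argument stated in a single sentence.
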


\begin{proof}
If $(I,J) \in \cT$
then $\init(I)=J$ by Theorem~\ref{new-ts-thm}  so
$G_I$ is a Gr\"obner basis by Corollary~\ref{cor:init-facts}.
\end{proof}

\subsection{Examples}

Below are some other concrete instances of transition systems.
These examples will be considered in greater generality in Sections~\ref{schumat-sect1}, \ref{schumat-sect2}, and \ref{schumat-sect3}.
The cases here are small enough that one can work out all of the relevant details by hand.

In these examples, $A$ will be some affine space of matrices over $\KK$.
Given $w \in A$, we let \[\MX_w^A=\{ M \in A : \rank M_{[i][j]} \leq \rank w_{[i][j]}\text{ for all }i,j\},\]
where $M_{[i][j]}$ means the upper left $i\times j$ submatrix of $M$. 
This \defn{matrix Schubert subvariety} only depends on the \defn{rank table} given by the $\NN$-valued matrix $\left[  \rank w_{[i][j]}\right]$.

\begin{example}\label{ss-ts-ex} Our first example, while particularly simple, helps illustrate the general setup.
Suppose $A$ is the variety of skew-symmetric  $3\times 3 $ matrices  \[A = \left\{ \begin{bsmallmatrix} 0 & -a & -b \\ a &0 & -c \\ b & c& 0 \end{bsmallmatrix} : a,b,c \in \KK\right\}.\]
Then we may identify  $\KK[A]= \KK[u_{21}, u_{31}, u_{32}]$ where $u_{ij}(M) = M_{ij}=-M_{ji}$.
No skew-symmetric $3\times 3$ matrix is invertible, so the only possible rank tables for elements of $A$ are
\[
 \begin{bsmallmatrix} 0 & 0 & 0 \\ 0 &0 & 0 \\ 0 & 0& 0 \end{bsmallmatrix},
\quad
 \begin{bsmallmatrix} 0 & 0 & 0 \\ 0 &0 & 1 \\ 0 & 1& 2 \end{bsmallmatrix},
\quad
\begin{bsmallmatrix} 0 & 0 & 1 \\ 0 &0 & 1 \\ 1 & 1& 2 \end{bsmallmatrix},
\quand
\begin{bsmallmatrix} 0 & 1 & 1 \\ 1 &2 & 2 \\ 1 & 2& 2 \end{bsmallmatrix}.\]
These rank tables arise from various elements, but in particular from the monomial matrices
\[
w_1 =  \begin{bsmallmatrix} 0 & 0 & 0 \\ 0 &0 & 0 \\ 0 & 0& 0 \end{bsmallmatrix},
\quad
w_2 =  \begin{bsmallmatrix} 0 & 0 & 0 \\ 0 &0 & -1 \\ 0 & 1& 0 \end{bsmallmatrix},
\quad
w_3 =  \begin{bsmallmatrix} 0 & 0 & -1 \\ 0 &0 & 0 \\ 1 & 0& 0 \end{bsmallmatrix},
\quand
w_4 =  \begin{bsmallmatrix} 0 & -1 & 0 \\ 1 &0 & 0 \\ 0 & 0& 0 \end{bsmallmatrix}.
\]
Define $X_k = \MX^A_{w_k}\subseteq A$ and $I_k = I(X_k)\subseteq \KK[A]$.

We wish to pair each $I_k$ with a monomial ideal $J_k$ to form a transition system.
This is easily done, as $I_k$ is already a monomial ideal.
 To see this, note that we can  informally write 
\[
X_1 = \left\{   \begin{bsmallmatrix} 0 &  \\ 0 & 0 \\ 0 &0 & 0  \end{bsmallmatrix}\right\},
\quad
X_2 = \left\{   \begin{bsmallmatrix}  0 &  \\ 0 & 0 \\ 0 & \ast & 0  \end{bsmallmatrix}\right\},
\quad
X_3 = \left\{   \begin{bsmallmatrix}  0 &  \\ 0 & 0 \\ \ast & \ast & 0 \end{bsmallmatrix}\right\},
\quand
X_4 = \left\{   \begin{bsmallmatrix} 0 &  \\ \ast & 0 \\ \ast &\ast & 0  \end{bsmallmatrix}\right\},
\]
so $I_1 = \langle u_{21}, u_{31}, u_{32}\rangle \supsetneq I _2 = \langle u_{21}, u_{31}\rangle \supsetneq I_3 = \langle u_{21} \rangle \supsetneq I_4 = 0$.
Then $\cT = \{ (I_k,I_k) : k=1,2,3,4\}$ is transition system relative to any term order on $\KK[A]$
since all pairs $(I,J) \in \cT$ have $I=J$.
\end{example}

So far we have only seen transition systems $\cT$ for which we can take the set $\Phi$ in  Definition~\ref{ts-def} to be a singleton for all $(I,J) \in \cT$.
The next example presents a case where this is not possible.

\begin{example}\label{22-ex}
Let $A = \left\{ \begin{bsmallmatrix} a & b \\ c & d \end{bsmallmatrix} : a,b,c,d \in \KK\right\}$ 
so   $\KK[A]= \KK[x_{11}, x_{12}, x_{21}, x_{22}]$ where $x_{ij}(M) = M_{ij}$.
The possible rank tables for elements of $A$ arise from the partial permutation matrices
\[
w_1 =  \begin{bsmallmatrix} 0 & 0 \\ 0 & 0 \end{bsmallmatrix},
\quad
w_2 =  \begin{bsmallmatrix} 0 & 0 \\ 0 & 1 \end{bsmallmatrix},
\quad
w_3 =  \begin{bsmallmatrix} 0 & 0 \\ 1 & 0 \end{bsmallmatrix},
\quad
w_4 =  \begin{bsmallmatrix} 0 & 1 \\ 0 & 0 \end{bsmallmatrix},
\quad
w_5 =  \begin{bsmallmatrix} 0 & 1 \\ 1 & 0 \end{bsmallmatrix},
\quad
w_6 =  \begin{bsmallmatrix} 1 & 0 \\ 0 & 1 \end{bsmallmatrix},
\quad
w_7 =  \begin{bsmallmatrix} 1 & 0 \\ 0 & 0 \end{bsmallmatrix}.
\]
Define $X_k = \MX_{w_k}^A\subseteq A$ and $I_k = I(X_k)\subseteq \KK[A]$.
Choose any term order on $\KK[A]$. Then let 
\[J_k = \init(I_k)\quand \cT=\{(I_k,J_k) : k=1,2,3,4,5,6,7\}.\] We now investigate whether  $\cT$ is a transition system.
As one can informally express  
\[\ba
X_1 &= \left\{   \begin{bsmallmatrix} 0 & 0  \\ 0 & 0 \end{bsmallmatrix}\right\},
\quad
X_2 = \left\{   \begin{bsmallmatrix} 0 & 0 \\ 0 & \ast \end{bsmallmatrix}\right\},
\quad
X_3 = \left\{    \begin{bsmallmatrix} 0 & 0 \\ \ast & \ast \end{bsmallmatrix} \right\},
\quad
X_4 = \left\{   \begin{bsmallmatrix} 0 & \ast \\ 0 & \ast \end{bsmallmatrix}\right\},
\\
X_5 &= \left\{   \begin{bsmallmatrix} 0 & \ast \\ \ast & \ast \end{bsmallmatrix}\right\},
\quad
X_6 = \left\{   \begin{bsmallmatrix} \ast & \ast \\ \ast & \ast \end{bsmallmatrix}\right\},
\quad 
X_7 = \left\{   \det = 0\right\},
\ea
\]
it follows that
\[ \ba 
&I_1 = \langle x_{11}, x_{12}, x_{21}, x_{22}\rangle,  &&I_2 = \langle x_{11}, x_{12}, x_{21}\rangle,\quad 
&& I_3 = \langle x_{11}, x_{12} \rangle, 
&&I_4 = \langle x_{11}, x_{21} \rangle, \\
 &I_5 = \langle x_{11} \rangle && I_6 = 0,
&&  I_7 = \langle x_{11}x_{22}-x_{12}x_{21}\rangle.
\ea
\] Therefore 
$J_k = \init(J_k)= I_k$ for $k\in[6]$ while $ J_7 =\init(I_7)$ is either $ \langle x_{12}x_{21}\rangle$ or $\langle x_{11}x_{22}\rangle$,
depending on the choice of term order.

We only need to verify conditions (T3) and (T4) in Definition~\ref{ts-def}
when $(I,J) = (I_7,J_7)$.
First suppose $J_7 = \langle x_{12}x_{21}\rangle$. 
Then   
  \[ I_7 + \langle x_{11}\rangle =  J_7 + \langle x_{11}\rangle = \langle x_{11},x_{12}x_{21}\rangle = I_3 \cap I_4= J_3 \cap J_4,\]
  so $f=x_{11}$ and $\Phi = \{(I_3,J_3),(I_4,J_4)\}$ satisfy condition (T3) in Definition~\ref{ts-def}. For these choices, condition (T4)  holds vacuously,
  and we conclude that $\cT$ is a transition system.

For  term orders with $J_7= \langle x_{11}x_{22}\rangle$, the set $\cT$ is not a transition system,
but it can be extended to one by adding the ideal pair $(I_8,J_8)$ with  $I_8  =J_8=\langle x_{12}, x_{22}\rangle$.
Then, when $(I,J) = (I_7,J_7)$, conditions (T3) and (T4) are satisfied by taking 
$f=x_{12}$ and $\Phi = \{(I_3,J_3),(I_8,J_8)\}$.
The ideal $I_8$ is radical,
but it corresponds to a closed subvariety of $A$  not of the form $\MX_w^A$.
 \end{example}

The next example shows a case where we cannot form a transition system including $I(\MX_w^A)$ for all $w \in A$ without adding  non-radical ideals.

\begin{example}\label{22sym-ex}
Let  $A = \left\{ \begin{bsmallmatrix} a& b \\ b & c \end{bsmallmatrix} : a,b,c \in \KK\right\}$ 
by the variety of symmetric $2\times 2$ matrices over $\KK$
so that   $\KK[A]= \KK[u_{11}, u_{21}, u_{22}]$ with $u_{ij}(M) = M_{ij} = M_{ji}$.  
We will just consider $\KK[A]$ under the reverse lexicographic term order. 
The possible rank tables for elements of $A$   arise from the matrices
\[
w_1 =  \begin{bsmallmatrix} 1 & 0 \\ 0 & 0 \end{bsmallmatrix},
\quad
w_2 =  \begin{bsmallmatrix} 1 & 0 \\ 0 & 1 \end{bsmallmatrix},
\quad
w_3 = \begin{bsmallmatrix} 0 & 1 \\ 1 & 0 \end{bsmallmatrix},
\quad
w_4 =  \begin{bsmallmatrix} 0 & 0 \\ 0 & 1 \end{bsmallmatrix},
\quad
w_5 =   \begin{bsmallmatrix} 0 & 0 \\ 0 & 0 \end{bsmallmatrix}.
\]
Let $X_k = \MX_{w_k}^A\subseteq A$ and $I_k = I(X_k)\subseteq \KK[A]$. We can informally write the subvarieties $X_k$ as 
\[
X_1 = \left\{   \det=0\right\},
\quad
X_2 = \left\{   \begin{bsmallmatrix} \ast &  \\ \ast & \ast \end{bsmallmatrix}\right\},
\quad
X_3 = \left\{   \begin{bsmallmatrix} 0 &  \\ \ast & \ast \end{bsmallmatrix}\right\},
\quad
X_4 = \left\{   \begin{bsmallmatrix} 0 &  \\ 0 & \ast \end{bsmallmatrix}\right\},
\quad
X_5 = \left\{   \begin{bsmallmatrix} 0  &  \\ 0 & 0 \end{bsmallmatrix}  \right\},
\]
and so $I_1 = \langle  u_{21}^2 - u_{11} u_{22}\rangle$ and   $I_2 =0\subsetneq I_3 =\langle u_{11}\rangle \subsetneq I_4 = \langle u_{11}, u_{21}\rangle\subsetneq I_5 = \langle u_{11}, u_{21}, u_{22} \rangle$.
Define 
\[ J_1 =\init(I_1)= \langle u_{21}^2\rangle \quand J_k =\init(I_k)= I_k\text{ for $k\in\{2,3,4,5\}$}.\]
The set  $\{ (I_k,J_k) : k =1,2,3,4,5\}$ fails to be a transition system, though only by a small margin.

We only need to verify conditions (T3) and (T4) in Definition~\ref{ts-def}
when $(I,J) = (I_1,J_1)$ since $I_k=J_k$ when $k>1$.
The problem is that 
$\textstyle J_1 + \langle \init(f)\rangle $ is never equal to  $\bigcap_{k \in S} J_k$ 
for any choice of set  $\varnothing\subsetneq S\subseteq \{1,2,3,4,5\}$ when $f \in \KK[A]$ is non-constant with $f\notin I_1$.
To get around this, define
\[
I_6 =J_6= \langle u_{11},u_{21}^2\rangle =  I_1 + \langle u_{11} \rangle = J_1 + \langle u_{11}\rangle.\]
Now if $(I,J) = (I_1,J_1)$ then (T3) holds for $f=u_{11} \notin I_1$ and $\Phi = \{(I_6,J_6)\}$, and $(I\colon \langle f\rangle)=I$ so (T4) holds vacuously.
Thus the set
$\cT=\{ (I_k,J_k) : k \in [6]\}$ is a transition system.
Notice, however, that the extra ideal $I_6$ is not radical, so corresponds to a non-reduced subscheme of $A$.
\end{example}

\subsection{Orbit closures}\label{orbit-sect}

We are most interested in transition systems $\cT$ made up of pairs $(I,J)$
where $I$ ranges over the \defn{orbit closures}  of certain maximal ideals. This section studies the relevant orbit closure operation. It is convenient to present these results in an entirely self-contained way, but generalizations of this material in terms of equivariant sheaves may be well-known; see, e.g., \cite{ChGi,MillerSturmfels}. If the reader is willing to believe that obvious properties of orbit closures of subvarieties extend naturally from radical ideals to all ideals, the details of this section can be skipped.

Continue to let $A$ be an affine space over an algebraically closed field $\KK$.
Suppose $G$ is a connected linear algebraic group over $\KK$ that acts algebraically on $A$.
We identify the coordinate ring $\KK[G\times A] $ with  $\KK[G]\otimes \KK[A]$ where $\otimes$ denotes the tensor product over $\KK$.
The action of $G$ on $A$ determines a map
\be\label{pullback-eq}
\barr{rrcl} \alpha : &G\times A &  \to&  A \\ & (g,a) & \mapsto & g\cdot a
\earr
\quad\text{whose pullback is }\quad
\barr{rrcl}
 \alpha^* : & \KK[A] &\to& \KK[G\times A]
 \\
 & f & \mapsto & f\circ \alpha.
 \earr
 \ee
Since the action of $G$ on $A$ is algebraic, $\alpha$ is a morphism of algebraic varieties and
 $\alpha^*$ is a ring homomorphism. In addition, $\alpha^\ast$ is surjective
as $\alpha^*(f)$ sends $(1,a) \mapsto f(a)$ for all $a \in A$.

When $I \subseteq \KK[A]$ is an ideal, we denote the ideal of  $\KK[G\times A]$ generated by the set $\{ 1\otimes f : f \in I\}$ as
$\KK[G] \otimes I$.
Since $\KK[G]$ is a free and flat $\KK$-module, the following holds:

\begin{lemma} \label{flat-lem}
If $I$ is any ideal in $\KK[A]$ then $\KK[G\times A] / (\KK[G]\otimes I) \cong \KK[G] \otimes (\KK[A]/I) $.
\end{lemma}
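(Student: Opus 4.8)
The plan is to deduce this from flatness of $\KK[G]$ over $\KK$, via the exactness of flat base change applied to the defining short exact sequence of $I$. First I would record the short exact sequence of $\KK$-vector spaces
\[
0 \longrightarrow I \longrightarrow \KK[A] \xrightarrow{\pi} \KK[A]/I \longrightarrow 0
\]
and apply the functor $\KK[G]\otimes_\KK(-)$. Since $\KK[G]$ is free, hence flat, as a $\KK$-module, the result is again exact:
\[
0 \longrightarrow \KK[G]\otimes_\KK I \longrightarrow \KK[G]\otimes_\KK\KK[A] \xrightarrow{\id_{\KK[G]}\otimes\pi} \KK[G]\otimes_\KK(\KK[A]/I) \longrightarrow 0 .
\]
In particular the natural map $\KK[G]\otimes_\KK I \to \KK[G]\otimes_\KK\KK[A]$ is injective, and $\id_{\KK[G]}\otimes\pi$ is a surjective ring homomorphism (surjectivity of $\pi$ passes through the tensor product).

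Next I would identify the image of this injection with the ideal $\KK[G]\otimes I$ of $\KK[G\times A]=\KK[G]\otimes_\KK\KK[A]$, that is, the ideal generated by $\{1\otimes f : f\in I\}$. The image is the $\KK$-span of the elements $g\otimes f$ with $g\in\KK[G]$ and $f\in I$; this subspace contains every $1\otimes f$, is closed under multiplication by arbitrary $g\otimes h$ because $(g\otimes h)(g'\otimes f)=gg'\otimes hf$ with $hf\in I$, and conversely is contained in any ideal containing all $1\otimes f$. Hence it equals $\KK[G]\otimes I$, and the exact sequence above becomes
\[
0 \longrightarrow \KK[G]\otimes I \longrightarrow \KK[G\times A] \xrightarrow{\id_{\KK[G]}\otimes\pi} \KK[G]\otimes(\KK[A]/I) \longrightarrow 0 .
\]
By the first isomorphism theorem, $\id_{\KK[G]}\otimes\pi$ then induces the claimed ring isomorphism $\KK[G\times A]/(\KK[G]\otimes I)\cong\KK[G]\otimes(\KK[A]/I)$.

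The only delicate point — the ``main obstacle,'' such as it is — is the identification in the second paragraph: one must check that the $\KK$-linear image of $\KK[G]\otimes_\KK I$ really is the ideal $\KK[G]\otimes I$ in the sense of the paper and is the kernel of a ring homomorphism, not merely a vector subspace. This is immediate once one uses that $I$ is an ideal of $\KK[A]$ and that all maps in sight are $\KK$-algebra homomorphisms; everything else is the formal exactness of tensoring by a flat (here free) module.
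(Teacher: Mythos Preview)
Your proof is correct and matches the paper's approach exactly: the paper does not give a detailed argument but simply asserts the lemma follows because $\KK[G]$ is a free and flat $\KK$-module, which is precisely the flat base change argument you have written out.
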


Now, the  \defn{orbit closure} of an ideal $I\subseteq\KK[A]$, relative to the action of $G$,  is defined to be  \be\cl_G(I) = (\alpha^*)^{-1}(\KK[G] \otimes I) \subseteq \KK[A].\ee
Because $\alpha^*$ is a ring homomorphism,
this preimage is an ideal in $\KK[A]$.

\begin{example}
If $I=0$ then $\cl_G(0) = 0$ and if $I = \KK[A]$ then $\cl_G(I)= \KK[A]$.
\end{example}

For any ideal $I \subseteq \KK[A]$  define 
$ V(I) := \{ a \in A : f(a)=0\text{ for all }f \in I\}$
and for any subset $V\subseteq A$  define $ I(V) := \{ f \in \KK[A] : f(v)=0\text{ for all } v \in V\}.$
Because $\KK$ is algebraically closed,
Hilbert's Nullstellensatz asserts that
 if $I$ is any ideal then $V(I) $
is a closed affine subvariety and $I(V(I)) = \sqrt{I}$ is equal to the \defn{radical} of $I$.

The ideal $\cl_G(I)$ is related to the closure of the union of the $G$-orbits through all points in $V(I)$.
 
\begin{proposition}\label{rad-prop}
If $I$ is a radical ideal in $\KK[A]$ then so is $ \cl_G(I)$,
and in this case $V(\cl_G(I))$ is the closure of the set $\{ g \cdot a : (g,a) \in G \times V(I)\}$ in the Zariski topology of $ A$.
\end{proposition}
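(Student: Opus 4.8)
The plan is to prove the two assertions separately, starting with radicality and then identifying the variety.

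\textbf{Step 1: $\cl_G(I)$ is radical.} Here I would use Lemma~\ref{flat-lem}. Since $I$ is radical, $\KK[A]/I$ is a reduced ring; and because $\KK[G]$ is a reduced $\KK$-algebra for $G$ a connected linear algebraic group over the algebraically closed (hence perfect) field $\KK$, the tensor product $\KK[G]\otimes(\KK[A]/I)$ is again reduced — this is the standard fact that the tensor product of reduced algebras over a perfect field is reduced. By Lemma~\ref{flat-lem} this ring is isomorphic to $\KK[G\times A]/(\KK[G]\otimes I)$, so $\KK[G]\otimes I$ is a radical ideal of $\KK[G\times A]$. Finally, the preimage of a radical ideal under the ring homomorphism $\alpha^*$ is radical (if $f^n \in (\alpha^*)^{-1}(P)$ then $\alpha^*(f)^n \in P$, so $\alpha^*(f)\in P$, so $f \in (\alpha^*)^{-1}(P)$), giving that $\cl_G(I)$ is radical.

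\textbf{Step 2: identifying $V(\cl_G(I))$.} Let $Z := \{g\cdot a : (g,a)\in G\times V(I)\} = \alpha(G\times V(I))$, and let $\overline{Z}$ be its Zariski closure in $A$. Since $I$ is radical, $V(I) = V(\sqrt I)$ is the reduced subscheme $\Spec(\KK[A]/I)$, and $G\times V(I) = V(\KK[G]\otimes I)$ by Lemma~\ref{flat-lem}. The map $\alpha$ restricted to $G\times V(I)$ has image $Z$, so on the level of coordinate rings the closure $\overline{Z}$ corresponds to the kernel of the composite $\KK[A]\xrightarrow{\alpha^*}\KK[G\times A]\twoheadrightarrow \KK[G\times A]/(\KK[G]\otimes I)$; equivalently $I(\overline Z) = (\alpha^*)^{-1}(\KK[G]\otimes I) = \cl_G(I)$, where I use that $\KK[G]\otimes I$ is radical from Step~1 so that its preimage computes the vanishing ideal of the closure of the image, not just some ideal with that radical. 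Then by the Nullstellensatz $V(\cl_G(I)) = V(I(\overline Z)) = \overline Z$, since $\overline Z$ is closed. Concretely: $f \in \cl_G(I)$ iff $\alpha^*(f) \in \KK[G]\otimes I = I(G\times V(I))$ iff $f\circ\alpha$ vanishes on $G\times V(I)$ iff $f$ vanishes on $Z$ iff $f$ vanishes on $\overline Z$, and this last set of $f$ is exactly $I(\overline Z)$, whose vanishing locus is $\overline Z$.

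\textbf{Main obstacle.} The delicate point is the claim in Step~1 that $\KK[G]\otimes_\KK(\KK[A]/I)$ is reduced. This uses that $\KK$ is perfect (true since it is algebraically closed) together with geometric reducedness of one of the two factors — here $\KK[G]$, the coordinate ring of a variety over an algebraically closed field, is geometrically reduced, so tensoring it with any reduced $\KK$-algebra preserves reducedness. I would either cite this as a standard commutative-algebra fact or, since the paper aims to be self-contained, give the short argument that a domain tensored with a geometrically reduced algebra over a field is reduced and reduce the general case to $\KK[A]/I$ being a finite product of domains after decomposing $V(I)$ into irreducible components. Everything else — the preimage-of-radical-is-radical observation and the Nullstellensatz translation between ideals and closed sets — is routine. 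I should also note I am implicitly using that $\alpha^*$ is injective is \emph{not} needed; only that $\alpha^*$ is a well-defined ring homomorphism, which is given.
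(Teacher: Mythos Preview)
Your proposal is correct and follows essentially the same approach as the paper: both proofs establish radicality via Lemma~\ref{flat-lem} and the standard fact that tensor products of reduced algebras over a perfect field are reduced (the paper cites Bourbaki for this), and both identify $V(\cl_G(I))$ by unwinding the equivalence $\alpha^*(f)\in \KK[G]\otimes I \Leftrightarrow f$ vanishes on $\alpha(G\times V(I))$. The only cosmetic difference is that the paper presents Step~2 as two separate containments while you package it as a single iff chain; the underlying logic is identical.
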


\begin{proof}
An ideal $I$ in a commutative ring $R$ is radical if and only if the quotient ring $R/I$ 
is \defn{reduced} in the sense of having no nonzero nilpotent elements.
Since a linear algebraic group is an affine variety, the coordinate ring $\KK[G]$ is an integral domain and therefore reduced.
On the other hand, 
by Lemma~\ref{flat-lem} 
we know that 
$\KK[G\times A] / (\KK[G]\otimes I)\cong  \KK[G] \otimes (\KK[A]/I) $.  
Tensor products of reduced algebras over perfect fields are always reduced 
\cite[Thm.~3, Chapter V, \S15]{Bourbaki}, and our field $\KK$ is perfect since it is algebraically closed.
Thus if $I$ is radical then both $\KK[A]/I$ and $ \KK[G] \otimes (\KK[A]/I)$ are reduced,
so $\KK[G]\otimes I$ is radical, as is its preimage $ \cl_G(I)$ under the ring homomorphism $\alpha^*$.
 
Now let $\cO = \{ g \cdot a : (g,a) \in G \times V(I)\}\subseteq A$. 
If $f \in  \cl_G(I)$ then $\alpha^*(f) \in \KK[G] \otimes I $, so for all $g \in G$ and $a \in V(I)$ we have 
$f(g\cdot a) =  \alpha^*(f)(g,a) = 0$. This shows that $ \cl_G(I) \subseteq V(\cO)$ so $\cO \subseteq V(\cl_G(I))$.
Since $V( \cl_G(I))$ is a closed affine subvariety, it also contains the closure of $\cO$.

Conversely, if $J\subseteq \KK[A]$ is any radical ideal with $\cO \subseteq V(J)$,
then each $f \in J$
has
\[
\alpha^*(f) \in I(G\times V(I)) = V(G) \otimes \KK[A] + \KK[G] \otimes I(V(I)) 
=  \KK[G]\otimes I.
\]
Thus $J \subseteq \cl_G(I)$ and $V(\cl_G(I)) \subseteq V(J)$,
so the closure of $ \cO$, which is the intersection of all such sets $V(J)$, also contains $V(\cl_G(I))$.
\end{proof}

The \defn{reduction} of a commutative ring $R$ is the quotient $R' := R/\sqrt{0}$
by its nilradical. 
An ideal $I$ in  $R$ is \defn{primary} if  
all zero divisors in  $R/I$ are nilpotent, or equivalently if $(R/I)'$ is an integral domain.
The closure operation $\cl_G$   preserves this property.

\begin{proposition}
If $I$ is a primary ideal in $ \KK[A]$ then so is $\cl_G(I)$.
\end{proposition}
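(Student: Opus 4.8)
\emph{(Proof proposal.)} The plan is to mirror the proof of Proposition~\ref{rad-prop}, replacing ``radical / reduced / integral domain'' everywhere by ``primary / every zero divisor nilpotent,'' and using two standard facts in place of the ingredients used there. First, the contraction $\phi^{-1}(Q)$ of a primary ideal $Q\subsetneq T$ along a ring homomorphism $\phi\colon S\to T$ is again primary: it is proper because $\phi(1_S)=1_T\notin Q$, and if $ab\in\phi^{-1}(Q)$ then $\phi(a)\in Q$ or $\phi(b)^n\in Q$, so $a\in\phi^{-1}(Q)$ or $b^n\in\phi^{-1}(Q)$. Second, over an algebraically closed (in particular perfect) field $\KK$, the tensor product of two finitely generated $\KK$-algebras that are integral domains is again an integral domain: it is reduced by the Bourbaki fact recalled in the proof of Proposition~\ref{rad-prop}, and its spectrum is irreducible because a product of irreducible affine varieties over an algebraically closed field is irreducible.

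Granting these, the argument is short. Since $I$ is primary it is proper, so $\KK[A]/I\ne 0$ and hence, by Lemma~\ref{flat-lem}, the ring $\KK[G\times A]/(\KK[G]\otimes I)\cong\KK[G]\otimes(\KK[A]/I)$ is nonzero; thus $\KK[G]\otimes I$ is a proper ideal. Because $\cl_G(I)=(\alpha^*)^{-1}(\KK[G]\otimes I)$, the first fact reduces us to proving that $\KK[G]\otimes I$ is primary, i.e., that every zero divisor of $\KK[G]\otimes C$ is nilpotent, where $C:=\KK[A]/I$; equivalently, that the reduction $(\KK[G]\otimes C)'$ is an integral domain. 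Now $I$ primary means every zero divisor of $C$ is nilpotent, which forces the reduction $C'=C/\sqrt{0}$ to be an integral domain --- indeed, if $\bar a\,\bar b=0$ in $C'$ with $\bar a\ne 0$, then $a^mb^m=(ab)^m=0$ for some $m$ while $a^m$ is not nilpotent, so $b^m=0$. Since $\KK[G]$ is free, hence flat, over $\KK$, tensoring $0\to\sqrt{0}\to C\to C'\to 0$ with $\KK[G]$ identifies the kernel of $\KK[G]\otimes C\twoheadrightarrow\KK[G]\otimes C'$ with $\KK[G]\otimes\sqrt{0}$, all of whose elements are nilpotent; and $\KK[G]\otimes C'$ is reduced since $\KK[G]$ and $C'$ are reduced and $\KK$ is perfect. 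Hence the nilradical of $\KK[G]\otimes C$ is exactly $\KK[G]\otimes\sqrt{0}$, so $(\KK[G]\otimes C)'\cong\KK[G]\otimes C'$. Finally $\KK[G]$ is a domain (as $G$ is connected, hence an irreducible variety) and $C'$ is a domain, both finitely generated over $\KK$, so the second fact shows $\KK[G]\otimes C'$, and therefore $(\KK[G]\otimes C)'$, is a domain. Thus $\KK[G]\otimes I$ is primary, and hence so is $\cl_G(I)$.

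The one genuine obstacle is the second fact above: one must remember that reducedness of $\KK[G]\otimes C'$ (which relies on $\KK$ being perfect) is by itself not enough, and that the connectedness of $G$ is used precisely to make $\KK[G]$ an integral domain, so that $\Spec(\KK[G]\otimes C')$ is irreducible. The remaining steps are routine bookkeeping parallel to the proof of Proposition~\ref{rad-prop}.
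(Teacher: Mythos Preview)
Your proof is correct and follows essentially the same route as the paper's: reduce to showing $\KK[G]\otimes I$ is primary (since contractions of primary ideals are primary), then show that the reduction of $\KK[G]\otimes(\KK[A]/I)$ is an integral domain by combining the facts that $\KK[G]$ is a domain (as $G$ is connected), that $(\KK[A]/I)'$ is a domain (as $I$ is primary), and that a tensor product of finitely generated integral domains over an algebraically closed field is again a domain. The only cosmetic difference is that the paper packages the nilradical computation into the general isomorphism $(M\otimes N)'\cong(M'\otimes N')'$ for commutative $\KK$-algebras, whereas you derive the needed special case $(\KK[G]\otimes C)'\cong\KK[G]\otimes C'$ directly from flatness and the Bourbaki reducedness fact; the content is the same.
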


\begin{proof}
If $M$ and $N$ are rings that are commutative $\KK$-algebras, then $(M\otimes N)' \cong (M'\otimes N')'$.
Hence, for any ideal $I\subseteq \KK[A]$,
 Lemma~\ref{flat-lem} implies that
 the reduction of $\KK[G\times A] / (\KK[G]\otimes I)$ 
is isomorphic to the reduction of 
$ \KK[G]' \otimes (\KK[A]/I)'$.

Since $G$ is connected, it holds that $\KK[G]'= \KK[G]$ is an integral domain, and 
if $I$ is primary then $(\KK[A]/I)'$ is an integral domain.
In this case $\KK[G]' \otimes (\KK[A]/I)'$ is a tensor product of integral domains over an algebraically closed field,
 so is itself an integral domain that is equal to its reduction.
  Thus if $I$ is primary then so is $\KK[G]\otimes I$, as is the preimage $ \cl_G(I)$.
\end{proof}

We note some more general properties of the orbit closure operation $\cl_G$.

\begin{lemma} \label{lem:subset-closure}
    If $I \subseteq \KK[A]$ is any ideal then $ \cl_G(I)\subseteq I$.
\end{lemma}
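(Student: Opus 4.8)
The plan is to exhibit a one-sided inverse to $\alpha^*$ coming from restriction to the identity section, and then simply push the defining inclusion of $\cl_G(I)$ through it.

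First I would introduce the ring homomorphism $\epsilon \colon \KK[G\times A] \to \KK[A]$ given by restricting a function on $G\times A$ to the closed subvariety $\{1\}\times A \cong A$; concretely $\epsilon(h)(a) = h(1,a)$. This is a ring homomorphism since it is the pullback along the morphism $a \mapsto (1,a)$. The key observation, already implicit in the remark right after \eqref{pullback-eq} that $\alpha^*(f)$ sends $(1,a)\mapsto f(a)$, is that $\epsilon\circ\alpha^* = \id_{\KK[A]}$.

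Next I would check that $\epsilon(\KK[G]\otimes I) \subseteq I$. Recall $\KK[G]\otimes I$ denotes the ideal of $\KK[G\times A]$ generated by $\{1\otimes f : f \in I\}$, so a typical element has the form $\sum_k h_k\cdot(1\otimes f_k)$ with $h_k \in \KK[G\times A]$ and $f_k \in I$. Since $\epsilon$ is a ring homomorphism and $\epsilon(1\otimes f_k) = f_k$, we get $\epsilon\bigl(\sum_k h_k(1\otimes f_k)\bigr) = \sum_k \epsilon(h_k)\,f_k \in I$, because each $\epsilon(h_k)\in\KK[A]$ and each $f_k\in I$. This step — tracking the generated ideal rather than the naive ``elementwise tensor'' picture — is the only place where a little care is needed, but it is routine.

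Finally, I would conclude: if $f \in \cl_G(I) = (\alpha^*)^{-1}(\KK[G]\otimes I)$, then $\alpha^*(f)\in\KK[G]\otimes I$, so $f = \epsilon(\alpha^*(f)) \in \epsilon(\KK[G]\otimes I) \subseteq I$. Hence $\cl_G(I)\subseteq I$, as claimed. I do not anticipate any genuine obstacle here; the content is just that $\alpha^*$ is a split injection with splitting given by evaluation at $1\in G$.
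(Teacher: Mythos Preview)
Your proposal is correct and essentially identical to the paper's own proof: the paper introduces the map $\iota \colon A \to G\times A$, $a \mapsto (1,a)$, notes that $\iota^*\circ\alpha^* = \id_{\KK[A]}$ (exactly your $\epsilon\circ\alpha^* = \id$), checks that $\iota^*(\KK[G]\otimes I) \subseteq I$, and concludes. The only cosmetic difference is that you verify $\epsilon(\KK[G]\otimes I)\subseteq I$ by writing elements as ideal combinations $\sum_k h_k(1\otimes f_k)$, whereas the paper uses the simple-tensor description $\iota^*(f_1\otimes f_2) = f_1(1)f_2$; both are valid and yield the same argument.
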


\begin{proof}
Let $\iota : A \to G \times A$   be the map   $a \mapsto (1,a)$.
Then  
$\iota^*(f_1 \otimes f_2)=  (f_1\otimes f_2)\circ \iota= f_1(1)f_2$ when $f_1 \in \KK[G]$ and $f_2 \in \KK[A],$
so  $\iota^*$ is a surjection $\KK[G]\otimes I \to I$. Hence, if
 $f \in \cl_G(I)$ then  
 $\iota^* \circ \alpha^*(f) \in \iota^*(\KK[G] \otimes I) =  I.$ But $\iota^* \circ \alpha^* = (\alpha \circ \iota)^* = \id_{\KK[A]}$,
 so if $f \in \cl_G(I)$ then $f \in I$. 
 \end{proof}
 
 \begin{lemma} \label{lem:idem-closure}
    If $I \subseteq \KK[A]$ is any ideal then $\cl_G(\cl_G (I)) = \cl_G(I)$.
\end{lemma}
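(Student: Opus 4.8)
The plan is to show $\cl_G(\cl_G(I)) = \cl_G(I)$ by proving both containments. One direction is immediate: applying Lemma~\ref{lem:subset-closure} with the ideal $\cl_G(I)$ in place of $I$ gives $\cl_G(\cl_G(I)) \subseteq \cl_G(I)$ for free. So the real work is the reverse containment $\cl_G(I) \subseteq \cl_G(\cl_G(I))$.

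To set this up, I would first unwind the definitions. By definition $\cl_G(\cl_G(I)) = (\alpha^*)^{-1}\bigl(\KK[G] \otimes \cl_G(I)\bigr)$, so I must show that if $f \in \cl_G(I)$, then $\alpha^*(f) \in \KK[G] \otimes \cl_G(I)$. The key idea is an associativity property of the group action. Let $\mu : G \times G \to G$ be the multiplication map and consider the two morphisms $G \times G \times A \to A$ given by $(g,h,a) \mapsto (gh)\cdot a$ and $(g,h,a) \mapsto g \cdot (h \cdot a)$; these agree because the action is a group action. On coordinate rings this says $(\id_{\KK[G]} \otimes \alpha^*) \circ \alpha^* = (\mu^* \otimes \id_{\KK[A]}) \circ \alpha^*$ as maps $\KK[A] \to \KK[G] \otimes \KK[G] \otimes \KK[A]$. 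Now take $f \in \cl_G(I)$, so $\alpha^*(f) \in \KK[G] \otimes I$. Applying $\id_{\KK[G]} \otimes \alpha^*$ to this element lands in $\KK[G] \otimes \alpha^*(I) \subseteq \KK[G] \otimes (\KK[G] \otimes I)$ (since $\alpha^*(I) \subseteq \KK[G]\otimes I$ by definition of $\cl_G$ — actually more directly, $\alpha^*$ maps $I$ into $\KK[G]\otimes I$ because $f\in I$ implies $\alpha^*(f)\in \KK[G]\otimes I$ only when $f\in\cl_G(I)$; I should instead note that $(\id\otimes\alpha^*)(\KK[G]\otimes I) = \KK[G]\otimes\alpha^*(I)$, and reconcile this with what I need). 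Let me be more careful: I want to conclude $\alpha^*(f) \in \KK[G]\otimes\cl_G(I)$, i.e., that each "slice" of $\alpha^*(f)$ over $G$ lies in $\cl_G(I)$, which means its image under $\alpha^*$ lies in $\KK[G]\otimes I$. The associativity identity says precisely that $(\id_{\KK[G]}\otimes\alpha^*)(\alpha^*(f)) = (\mu^*\otimes\id_{\KK[A]})(\alpha^*(f))$, and the right side lies in $\mu^*(\KK[G])\otimes I \subseteq (\KK[G]\otimes\KK[G])\otimes I$, which witnesses that applying $\alpha^*$ to each $G$-slice of $\alpha^*(f)$ keeps it in $\KK[G]\otimes I$; hence each slice is in $\cl_G(I)$, so $\alpha^*(f) \in \KK[G]\otimes\cl_G(I)$, i.e. $f\in\cl_G(\cl_G(I))$.

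Making the "slice" argument precise is the step I expect to require the most care, since it involves chasing elements through the tensor product $\KK[G]\otimes\KK[G]\otimes\KK[A]$ and using flatness/freeness of $\KK[G]$ (Lemma~\ref{flat-lem}) to identify $\KK[G]\otimes(\KK[G]\otimes I)$ as the set of elements whose every $\KK[G]$-coefficient lies in $\KK[G]\otimes I$. Concretely: write $\alpha^*(f) = \sum_k p_k \otimes a_k$ with $p_k \in \KK[G]$ linearly independent and $a_k \in \KK[A]$; then $\alpha^*(f) \in \KK[G]\otimes\cl_G(I)$ iff each $a_k \in \cl_G(I)$ iff each $\alpha^*(a_k) \in \KK[G]\otimes I$. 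Applying $\id_{\KK[G]}\otimes\alpha^*$ to $\alpha^*(f)$ gives $\sum_k p_k \otimes \alpha^*(a_k)$, and by the associativity identity this equals $(\mu^*\otimes\id)(\alpha^*(f)) = \sum_j m_j \otimes b_j$ where, expanding $\alpha^*(f)=\sum_j q_j\otimes b_j$ with $b_j\in I$ and $q_j\in\KK[G]$, we have $m_j = \mu^*(q_j)\in\KK[G]\otimes\KK[G]$ and $b_j\in I$. So $\sum_k p_k\otimes\alpha^*(a_k)$, viewed in $\KK[G]\otimes(\KK[G]\otimes\KK[A])$, actually lies in $\KK[G]\otimes(\KK[G]\otimes I)$. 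Since the $p_k$ are linearly independent, each $\alpha^*(a_k) \in \KK[G]\otimes I$, which is what we wanted. This completes the proof. Throughout I would lean on Lemma~\ref{flat-lem} and the elementary fact that for a free module $F$ and submodule $N\subseteq M$, an element of $F\otimes M$ lies in $F\otimes N$ iff all its coefficients with respect to a basis of $F$ lie in $N$.
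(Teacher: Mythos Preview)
Your argument is correct, and it takes a genuinely different route from the paper's proof. You obtain one containment from Lemma~\ref{lem:subset-closure} and establish the reverse by exploiting the coassociativity identity $(\id_{\KK[G]}\otimes\alpha^*)\circ\alpha^* = (\mu^*\otimes\id_{\KK[A]})\circ\alpha^*$ coming from the group action, then extract the conclusion via the linear independence of a chosen set of $\KK[G]$-coefficients. The paper instead works directly with the projection $\pi:G\times A\to A$ and the inclusion $\iota:A\to G\times A$, $a\mapsto(1,a)$: it proves the pointwise identity $\pi^*((\alpha^*)^{-1}(f))=\{\pi^*\circ\iota^*(f)\}$ and uses this to compute $\cl_G(\cl_G(I))=(\alpha^*)^{-1}\langle\pi^*\circ\iota^*(\KK[G]\otimes I)\rangle=(\alpha^*)^{-1}(\KK[G]\otimes I)=\cl_G(I)$ in one stroke. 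Your approach is the more conceptual one (it is essentially the statement that $\cl_G(I)$ is a subcomodule for the $\KK[G]$-coaction, hence fixed by $\cl_G$), and it generalizes cleanly to any Hopf-algebra coaction; the paper's argument is more hands-on and avoids invoking associativity of the action, instead leaning on evaluation at the identity. One stylistic note: the self-corrections in your second paragraph obscure the logic, and in a final write-up you should excise them and go straight to the clean argument in your third paragraph.
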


\begin{proof}
Let $\pi : G \times A \to A$ be the projection map onto the second factor $(g,a) \mapsto a$.
Then $\pi^*(f) = 1 \otimes f$  so $\pi^* \circ \iota^*(f_1\otimes f_2) = f_1(1)\otimes f_2 \in \KK[G] \otimes \KK[A] $ for $f_1 \in \KK[G]$ and $f_2 \in \KK[A].$
Thus  $\pi^* \circ \iota^*$ evaluates the first factor of an element of $\KK[G] \otimes \KK[A]$ at $1 \in G$,
     so this composition restricts for any ideal $I\subseteq \KK[A]$ to a surjective $\KK$-linear map $\KK[G] \otimes I \to \KK\otimes I$. 
    
    Since $\cl_G( I) $ is defined to be $(\alpha^*)^{-1} \langle \pi^*(I) \rangle$ where $\langle S \rangle$ is the ideal generated by $S$, we have 
$
        \cl_G (\cl_G (I)) = (\alpha^*)^{-1}\left \langle \pi^*((\alpha^*)^{-1}(\KK[G] \otimes I))\right \rangle.
$
    To simplify this expression, we now prove that \[\pi^*((\alpha^*)^{-1}(f))=\{\pi^*\circ \iota^*(f)\}\quad\text{for any $f \in \KK[G] \otimes \KK[A]$.}\] Indeed, suppose $F \in \pi^*((\alpha^*)^{-1}(f))$. Then $F = \pi^*(p) = 1 \otimes p$ for some $p \in \KK[A]$ with $\alpha^*(p) = f$. This means that if $g \in G$ and $a \in A$
   then $p(ga) = f(g,a)$, so 
\[
        F(g,a) = (1\otimes p)(g,a) = p(a) = f(1, a) = f\circ \iota \circ \pi(g,a).
\] %
This shows that $F = \pi^*\circ  \iota^*(f)$. Using our claim, we   deduce that 
\[
         \cl_G (\cl_G (I)) = (\alpha^*)^{-1}\left\langle \pi^*\circ \iota^*(\KK[G] \otimes I )\right\rangle = (\alpha^*)^{-1}(\langle \KK \otimes I \rangle)
         =
          (\alpha^*)^{-1}( \KK[G] \otimes I ) = \cl_G (I).
\]
\end{proof}

We define an ideal $I\subseteq \KK[A]$ to be \defn{$G$-stable} if $\cl_G(I) = I$. The previous lemma implies that
 $\cl_G(I)$ is $G$-stable and contained in $I$.
It turns out that $\cl_G(I)$ is the largest ideal of this type.

\begin{proposition} If $I \subseteq \KK[A]$ is any ideal then $\cl_G(I)$ is the sum of all $G$-stable ideals in $ I$.
\end{proposition}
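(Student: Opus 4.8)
The plan is to show two containments. Write $S$ for the sum of all $G$-stable ideals contained in $I$. First, since $\cl_G(I)$ is itself a $G$-stable ideal contained in $I$ by Lemmas~\ref{lem:subset-closure} and \ref{lem:idem-closure}, it is one of the ideals being summed, so $\cl_G(I) \subseteq S$. For the reverse containment, the key observation is that $\cl_G$ is monotone: if $I_1 \subseteq I_2$ then $\KK[G]\otimes I_1 \subseteq \KK[G]\otimes I_2$, hence $\cl_G(I_1) = (\alpha^*)^{-1}(\KK[G]\otimes I_1) \subseteq (\alpha^*)^{-1}(\KK[G]\otimes I_2) = \cl_G(I_2)$. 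Now take any $G$-stable ideal $K \subseteq I$. Applying monotonicity to $K \subseteq I$ and then using $G$-stability gives $K = \cl_G(K) \subseteq \cl_G(I)$. Since every summand of $S$ is contained in $\cl_G(I)$, we get $S \subseteq \cl_G(I)$, and the two containments together give $S = \cl_G(I)$.

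I should also note at the outset that $S$ is genuinely an ideal (so the statement makes sense): a sum of ideals is an ideal, and the collection of $G$-stable ideals in $I$ is nonempty since it contains $\cl_G(I)$ (or even just the zero ideal). The only slightly delicate point worth stating carefully is that $\cl_G(I)$ is $G$-stable, i.e. $\cl_G(\cl_G(I)) = \cl_G(I)$, which is exactly Lemma~\ref{lem:idem-closure}, and that it is contained in $I$, which is Lemma~\ref{lem:subset-closure}. With those two lemmas in hand the argument is essentially formal.

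I do not expect any real obstacle here; the proof is short. The one place to be careful is the monotonicity step, since it is the only ingredient not already recorded as a named lemma in the excerpt — but it follows immediately from the definition $\cl_G(I) = (\alpha^*)^{-1}(\KK[G]\otimes I)$ together with the fact that tensoring with the flat module $\KK[G]$ preserves inclusions of ideals (indeed $\KK[G]\otimes I_1$ is literally the ideal generated by $\{1\otimes f : f \in I_1\}$, a subset of the generators of $\KK[G]\otimes I_2$). So the write-up will be: (i) observe $S$ is an ideal; (ii) prove $\cl_G$ is monotone; (iii) deduce $K = \cl_G(K)\subseteq \cl_G(I)$ for every $G$-stable $K\subseteq I$, hence $S\subseteq \cl_G(I)$; (iv) observe $\cl_G(I)$ is itself such a $K$ by Lemmas~\ref{lem:subset-closure} and~\ref{lem:idem-closure}, hence $\cl_G(I)\subseteq S$; conclude equality.
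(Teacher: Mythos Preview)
Your proof is correct and follows essentially the same approach as the paper: both show $\cl_G(I)\subseteq S$ using Lemmas~\ref{lem:subset-closure} and~\ref{lem:idem-closure}, and both show $S\subseteq \cl_G(I)$ by invoking monotonicity of $\cl_G$ and $G$-stability of each summand. The paper's version is slightly terser (it asserts monotonicity as ``clear'' rather than spelling it out), but the logical structure is the same.
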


\begin{proof}
    Let $\Sigma $ be the sum of all $G$-stable ideals contained in $I$. Since $\cl_G (I)$ is $G$-stable and contained in $I$ by Lemmas~\ref{lem:subset-closure} and \ref{lem:idem-closure}, we have $\cl_G(I) \subseteq \Sigma $. Conversely, if $J \subseteq I$ then it is clear that  $\cl_G(J) \subseteq \cl_G (I)$, so if $J$ is also $G$-stable then  $J = \cl_G (J) \subseteq \cl_G (I)$. Hence $\Sigma  \subseteq \cl_G (I)$.
\end{proof}

The group $G$
acts on $\KK[A]$ by the formula $g\cdot f : a \mapsto f(g^{-1} \cdot a)$
for $g \in G$, $f \in \KK[A]$, and $a \in A$.
Our notion of $G$-stability can alternatively be characterized in terms of this action.
 
\begin{proposition} An ideal $I \subseteq \KK[A]$ is $G$-stable if and only if $g\cdot I = I$ for all $g \in G$.
\end{proposition}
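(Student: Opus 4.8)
The plan is to prove both directions by unwinding the definition $\cl_G(I) = (\alpha^*)^{-1}(\KK[G]\otimes I)$ and relating it to the pullback action of $G$ on $\KK[A]$. The key point is that $\cl_G(I)=I$ is equivalent to the condition that $\alpha^*(I)\subseteq \KK[G]\otimes I$, since the reverse containment $I\subseteq\cl_G(I)$ fails in general but $\cl_G(I)\subseteq I$ always holds by Lemma~\ref{lem:subset-closure}. So the whole statement reduces to showing: $\alpha^*(I)\subseteq\KK[G]\otimes I$ if and only if $g\cdot I=I$ for all $g\in G$.

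For the forward direction, suppose $\cl_G(I)=I$, so $\alpha^*(f)\in\KK[G]\otimes I$ for every $f\in I$. Fix $g\in G$ and let $\epsilon_g:\KK[G]\to\KK$ be evaluation at $g$, a $\KK$-algebra homomorphism; then $\epsilon_g\otimes\id:\KK[G]\otimes\KK[A]\to\KK[A]$ carries $\KK[G]\otimes I$ into $I$. Unwinding definitions, $(\epsilon_g\otimes\id)(\alpha^*(f))$ is the function $a\mapsto f(g\cdot a)$, which is precisely $g^{-1}\cdot f$ in the paper's convention. Hence $g^{-1}\cdot f\in I$ for all $f\in I$ and all $g\in G$, which gives $g\cdot I\subseteq I$ for all $g$; replacing $g$ by $g^{-1}$ yields the reverse inclusion, so $g\cdot I=I$.

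For the converse, suppose $g\cdot I=I$ for all $g\in G$; I want $\alpha^*(f)\in\KK[G]\otimes I$ for each $f\in I$. Write $\alpha^*(f)=\sum_k p_k\otimes q_k$ with $p_k\in\KK[G]$ and $q_k\in\KK[A]$, choosing the $p_k$ to be linearly independent over $\KK$. For each $g\in G$, applying $\epsilon_g\otimes\id$ shows $\sum_k p_k(g)\,q_k = g^{-1}\cdot f\in I$ by hypothesis. Since the $p_k$ are linearly independent functions on $G$, one can find points $g_1,\dots,g_m\in G$ so that the matrix $[p_k(g_l)]$ is invertible; solving the resulting linear system expresses each $q_k$ as a $\KK$-linear combination of the elements $g_l^{-1}\cdot f\in I$, so $q_k\in I$ and therefore $\alpha^*(f)\in\KK[G]\otimes I$. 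Thus $I\subseteq\cl_G(I)$, and combined with Lemma~\ref{lem:subset-closure} we get $\cl_G(I)=I$.

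The main obstacle is the converse direction: one must argue that the "coefficient functions" $q_k$ appearing in $\alpha^*(f)$ each lie in $I$, given only that every specialization $\sum_k p_k(g)q_k$ lies in $I$. The clean way is the linear-independence/Vandermonde-type argument sketched above (pick enough evaluation points to recover the $q_k$ by linear algebra over $\KK$); one should be slightly careful that such points exist, which follows because linearly independent regular functions on a variety cannot all vanish on any finite-codimension condition—concretely, if the $p_k$ are $\KK$-linearly independent then the evaluation vectors $(p_1(g),\dots,p_m(g))$ as $g$ ranges over $G$ span $\KK^m$. Everything else is routine bookkeeping with the pullback maps $\alpha^*$, $\iota^*$, $\pi^*$ already set up in the section.
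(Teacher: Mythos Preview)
Your proof is correct and follows essentially the same approach as the paper. The converse direction is identical (write $\alpha^*(f)=\sum p_k\otimes q_k$ with the $p_k$ linearly independent, evaluate at enough points to make the coefficient matrix invertible, conclude each $q_k\in I$); for the forward direction the paper instead observes that $\alpha^*(g\cdot f)=\sum (g\cdot p_i)\otimes q_i$ stays in $\KK[G]\otimes I$, whereas you evaluate at $g$ via $\epsilon_g\otimes\id$, but both are one-line verifications.
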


\begin{proof}
Fix $g \in G$ and suppose $f \in \cl_G(I)$. Then we can write $\alpha^*(f) = \sum_i p_{i} \otimes q_{i}$ as a finite sum
with each $p_{i} \in \KK[G]$ and $q_{i} \in I$. The group $G$ also acts on $\KK[G]$, and since 
\[ \alpha^*(g\cdot f) (x,y) = f(g^{-1}x\cdot y) = \alpha^*(f)(g^{-1}x,y)\quad\text{for }x \in G\text{ and }y \in A,
\]
we have 
$ \textstyle \alpha^*(g\cdot f) = \sum_i (g\cdot p_{i}) \otimes q_{i} \in \KK[G]\otimes I.$
Thus $g\cdot f\in\cl_G(I)$, so if $I =\cl_G(I)$ then $g\cdot I \subseteq I$.
As $G$ is a group, having $g\cdot I \subseteq I$ for all $g \in G$ implies that $g\cdot I = I$ for all $g \in G$.

Now suppose $I$ is an ideal with $g\cdot I = I $ for all $g \in G$. 
In view of Lemma~\ref{lem:subset-closure} it suffices to show that $I \subseteq \cl_G(I)$.
Again let $f \in \cl_G(I)$.
We must check that $\alpha^*(f) \in \KK[G]\otimes I$.
To this end, notice that we can write $\alpha^*(f) = \sum_{i=1}^m p_i \otimes q_i$ where $p_1,p_2,\dots,p_m \in \KK[G]$
are linearly independent over $\KK$ and each $q_i \in \KK[A]$.
For $g\in G$ let $\iota_g : A \to G\times A$ denote the map $a\mapsto (g,a)$. Then
 \[\textstyle \iota_g^*\circ \alpha^*(f) = \alpha^*(f) \circ \iota_g= \sum_{i=1}^m p_i(g) q_i \in \KK[A].\]
At the same time, for each $g \in G$ we have 
$\iota_g^*\circ \alpha^*(f) = f\circ \alpha\circ \iota_g = g^{-1}\cdot f$ which is in $I$ by hypothesis.

Finally, observe that as $p_1,p_2,\dots,p_m$ are linearly independent as functions on $G$, we may choose group elements
$g_1,g_2,\dots,g_m \in G$ such that the matrix $\left[ p_i(g_j) \right]_{1\leq i,j \leq m}$ is invertible.
Then the fact that we have $\sum_{i=1}^m p_i(g_j) q_j = \iota_g^*\circ \alpha^*(f) \in I$ for all $j$ implies that the functions
$q_1,q_2,\dots,q_m$ are all in $I$, and so  $\alpha^*(f)=\sum_{i=1}^m p_i \otimes q_i \in \KK[G]\otimes I$ as needed.
\end{proof}

\subsection{Reductions}

Transition systems that arise from orbit closures of maximal ideals 
 are simpler than the general case in two respects.
First, we can usually choose $f \notin I$ in Definition~\ref{ts-def} to be one of the variables $x_i$ generating $\KK[A]$.
Second, condition (T4) in Definition~\ref{ts-def} will often hold by default.

We prove two technical lemmas in this section in order to efficiently make these reductions later. 
Continue to let $G$ be a linear algebraic group that acts algebraically on an affine space $A$,
with $G$ and $A$ both defined over any algebraically closed field $\KK$.
Recall that we identify $\KK[A]$ with the polynomial ring $\KK[x_i : i \in \cN]$ where $\cN$
is some finite indexing set.

\begin{lemma}\label{tlem1}
Suppose there exists a partial order $\preceq$ on $\cN$ 
such that if $j \in \cN$ then
\[\textstyle \alpha^\ast(x_j) = \sum_{i\preceq j} f_{ij} \otimes x_i \in\KK[G\times A] \quad\text{for elements $f_{ij} \in \KK[G]$
with $f_{jj}$ invertible}.\]
Let  $I$ be an ideal of $ \KK[A]$ and suppose $j \in \cN$ has the property that 
$x_i \in I$ for all $i \prec j$. Then:
\ben
\item[(a)] We have $x_j \notin \cl_G(I)$ if and only if $x_j \notin I$.
\item[(b)] It holds that 
$
(\cl_{G}(I) \colon \langle x_j\rangle)
=
\cl_{G}( (I \colon \langle x_j\rangle) )
.$
\een
\end{lemma}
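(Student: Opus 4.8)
The plan is to work entirely inside $\KK[G\times A]$ using the pullback $\alpha^*$ and the key hypothesis that $\alpha^*(x_j) = f_{jj}\otimes x_j + \sum_{i\prec j} f_{ij}\otimes x_i$ with $f_{jj}$ invertible. Since $x_i \in I$ for all $i\prec j$, every summand $f_{ij}\otimes x_i$ with $i\prec j$ lies in $\KK[G]\otimes I$, so modulo $\KK[G]\otimes I$ we have the congruence
\[
\alpha^*(x_j) \equiv f_{jj}\otimes x_j \pmod{\KK[G]\otimes I},
\]
and $f_{jj}\otimes 1$ is a unit in $\KK[G\times A]$ (its inverse is $f_{jj}^{-1}\otimes 1 \in \KK[G]\otimes\KK[A]$). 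This single observation drives both parts. For part (a): by Lemma~\ref{lem:subset-closure} we have $\cl_G(I)\subseteq I$, so $x_j\notin I$ implies $x_j\notin\cl_G(I)$ trivially. Conversely, if $x_j\in I$ then $1\otimes x_j\in\KK[G]\otimes I$, hence $f_{jj}\otimes x_j\in\KK[G]\otimes I$, hence by the congruence $\alpha^*(x_j)\in\KK[G]\otimes I$, i.e. $x_j\in\cl_G(I)$; contrapositive gives the claim.

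For part (b), I would prove the two containments separately, both by chasing elements through $\alpha^*$ and using Lemma~\ref{flat-lem} to understand $\KK[G]\otimes(\KK[A]/I)$, which is a free $\KK[G]$-module, so in particular $\KK[G]\otimes I$ is "saturated" in the appropriate sense. Concretely: $g\in(\cl_G(I):\langle x_j\rangle)$ means $x_j g\in\cl_G(I)$, i.e. $\alpha^*(x_j)\alpha^*(g)\in\KK[G]\otimes I$. Using the congruence above this says $(f_{jj}\otimes x_j)\,\alpha^*(g)\in\KK[G]\otimes I$, and multiplying by the unit $f_{jj}^{-1}\otimes 1$ gives $(1\otimes x_j)\,\alpha^*(g)\in\KK[G]\otimes I$. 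Now I want to conclude $\alpha^*(g)\in\KK[G]\otimes(I:\langle x_j\rangle)$, which by definition is $\cl_G(I:\langle x_j\rangle)$ applied — wait, more precisely I need $\alpha^*(g)\in\KK[G]\otimes(I\colon\langle x_j\rangle)$. This follows because in $\KK[G]\otimes(\KK[A]/I)$, which is flat over $\KK[A]/I$, multiplication by (the image of) $x_j$ has the property that an element is killed iff it comes from $\KK[G]\otimes\bigl((I:\langle x_j\rangle)/I\bigr)$ — here one uses flatness of $\KK[G]$ over $\KK$ to commute the tensor product with the kernel: $\ker\bigl(\KK[G]\otimes(\KK[A]/I)\xrightarrow{\cdot x_j}\KK[G]\otimes(\KK[A]/I)\bigr)=\KK[G]\otimes\ker\bigl((\KK[A]/I)\xrightarrow{\cdot x_j}(\KK[A]/I)\bigr)$. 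The reverse containment is easier: if $g\in\cl_G(I:\langle x_j\rangle)$ then $\alpha^*(g)\in\KK[G]\otimes(I:\langle x_j\rangle)$, so $(1\otimes x_j)\alpha^*(g)\in\KK[G]\otimes I$, and reversing the unit-multiplication step gives $\alpha^*(x_j)\alpha^*(g)=\alpha^*(x_j g)\in\KK[G]\otimes I$, i.e. $x_j g\in\cl_G(I)$.

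The main obstacle I anticipate is the flatness/exactness bookkeeping in part (b): making rigorous the claim that tensoring with $\KK[G]$ commutes with taking the kernel of multiplication-by-$x_j$, and correctly identifying $(I:\langle x_j\rangle)/I$ as that kernel inside $\KK[A]/I$. This is standard commutative algebra (flatness of $\KK[G]$ as a $\KK$-vector space, hence as a $\KK$-module, gives exactness of $\KK[G]\otimes_\KK(-)$, and one applies it to the exact sequence $0\to(I:\langle x_j\rangle)/I\to\KK[A]/I\xrightarrow{\cdot x_j}\KK[A]/I$), but it needs to be stated carefully. A secondary point to be careful about is that the congruence manipulations take place in $\KK[G\times A]=\KK[G]\otimes\KK[A]$ and that multiplying by $f_{jj}^{-1}\otimes 1$ genuinely stays inside this ring and preserves the submodule $\KK[G]\otimes I$; both are immediate once one notes $f_{jj}^{-1}\otimes 1\in\KK[G]\otimes\KK[A]$ and $\KK[G]\otimes I$ is an ideal. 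Everything else is a routine element chase through the ring homomorphism $\alpha^*$.
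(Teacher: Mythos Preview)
Your proposal is correct and follows essentially the same approach as the paper: both arguments hinge on the congruence $\alpha^*(x_j)\equiv f_{jj}\otimes x_j \pmod{\KK[G]\otimes I}$ (using $x_i\in I$ for $i\prec j$) together with invertibility of $f_{jj}$, and then invoke flatness of $\KK[G]$ over $\KK$ to pass from $(1\otimes x_j)\alpha^*(g)\in\KK[G]\otimes I$ to $\alpha^*(g)\in\KK[G]\otimes(I\colon\langle x_j\rangle)$. The only cosmetic difference is that you phrase the flatness step as ``tensoring commutes with the kernel of multiplication by $x_j$ on $\KK[A]/I$'', whereas the paper phrases it as ``tensoring commutes with the intersection $I\cap\langle x_j\rangle$'' and then cancels $1\otimes x_j$; these are equivalent formulations of the same exactness property.
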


\begin{proof}
If $x_j \notin I$ then $x_j\notin \cl_G(I)$ since  $\cl_G(I)\subseteq  I$.
Conversely, if $x_j \in I$ then $x_i \in I$ for all $i\preceq j$ so by hypothesis $\alpha^\ast(x_j) \in \KK[G]\otimes I$ and $x_j \in \cl_G(I)$. This proves part (a).

To prove part (b), let $I'=(I \colon \langle x_j\rangle)$. 
Note that if $x_j \in I$ then we have $I' = \KK[A]=\cl_G(I') $ and also $(\cl_G(I) \colon \langle x_j \rangle) = \KK[A]$ since $x_j \in \cl_G(I)$ by part (a). We may therefore assume that $x_j \notin I$, although this condition does not play a role in the following argument.
To show that $
(\cl_{G}(I) \colon \langle x_j\rangle)
=
\cl_{G}(I') 
$ we check that each ideal contains the other.
First let $F \in \cl_G(I')$.
Then 
\[\textstyle\alpha^\ast (x_j F) = \alpha^\ast (x_j) \alpha^\ast(F) = \sum_{i\prec j} \underbrace{(f_{ij} \otimes x_i)}_{ \in \KK[G]\otimes I}  \alpha^\ast(F)+ \underbrace{(f_{jj}\otimes x_j)
 \alpha^\ast(F)}_{\in \KK[G]\otimes x_j I'}.\]
 As we have $x_j I' = I \cap \langle x_j\rangle \subseteq I$, it follows that $\alpha^\ast (x_j F)\in \KK[G]\otimes I$,
 so we deduce that $x_j F \in \cl_G(I)$ and therefore $F \in (\cl_G(I) \colon \langle x_j\rangle)$. This shows that $\cl_G(I') \subseteq (\cl_G(I) \colon \langle x_j\rangle)$.
 
 For the reverse containment, consider a generic element of $\cl_G(I) \cap \langle x_j\rangle$,
 which must have the form $x_j F$ for some $F \in \KK[A]$  with $\alpha^\ast(x_jF) \in  \KK[G]\otimes I$. 
 Since we can write 
 \[ \textstyle\alpha^\ast(x_jF)=\alpha^\ast(x_j)\alpha^\ast(F) =\sum_{i \prec j} \underbrace{(f_{ij} \otimes x_i)}_{\in\KK[G]\otimes I} \alpha^\ast(F) + (f_{jj} \otimes x_j)\alpha^\ast(F),\]
it follows that  $(f_{jj} \otimes x_j)\alpha^\ast(F) \in \KK[G]\otimes I$.
As $f_{jj}$ is invertible, we  have $(1\otimes x_j) \alpha^\ast(F) \in \KK[G]\otimes I$. 
This element is also in $ \KK[G]\otimes \langle x_j\rangle$,
and so 
\be\label{flat-eq1} (1\otimes x_j) \alpha^\ast(F)\in  (\KK[G]\otimes I) \cap (\KK[G] \otimes \langle x_j\rangle).
\ee
Recall that we view both $\KK[G]\otimes I$ and $\KK[G] \otimes \langle x_j\rangle$ as submodules of $\KK[G\times A]$, so the intersection on the right is well-defined.
Under this convention, it holds that  
\be\label{flat-eq2}  (\KK[G]\otimes I) \cap (\KK[G] \otimes \langle x_j\rangle)=\KK[G] \otimes (I\cap \langle x_j\rangle) = \KK[G] \otimes x_j I'\ee
since $\KK[G]$ is a free $\KK$-module (as it is well-known for modules over a commutative ring that  tensoring with a flat module commutes with finite intersections). 
From \eqref{flat-eq1} and \eqref{flat-eq2} we get
\[(1\otimes x_j) \alpha^\ast(F) \in  \KK[G] \otimes x_j I',\]
 which is only possible  if $\alpha^\ast(F) \in \KK[G] \otimes I'$. We conclude that $F \in \cl_G(I')$ and $x_j F \in x_j \cl_G(I')$.
 This means that $\cl_G(I) \cap \langle x_j\rangle \subseteq x_j \cl_G(I')$ so $(\cl_G(I) \colon \langle x_j\rangle) \subseteq \cl_G(I')$.
  \end{proof}
  
  Our second lemma is a straightforward property of maximal ideals in polynomial rings.
  
  \begin{lemma}\label{tlem2}
  Suppose 
  $I = \langle x_i - c_i : i \in \cN \rangle \subset\KK[A]$ for certain constants $c_i \in \KK$ with $ i \in \cN$,
so that $I$   is the (maximal) vanishing ideal of a point in $A$.
Then for any given $j \in \cN$ the following properties are equivalent:
(a) $x_j \notin I$, (b) $(I \colon \langle x_j \rangle) =  I$, and  (c) $c_j\neq 0$.
\end{lemma}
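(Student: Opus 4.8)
The plan is to establish the chain of implications (c) $\Rightarrow$ (a) $\Rightarrow$ (b) $\Rightarrow$ (c), relying only on the elementary fact that $I = \langle x_i - c_i : i \in \cN\rangle$ is a maximal ideal of the polynomial ring $\KK[A]$ with residue field $\KK$.

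First I would record the basic observation underlying everything: the quotient map $\KK[A] \to \KK[A]/I$ may be identified with evaluation at the point $(c_i)_{i \in \cN} \in A$, so that $x_j \equiv c_j \pmod{I}$ for every $j \in \cN$. Since $I$ is a proper ideal it contains no nonzero constant, and hence $x_j \in I$ if and only if $c_j = 0$. This gives the equivalence of (a) and (c).

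Next, for (a) $\Rightarrow$ (b): if $x_j \notin I$ then, because $I$ is maximal and therefore prime, the relation $x_j g \in I$ forces $g \in I$; since the containment $I \subseteq (I \colon \langle x_j\rangle)$ always holds, we conclude $(I \colon \langle x_j\rangle) = I$. (This is exactly the remark stated just before Definition~\ref{ts-def}, that a maximal ideal $I$ with $f \notin I$ satisfies $I = (I \colon \langle f\rangle)$, applied to $f = x_j$.) Finally, (b) $\Rightarrow$ (a) is the contrapositive of the observation, also recorded before Definition~\ref{ts-def}, that $(I \colon \langle f\rangle) = \KK[A]$ whenever $f \in I$: if $x_j \in I$ then $(I \colon \langle x_j\rangle) = \KK[A]$, which strictly contains the proper ideal $I$, so (b) fails.

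There is no genuine obstacle here; the statement is a routine consequence of $I$ being maximal. The only point needing a word of care is that $I$ must be invoked as a \emph{proper} ideal — equivalently, the constants $c_i$ are consistent because the point $(c_i)_{i\in\cN}$ lies in $A$ — which is part of the hypothesis.
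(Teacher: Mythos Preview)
Your proof is correct and follows essentially the same approach as the paper: both use the observations before Definition~\ref{ts-def} to handle (a)~$\Leftrightarrow$~(b), and both reduce (a)~$\Leftrightarrow$~(c) to the fact that a proper ideal contains no nonzero constant (you via the evaluation quotient, the paper via the explicit identity $1 = c_j^{-1}(x_j - (x_j - c_j))$).
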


\begin{proof}
Properties (a) and (b) are equivalent since $I$ is a maximal ideal, in view of the observations before Definition~\ref{ts-def}.
If $x_j \notin I$ then clearly $c_j \neq 0$.
Conversely, if $c_j \neq 0$ then we cannot have $x_j \in I$ as then   $1 = \tfrac{1}{c_j}( x_j-(x_j-c_j)) \in I$,
contradicting the assumption that $I$ is a proper ideal.
  \end{proof}

\section{Matrix Schubert varieties}\label{schumat-sect1}

This is the first of two semi-expository sections outlining applications of Theorem~\ref{new-ts-thm}.
Fix positive integers $m,n\in\PP$ and let
$ \Mat_{m\times n}$ be the variety of $m\times n$ matrices over an algebraically closed field $\KK$.
We will discuss the following generalizations of the varieties in Example~\ref{22-ex}:

\begin{definition}
For each $m\times n$ matrix $w$ over $\KK$, the corresponding 
  \defn{matrix Schubert cell}
  and  \defn{matrix Schubert variety}
are the respective subsets of $\Mat_{m\times n}$ defined by the rank conditions
   \be \begin{aligned}
    \openMX_w &= \left\{ M \in \Mat_{m\times n} : \text{$\rank M_{[i][j]} = \rank w_{[i][j]}$ for $(i,j) \in [m]\times [n]$}\right\},
    \\
    \MX_w &= \left\{ M \in \Mat_{m\times n} : \text{$\rank M_{[i][j]} \leq \rank w_{[i][j]}$ for $(i,j) \in[m]\times [n]$}\right\},
\end{aligned}
\ee
where $M_{[i][j]}$ stands for the upper left $i\times j$ submatrix of $M$. 
\end{definition}

To study matrix Schubert varieties, one is naturally led to investigate the vanishing ideals $I(\MX_w)$.
From this perspective, it is of interest to compute the initial ideal of $I(\MX_w)$ and then to construct a corresponding Gr\"obner basis. We explain below how to do such calculations using transition systems. This will recover several results from \cite{KnutsonMiller}.

The transition system approach is best suited to term orders on $\KK[\Mat_{m\times n}]$ with a certain \defn{antidiagonal} property defined in \cite[\S1.5]{KnutsonMiller}. For simplicity, we will always work with the following instance of such an order.
First, identify the coordinate ring of $\Mat_{m\times n}$ with 
\[\KK[\Mat_{m\times n}] = \KK[x_{ij} : 1\leq i \leq m, 1\leq j \leq n]\]
by setting $x_{ij}$ to be the linear function on matrices given by $x_{ij}(M) = M_{ij} \in \KK$.
Then order the monomials in this ring using the reverse lexicographic term order explained in Remark~\ref{rlto-rem}.

\begin{remark}
Prior to \cite{KnutsonMiller}, the problem of finding Gr\"obner bases  
for various special cases of the ideals $I(\MX_w)$ had already appeared in a number of places; see \cite[\S2.4]{KnutsonMiller} for a historical overview.
Since \cite{KnutsonMiller}, there has only been partial progress on understanding the Gr\"obner geometry of 
matrix Schubert varieties for arbitrary term orders; see, for example,  \cite{GaoYong,HPW,KleWei,KMY}.
\end{remark}

\subsection{A reverse lexicographic transition system}\label{msv-sect1}

The variety $\MX_w$ is   irreducible and equal to the Zariski closure of  $\openMX_w$ \cite[Ch.~15]{MillerSturmfels}.
 Let $\B_m$ and $\B_n$ be the Borel subgroups of invertible lower-triangular matrices in $\Mat_{m\times m}$ and $\Mat_{n\times n}$. 
Then $ \B_m\times \B_n$ acts algebraically on $ \Mat_{m\times n}$ by 
 $ (g,h)\cdot M = g M h^{\top}$, and  $\openMX_w$ is the orbit of $w$ under this action.
It follows that in the notation of Section~\ref{orbit-sect}, we can express
\be\label{msv-m-eq} I(\MX_w) = \cl_{\B_m\times \B_n}(\cM_w)
\quad\text{for the maximal ideal }
 \cM_w := \langle x_{ij} - w_{ij} : (i,j) \in [m]\times[n]\rangle.\ee

As a matrix has rank $k$ if and only if all of its $(k+1)\times(k+1)$ minors are zero, 
$I(\MX_w)$  contains certain minors of the matrix of variables $\cX = [x_{ij}]_{1\leq i \leq m, 1\leq j \leq n}$. Specifically, if we define 
\be\label{msv-minors-eq}
I_w := \left\langle \det(\cX_{RC}) : i \in [m], \ j \in [n],\ R \subseteq [i],\ C\subseteq[j],\ |R|=|C|=1+\rank w_{[i][j]} \right\rangle
\ee
then it is clear \emph{a priori} that $I_w \subseteq I(\MX_w)$.
We will see later (in Corollary~\ref{msv-cor}) that this containment is actually equality.

For a $k\times k$ matrix $M$ define  
$\adiag(M) := M_{1,k}M_{2,k-1}M_{3,k-2}\cdots M_{k,1}$ to be the product of the entries on the antidiagonal.
Then we also consider the monomial ideal
\be\label{jdef-eq}
J_w := \left\langle \adiag(\cX_{RC}) : i \in [m], \ j \in [n],\ R \subseteq [i],\ C\subseteq[j],\ |R|=|C|=1+\rank w_{[i][j]} \right\rangle.
\ee
Observe that $ \adiag(\cX_{RC})$ is the leading term of $\det(\cX_{RC}) $ under the reverse lexicographic term order.
Therefore, relative to this term order, we automatically have $J_w \subseteq \init(I_w) \subseteq \init(I(\MX_w))$.

\begin{theorem} \label{msv-thm}
The set $\cT=\{(I(\MX_w),J_w) : w \in \Mat_{m\times n}\}$
is a finite transition system.
\end{theorem}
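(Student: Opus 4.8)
The goal is to verify the defining conditions of a transition system (Definition~\ref{ts-def}) for the set $\cT=\{(I(\MX_w),J_w) : w \in \Mat_{m\times n}\}$. Conditions (T1) and (T2) are relatively soft: $J_w\subseteq\init(I(\MX_w))$ was already noted in the excerpt, the ideal $I(\MX_w)$ is maximal precisely when $\openMX_w$ is a single point, i.e.\ when $\MX_w$ is one point (which forces $w$ to be a full-rank square matrix with all $\rank w_{[i][j]}$ minimal, so $\cM_w$ is maximal and $J_w$ picks up every variable), and (T2) holds because $J_w$ depends only on the rank table of $w$, which is determined by $I(\MX_w)=\cl_{\B_m\times\B_n}(\cM_w)$. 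The real content is producing, for each non-maximal $w$ with $I(\MX_w)\neq J_w$, a variable $x_{pq}$ and a set $\Phi\subseteq\cT$ satisfying (T3) and (T4).

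\textbf{Choosing the pivot variable and $\Phi$.} The natural choice, following \cite{MP2022} and the examples in the excerpt, is to take $f=x_{pq}$ where $(p,q)$ is an entry that is \emph{not} a $1$ in $w$ and is, in a suitable sense, ``maximal'' — the plan is to locate the largest index (in the lexicographic order on pairs used for the reverse lexicographic term order) for which the rank table forces nothing, then recurse. Concretely, I would let $(p,q)$ be chosen so that $x_{pq}\notin I(\MX_w)$ (equivalently $w_{pq}$ can be taken nonzero after translating, or more precisely the entry lies outside the ``forced zero'' region), and let $\Phi$ be the set of pairs $(I(\MX_v),J_v)$ where $v$ ranges over those matrices obtained from $w$ by the standard transition move (deleting the $1$ in row $p$ or column $q$ and re-inserting, producing the covers in the appropriate order). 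To verify (T3) one must show $I(\MX_w)+\langle x_{pq}\rangle\subseteq\bigcap_{v\in\Phi}I(\MX_v)$ and $J_w+\langle\adiag\text{-part of }x_{pq}\rangle = \bigcap_{v\in\Phi}J_v$; since $\init(x_{pq})=x_{pq}$, the second equation is the combinatorial identity that the monomial ideal generated by the antidiagonals of $w$ together with $x_{pq}$ equals the intersection of the antidiagonal monomial ideals of the $v$'s. For (T4), the key point is Lemma~\ref{tlem1}(b): because $\alpha^*(x_{pq})$ is lower-triangular in the partial order where $(i,j)\preceq(p,q)$ iff $i\le p$ and $j\le q$, with invertible leading coefficient, the ideal quotient $(\cl_G(\cM_w)\colon\langle x_{pq}\rangle)$ equals $\cl_G((\cM_w\colon\langle x_{pq}\rangle))$, which is again of the form $I(\MX_{w'})$ for an explicit $w'$ (or equals the whole ring, in which case (T4) is vacuous). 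Lemma~\ref{tlem2} identifies exactly when the quotient is proper. This places the pair $(I',J')$ back in $\cT$.

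\textbf{The geometric input and the main obstacle.} The cleanest route to the containment side of (T3) and to the monomial identity is the geometric argument sketched in the introduction: for a well-chosen companion matrix $y$, one has $\MX_w\cap\MX_y=\bigcup_{v\in\Phi}\MX_v$ set-theoretically, and Ramanathan's theorem \cite{Ramanathan} upgrades this to a scheme-theoretic (reduced) equality $I(\MX_w)+I(\MX_y)=\bigcap_{v\in\Phi}I(\MX_v)$; choosing $y$ so that $I(\MX_y)=\langle x_{pq}\rangle$ (which is possible when $(p,q)$ is maximal among non-forced-$1$ positions, since then $\MX_y$ is the hyperplane $x_{pq}=0$) yields exactly (T3)'s first clause with equality. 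The parallel monomial statement for the $J_w$'s is then a purely combinatorial lemma about antidiagonal ideals, provable directly by comparing standard monomials or by an inclusion-exclusion on which minors acquire $x_{pq}$ as a factor. \emph{The main obstacle} I anticipate is the bookkeeping in the monomial identity $J_w+\langle x_{pq}\rangle=\bigcap_{v\in\Phi}J_v$: one must check that every antidiagonal generator of a $J_v$ either already lies in $J_w$ or is divisible by $x_{pq}$, and conversely that the intersection does not acquire spurious monomials — this requires understanding precisely how the rank tables of the covers $v$ differ from that of $w$ in the region ``to the lower-right of $(p,q)$,'' and matching this against the combinatorics of antidiagonals of submatrices $\cX_{RC}$ with $(p,q)\in R\times C$. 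Getting the pivot $(p,q)$ chosen so that simultaneously $f\notin I(\MX_w)$, the set $\Phi$ is nonempty, and the monomial identity holds on the nose is the delicate step; everything else (including the reduction (T4)) follows from the lemmas already assembled in Sections~\ref{prelim-sect} and~\ref{ts-sect}, particularly Lemmas~\ref{tlem1}, \ref{tlem2}, and the flatness lemma~\ref{flat-lem}, together with the standard facts that $\MX_w$ is irreducible with vanishing ideal $\cl_{\B_m\times\B_n}(\cM_w)$.
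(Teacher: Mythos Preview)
Your overall strategy is the paper's: pick a variable $x_{pq}$, take $\Phi$ to be the set of pairs indexed by Bruhat covers $v=wt_{pr}$ with $r>p$, cite \cite{MP2022} (or equivalently Ramanathan plus an antidiagonal combinatorial identity) for (T3), and dispose of (T4) via the colon-ideal machinery. The paper's proof of (T4) is actually the one-line geometric observation that $\MX_w$ is irreducible and not contained in the hyperplane $\{x_{pq}=0\}$, so $(I(\MX_w)\colon\langle x_{pq}\rangle)=I(\MX_w)$ and (T4) is vacuous; your route through Lemmas~\ref{tlem1} and~\ref{tlem2} is the algebraic alternative the paper mentions and carries out in full only in the skew-symmetric case.

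There is one concrete imprecision that would cause trouble if you tried to execute the plan as written. Your description of the pivot $(p,q)$ as ``the largest index \ldots\ for which the rank table forces nothing'' and as ``not a $1$ in $w$'' is wrong on both counts. The correct choice is an \emph{outer corner of} $\dom(w)$: a position just outside the Young-diagram-shaped region where $\rank w_{[i][j]}=0$. At such a corner one has $w_{pq}=1$ (not $0$), and---crucially for your Lemma~\ref{tlem1} argument---every $(i,j)\prec(p,q)$ lies in $\dom(w)$, so $x_{ij}\in I(\MX_w)$ by Proposition~\ref{exerprop}. Without this, the triangularity hypothesis of Lemma~\ref{tlem1} fails. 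With it, Lemma~\ref{tlem2} gives $(\cM_w\colon\langle x_{pq}\rangle)=\cM_w$ (since $w_{pq}\neq 0$), hence the quotient is $I(\MX_w)$ itself, not some other $I(\MX_{w'})$; so (T4) is vacuous rather than requiring you to locate $(I',J')$ in $\cT$. The same outer-corner choice is what makes the monomial identity $J_w+\langle x_{pq}\rangle=\bigcap_{v\in\cC(w)}J_v$ tractable (this is \cite[Lem.~5.11]{MP2022}); with an arbitrary ``maximal'' pivot the identity need not hold.
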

 
We outline the proof of this theorem below.
Before commencing this, we first explain some of the additional data that goes into this statement, in particular:
(a) a finite indexing set for $\cT$, (b) how to identify the ideals $I(\MX_w)$ that are maximal, (c) for non-maximal $I(\MX_w)$ 
how to construct the non-constant polynomial $f  \notin I(\MX_w)$ and subset $\Phi\subseteq \cT$ 
required in Definition~\ref{ts-def}.
 
 \subsection{Finite indexing set}\label{msv-pp-sect}

Observe that   $\MX_w = \MX_{gwh^\top}$, $I_w = I_{gwh^\top}$, and $J_{w} = J_{gwh^\top}$ for all $(g,h) \in \B_m\times \B_n$ and $w \in \Mat_{m\times n}$.
Meanwhile, the $\B_m\times \B_n$-orbit of any element of $\Mat_{m\times n}$ contains a unique $m\times n$ \defn{partial permutation matrix} \cite[Ch.~15]{MillerSturmfels}, so the size of $\cT$ in Theorem~\ref{msv-thm} is   at most the number of such matrices.
The following notation is convenient to exploit these observations.

Let $S_{\infty}$ be the group of permutations of $\NN$ that fix all but finitely many points.
We define  $S_n$ to be the subgroup $S_n = \{ w \in S_\infty : w(i)=i\text{ for all } i>n\}$.
The \defn{descent sets} of
$w \in S_\infty$ are 
$\DesR(w) := \{ i \in \PP: w(i) > w(i+1)\}$ and $ \DesL(w) :=\DesR(w^{-1}).$
The set of permutations  \be  S^{(m,n)}_{\infty} := \{w \in S_{\infty} : \DesR(w) \subseteq [m]\text{ and }\DesL(w) \subseteq [n]\}\ee
 is in bijection with the finite set of $m\times n$ partial permutation matrices,
via the map that identifies $w \in S^{(m,n)}_{\infty}$ with the $m\times n$ matrix having $1$ in position $(i,w(i))$
for each $i \in [m]$ and $0$ elsewhere.
We freely make this identification in order to define $\MX_w$, $I_w$, and $J_w$ for $w \in  S^{(m,n)}_{\infty}$. Then  
\be\cT =\left\{(I(\MX_w),J_w) : w \in \Mat_{m\times n}\right\}=\left\{(I(\MX_w),J_w) : w \in  S^{(m,n)}_{\infty} \right\}\ee
and the set on the right is uniquely indexed,
since if $v,w \in   S^{(m,n)}_{\infty}$ then $I(\MX_v) \subseteq I(\MX_w)$
 if and only if $v\leq w$ in the \defn{Bruhat order} on $S_\infty$ \cite[Thm.~2.1.5]{CCG}.

\begin{example}\label{0mn-eq}
The ideal $I(\MX_w)$ is maximal if and only if $\MX_w$ is a point, which occurs only when $\MX_w = \{0\}$
as $\KK^\times = \KK\setminus\{0\}$ acts on $\MX_w$ by multiplication.
In this case the index is $w \in S_\infty^{(m,n)}$ is 
$0_{m\times n} := (n+1)(n+2)\cdots(m+n)12\cdots n \in S_\infty$
and the corresponding ideals are irrelevant in the sense that 
$ I(\MX_{0_{m\times n}}) = I_{0_{m\times n} } = J_{0_{m\times n} } = \init(I(\MX_{0_{m\times n} })) = \langle x_{ij} : (i,j) \in [m]\times[n]\rangle.$
\end{example}

\subsection{Proof of Theorem~\ref{msv-thm}}\label{msv-non-sect}

The \defn{dominant component} of any bijection $w \in \PP \to \PP$ is the set of pairs 
\be\label{dom-eq} \dom(w) := \left\{ (i,j) \in\PP\times \PP : \rank w_{[i][j]}=0\right\},\ee
where we compute $ \rank w_{[i][j]}$ by
identifying $w$ with the (infinite) permutation matrix having $1$ in each position $(i,w(i))$.
The set $\dom(w)$ always coincides with the \defn{Young diagram} \be\D_\lambda := \{ (i,j) \in \PP\times \PP : 1\leq i \leq \lambda_j\}\ee of some integer partition $\lambda = (\lambda_1\geq \lambda_2\geq \dots \geq 0)$.
An \defn{outer corner} of $\dom(w)$ is a pair $(i,j) \in \PP\times \PP$ such that $\dom(w)\subsetneq \dom(w) \sqcup\{(i,j)\} = \D_\mu$  
for some integer partition $\mu$.
 
\begin{example}
Consider $w = 43152 \in S_5$ written in one-line notation. Then $\DesR(w) =\{1,2,4\}$ and $\DesL(w) = \{2,3\}$ so $w \in S_\infty^{(m,n)}$ for any $m\geq 4$ and $n\geq 3$. The 
dominant component of $w$ is 
\[ \dom(w) =  \left\{\begin{smallmatrix}\square&\square&\square&1 & \cdot\\ \square&\square&1 & \cdot &\cdot \\ 1 & \cdot & \cdot & \cdot & \cdot \\ \cdot & \cdot & \cdot & \cdot & 1 \\ \cdot & 1 & \cdot & \cdot & \cdot \end{smallmatrix}\right\} = \{(1,1),(1,2),(1,3),(2,1),(2,2)\} = \D_\lambda\] for $\lambda=(3,2)$. The outer corners of $\dom(w)$ are $(1,4)$, $(2,3)$, and $(3,1)$.
\end{example}

\begin{proposition}\label{exerprop}
Suppose $w \in S_\infty^{(m,n)}$ and $(i,j) \in [m]\times [n]$.
Then $(i,j) \in \dom(w)$ if and only if $x_{ij} \in I(\MX_w)$ if and only if $x_{ij} \in J_w$.
\end{proposition}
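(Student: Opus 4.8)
The plan is to prove the chain of equivalences by a cycle of implications, exploiting the characterizations already set up in the excerpt. First I would show $x_{ij} \in J_w \iff (i,j) \in \dom(w)$, which is essentially a combinatorial unwinding of the definition \eqref{jdef-eq}: the monomial ideal $J_w$ is generated by antidiagonal products $\adiag(\cX_{RC})$ of submatrices of size $1+\rank w_{[i'][j']}$. A variable $x_{ij}$ lies in a monomial ideal iff it is divisible by one of the generators, which forces a degree-one generator, i.e. a $1\times 1$ ``minor'' $x_{ij}$ with $R=\{i\}$, $C=\{j\}$, $|R|=|C|=1+\rank w_{[i][j]}$; this happens precisely when $\rank w_{[i][j]}=0$, i.e. when $(i,j)\in\dom(w)$ by \eqref{dom-eq}. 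Conversely if $(i,j)\in\dom(w)$ then $x_{ij}$ is literally a generator of $J_w$.

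Next I would handle $x_{ij}\in J_w \implies x_{ij}\in I(\MX_w)$: this is immediate from the observation recorded just before Theorem~\ref{msv-thm}, namely $J_w \subseteq \init(I_w)\subseteq \init(I(\MX_w))$ — but more directly, if $(i,j)\in\dom(w)$ then $x_{ij}=\det(\cX_{\{i\}\{j\}})$ is one of the generators of $I_w \subseteq I(\MX_w)$ in \eqref{msv-minors-eq}. To close the cycle I must show $x_{ij}\in I(\MX_w) \implies (i,j)\in\dom(w)$. For this I would argue geometrically: $x_{ij}\in I(\MX_w)$ means the coordinate function $x_{ij}$ vanishes identically on $\MX_w$, equivalently on the dense cell $\openMX_w$, equivalently on the single point $w$ (together with its $\B_m\times\B_n$-orbit — but vanishing at $w$ alone is what matters since $\MX_w$ is the closure of that orbit and coordinate functions are continuous). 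So $x_{ij}\in I(\MX_w)$ forces $w_{ij}=0$. That alone is not enough; I need to upgrade ``$w_{ij}=0$ and $x_{ij}$ vanishes on all of $\MX_w$'' to ``$\rank w_{[i][j]}=0$''. The cleanest route: if $(i,j)\notin\dom(w)$, then $\rank w_{[i][j]}\geq 1$, so there is some entry $w_{i'j'}=1$ with $i'\leq i$, $j'\leq j$; applying a suitable element of $\B_m\times\B_n$ (lower-triangular row/column operations), one can move a nonzero value into the $(i,j)$ position of a matrix in $\openMX_w$ while preserving all the rank conditions defining the cell, hence $x_{ij}$ does not vanish on $\MX_w$. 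Concretely one adds row $i'$ (scaled) to row $i$ and column $j'$ (scaled) to column $j$, which are valid $\B_m$, $\B_n$ operations precisely because $i'\leq i$ and $j'\leq j$.

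The main obstacle I anticipate is making this last ``orbit perturbation'' argument airtight: one must verify that the modified matrix still lies in $\openMX_w$, i.e. that the lower-triangular operation does not change any rank $\rank M_{[k][\ell]}$. Since $\B_m\times\B_n$ acts on $\Mat_{m\times n}$ by $(g,h)\cdot M = gMh^\top$ and this action preserves each matrix Schubert cell by definition (the cell is an orbit), this is automatic once I phrase the perturbation as the action of an explicit elementary lower-triangular matrix — so in fact the obstacle dissolves if I simply invoke that $\openMX_w$ is a $\B_m\times\B_n$-orbit, as recorded before \eqref{msv-m-eq}. Thus the real content is the observation that for $(i,j)\notin\dom(w)$ the orbit of $w$ contains a matrix with nonzero $(i,j)$ entry, which follows because $w$ itself has a $1$ in some position $(i',j')$ weakly northwest of $(i,j)$, and the group element sending $w$ to $w + (\text{that } 1)\cdot E_{ij}$-type modification is lower-triangular in both factors. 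A slicker alternative, avoiding explicit group elements, is to use the already-cited fact \cite[Thm.~2.1.5]{CCG} that $I(\MX_v)\subseteq I(\MX_w)$ iff $v\leq w$ in Bruhat order, together with the elementary fact that the transposition-type permutation $w' \leq w$ obtained from $w$ by the relevant Bruhat move has $x_{ij}\notin I(\MX_{w'})$ when $(i,j)\notin \dom(w')$; but the direct orbit argument is self-contained, so I would present that.
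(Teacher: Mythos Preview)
Your proposal is correct and follows the same overall cycle of implications as the paper. The one place where your argument genuinely differs is in showing $x_{ij}\notin I(\MX_w)$ when $(i,j)\notin\dom(w)$. You locate a $1$ at some position $(i',j')$ weakly northwest of $(i,j)$ and push it to the $(i,j)$-slot via an explicit element of $\B_m\times\B_n$, invoking the fact that $\openMX_w$ is an orbit. The paper instead observes directly that the single-entry matrix $E_{ij}$ already lies in $\MX_w$: since $\rank w_{[k][l]}\geq 1$ whenever $k\geq i$ and $l\geq j$, every rank inequality defining $\MX_w$ is satisfied by $E_{ij}$, and $x_{ij}(E_{ij})=1$. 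The paper's route is marginally cleaner in that it avoids any group-action bookkeeping and works directly with the closed variety $\MX_w$ rather than the open cell; your route has the mild advantage that it generalizes more transparently to the skew-symmetric and symmetric settings, where one is already thinking in terms of $\B_n$-orbits.
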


\begin{proof}
Assume $(i,j) \in \dom(w)$. Then $\rank M_{[i][j]}=\rank w_{[i][j]}=0$, so $M_{ij}=0$ for all $M \in \MX_w$,
and therefore $x_{ij} \in I(\MX_w)$. We also have $x_{ij} \in J_w$ as $x_{ij} = \adiag(\cX_{RC})$ 
 for $R = \{i\}$ and $C=\{j\}$.

Now suppose $(i,j) \notin \dom(w)$.
Then $\rank w_{[k][l]}\geq \rank w_{[i][j]} \geq 1$ whenever $m\geq k\geq i$ and $n\geq l \geq j$.
It follows that $\MX_w$ contains the matrix $E_{ij}$ with $1$ in position $(i,j)$ and $0$ in all other positions, so $x_{ij} \notin I(\MX_w)$ as $x_{ij}(E_{ij}) = 1\neq 0$. It also follows that $x_{ij}$
does not occur as any of the monomials $\adiag(\cX_{RC})$ generating $J_w$ in \eqref{jdef-eq}, so $x_{ij} \notin J_w$ since $J_w$ is a monomial ideal.
\end{proof}

For positive integers $i<j$ let $t_{ij} = (i, j)\in S_\infty$ be the transposition swapping $i$ and $j$.
We write $u \lessdot v$ for $u,v \in S_\infty$ if $v$ covers $u$ in the Bruhat order on $S_\infty$.
Recall   from \cite[Lem.~2.1.4]{CCG} that this holds precisely when $v=ut_{ij}$ for integers $i<j$ with $u(i)<u(j)$ 
such that no $i<e<j$ has $u(i)<u(e)<u(j)$.

Suppose $0_{m\times n} \neq w \in S_\infty^{(m,n)}$ so that $I(\MX_w)$ is not a maximal ideal.
Then there exists an outer corner of $\dom(w)$ in $[m]\times [n]$.
Choose any such outer corner $(p,q) \in[m]\times [n]$ and set 
\be\label{msv-f-eq}
f = x_{pq} \in \KK[\Mat_{m\times n}].
\ee
Since $(p,q) \notin \dom(w)$, we have $f \notin I(\MX_w)$ by Proposition~\ref{exerprop}.
Next, consider the set 
\be
\cC(w) = \{ wt_{pr} \in S_\infty : p < r \in \PP\text{ and } w \lessdot wt_{pr} \}.
\ee
It is known that $\varnothing \subsetneq \cC(w) \subseteq S_\infty^{(m,n)}$ by \cite[Lem.~5.8]{MP2022}. Finally define 
\be\label{msv-phi-eq}
\Phi  = \{ ( I(\MX_v), J_v) : v \in \cC(w)\}.
\ee
To make these definitions canonical, one can take $(p,q)$ 
to be the lexicographically minimal outer corner of $\dom(w)$ in $[m]\times [n]$,
although any outer corner works just as well.

\begin{proof}[Proof of Theorem~\ref{msv-thm}]
We have already observed in Sections~\ref{msv-sect1} and \ref{msv-pp-sect} that axioms (T1) and (T2) in Definition~\ref{ts-def} hold.

For the other axioms, fix $ w \in S_\infty^{(m,n)}$ with $I(\MX_w)$ not maximal. 
Choose an outer corner $(p,q) \in [m]\times [n]$ of $\dom(w) $, and 
define $f = x_{pq}\notin I(\MX_w)$ and $\Phi \subseteq \cT$ as in \eqref{msv-f-eq} and \eqref{msv-phi-eq}.
The identities required for axiom (T3) when $(I,J)= (I(\MX_w),J_w)$ are provided in \cite[Lem.~5.11]{MP2022}.

It remains to check axiom (T4).
This holds by default as 
 $(I(\MX_w) \colon \langle f \rangle) =I(\MX_w)$. There is a simple geometric reason for this equality: if $H\subset \Mat_{m\times n}$ is the hyperplane where $f = 0$, then 
 \[(I(\MX_w) \colon \langle f \rangle) = I(\overline{\MX_w \setminus H}),\] and $\overline{\MX_w \setminus H} = \MX_w$ because $\MX_w$ is an irreducible variety with $\MX_w \not\subseteq H$ as $f \notin I(\MX_w)$. 

One can also show  $(I(\MX_w) \colon \langle f \rangle) =I(\MX_w)$ by an algebraic method based on Lemma~\ref{tlem1}.
 See the proof of Theorem~\ref{ss-msv-thm} for a prototype of this argument.
\end{proof}
 
The preceding theorem lets us recover some results of Knutson and Miller \cite{KnutsonMiller}.

 \begin{corollary}[\cite{KnutsonMiller}] 
 \label{msv-cor}
 If $w \in \Mat_{m\times n}$ then
$I_w = I(\MX_w)$ is prime and $J_w = \init(I_w)$,
and the set of minors generating $I_w$ in \eqref{msv-minors-eq} is a Gr\"obner basis in the reverse lexicographic term order.
\end{corollary}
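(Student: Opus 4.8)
The plan is to deduce Corollary~\ref{msv-cor} directly from Theorem~\ref{msv-thm} together with the general machinery already assembled in Section~\ref{ts-sect}. By Theorem~\ref{msv-thm}, the set $\cT = \{(I(\MX_w), J_w) : w \in S_\infty^{(m,n)}\}$ is a transition system, so Theorem~\ref{new-ts-thm} immediately gives $\init(I(\MX_w)) = J_w$ for every $w$. This is the central input; the rest is bookkeeping.

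First I would establish that $I_w = I(\MX_w)$. We already know $I_w \subseteq I(\MX_w)$ from the discussion around \eqref{msv-minors-eq}, and moreover $\adiag(\cX_{RC})$ is the leading term of $\det(\cX_{RC})$ under the reverse lexicographic order, so $J_w \subseteq \langle \init(g) : g \in G_w\rangle \subseteq \init(I_w) \subseteq \init(I(\MX_w)) = J_w$, where $G_w$ is the generating set of minors from \eqref{msv-minors-eq}. Hence all these containments are equalities; in particular $\langle \init(g) : g \in G_w\rangle = \init(I(\MX_w))$. Applying Corollary~\ref{ts-gr-cor} (or directly Corollary~\ref{cor:init-facts}) to the set $G_w \subseteq I(\MX_w)$, which satisfies $J_w \subseteq \langle \init(g) : g \in G_w\rangle$, shows that $G_w$ is a Gr\"obner basis for $I(\MX_w)$; in particular $\langle G_w\rangle = I(\MX_w)$, i.e. $I_w = I(\MX_w)$. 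Combined with $\init(I_w) = \init(I(\MX_w)) = J_w$, this yields all the stated assertions about the Gr\"obner basis and the initial ideal.

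It remains to observe that $I_w = I(\MX_w)$ is prime. Since $\MX_w$ is an irreducible affine variety (as recalled at the start of Section~\ref{msv-sect1}, it is the Zariski closure of the single orbit $\openMX_w$), its vanishing ideal $I(\MX_w)$ is automatically prime over the algebraically closed field $\KK$; equivalently, $\cl_{\B_m\times\B_n}(\cM_w)$ is prime because $\cM_w$ is maximal, hence primary, and orbit closures of primary ideals are primary by the corresponding proposition in Section~\ref{orbit-sect}, while being radical (Proposition~\ref{rad-prop}) a primary radical ideal is prime.

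There is no serious obstacle here: the corollary is essentially a translation of Theorem~\ref{msv-thm} through the general lemmas. The only point requiring slight care is making sure the chain of containments $J_w \subseteq \init(I_w) \subseteq \init(I(\MX_w))$ is combined correctly with $\init(I(\MX_w)) = J_w$ to force $I_w = I(\MX_w)$ rather than merely a containment of ideals; this is exactly the content of Proposition~\ref{prop:init-facts}(b), which upgrades equality of initial ideals to equality of ideals given $I_w \subseteq I(\MX_w)$.
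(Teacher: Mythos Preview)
Your proof is correct and follows essentially the same route as the paper: apply Theorem~\ref{msv-thm} together with Corollary~\ref{ts-gr-cor} (equivalently Theorem~\ref{new-ts-thm} and Corollary~\ref{cor:init-facts}) to conclude that the minors form a Gr\"obner basis for $I(\MX_w)$, hence $I_w = I(\MX_w)$ and $J_w = \init(I_w)$, and then invoke irreducibility of $\MX_w$ for primality. Your alternative primality argument via the orbit-closure propositions is a pleasant extra, but the paper simply cites irreducibility directly.
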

 
 \begin{proof}
Corollary~\ref{ts-gr-cor} and Theorem~\ref{msv-thm}
imply that the generating set in \eqref{msv-minors-eq} is a Gr\"obner basis for $I(\MX_w)$.
Thus $I_w = I(\MX_w)$ 
is prime as $\MX_w$ is irreducible \cite[Ch.~15]{MillerSturmfels}, and $J_w = \init(I_w)$.
 \end{proof}

\section{Skew-symmetric matrix Schubert varieties}\label{schumat-sect2}

One can construct a similar transition system for the skew-symmetric analogues of $\MX_w$.
We describe this here.
This will generalize Example~\ref{ss-ts-ex}
and give another application of Theorem~\ref{new-ts-thm},
 recovering some results from \cite{MP2022}.

Recall that $\KK$ is an arbitrary field that is algebraically closed. We define a matrix $M$ to be \defn{skew-symmetric} if $M^\top=-M$ and all diagonal entries of $M$ are zero; such matrices are also often called \defn{alternating}.
Let
$ \SSMat_{n\times n}$ be the affine variety of skew-symmetric $n\times n$ matrices over  $\KK$.

\begin{definition}
For each skew-symmetric matrix $w \in \SSMat_{n\times n}$, the corresponding 
  \defn{skew-symmetric matrix Schubert cell}
  and  \defn{skew-symmetric matrix Schubert variety}
are the respective subsets
   \be \begin{aligned}
    \openSSX_w &= \left\{ M \in \SSMat_{n\times n} : \text{$\rank M_{[i][j]} = \rank w_{[i][j]}$ for all $(i,j) \in [n]\times [n]$}\right\},
    \\
    \SSX_w &= \left\{ M \in \SSMat_{n\times n} : \text{$\rank M_{[i][j]} \leq \rank w_{[i][j]}$ for all $(i,j) \in[n]\times [n]$}\right\},
\end{aligned}
\ee
where $M_{[i][j]}$ again stands for the upper left $i\times j$ submatrix of $M$. 
\end{definition}

Besides in \cite{MP2022},
the varieties $\SSX_w$ were previously studied in \cite{HMP6} and \cite{MP2020},
where their cohomology and $K$-theory classes were respectively computed (for relevant formulas, see also \cite{MP2021,MP2021b}).
The varieties $\SSX_w$ include a number of other families as special cases, like all rank $r$ skew-symmetric $n\times n$ matrices. Questions about ideals and Gr\"obner bases for some of these families were considered in various forms in earlier literature; we note in particular
\cite{DeNegri,DeNegriSbarra,HerzogTrung,JonssonWelker,RaghavanUpadhyay}.

We identify the coordinate ring of $\SSMat_{n\times n}$ with 
$\KK[\SSMat_{n\times n}] = \KK[u_{ij} : 1\leq j<i\leq n]$
where $u_{ij}$ denotes the linear function on matrices with $u_{ij}(M) = M_{ij} \in \KK$.
We order the monomials in this ring using the reverse lexicographic term order explained in Remark~\ref{rlto-rem}.

\subsection{Another reverse lexicographic transition system}\label{another-sect}

Fix a matrix $w \in \SSMat_{n\times n}$.
It is known from \cite{Cherniavsky} 
that
the variety $\SSX_w$ is irreducible and equal to the Zariski closure of  $\openSSX_w$.
The group $ \B_n$ of invertible invertible lower-triangular $n\times n$ matrices over $\KK$ acts algebraically on $ \SSMat_{n\times n}$ by 
 $ g\cdot M = g M g^{\top}$, and the skew-symmetric matrix Schubert cell $\openSSX_w$ is  the orbit of $w$ under this action.
Thus, in the notation of Section~\ref{orbit-sect}, we can express
\be\label{ss-msv-m-eq} I(\SSX_w) = \cl_{\B_n}(\SScM_w)
\quad\text{for the maximal ideal }
\SScM_w := \langle u_{ij} - w_{ij} : 1\leq j < i \leq n\rangle.
\ee

Define $\SScX$
to be the $n\times n$ skew-symmetric matrix with entries $\SScX_{ij}  = u_{ij}=-\SScX_{ji}$ for $i>j$
and with $\SScX_{ii}=0$ for all $i$.
Then it is true, just as for ordinary matrix Schubert varieties, that
\be\label{ss-msv-minors-eq}
I(\SSX_w)\supseteq \left\langle \det(\SScX_{RC}) : i \in [m], \ j \in [n],\ R \subseteq [i],\ C\subseteq[j],\ |R|=|C|=1+\rank w_{[i][j]} \right\rangle.
\ee
This implies that, relative to the reverse lexicographic term order, one has 
{\small\be\label{ss-msv-minors-eq2}
\init(I(\SSX_w)) \supseteq \left\langle \adiag(\SScX_{RC}) : i \in [m], \ j \in [n],\ R \subseteq [i],\ C\subseteq[j],\ |R|=|C|=1+\rank w_{[i][j]} \right\rangle.
\ee}%
Unlike the story for ordinary matrix Schubert varieties, both of these containments can be strict \cite[Ex.~3.12]{MP2022}.
Therefore, to get a skew-symmetric version of Theorem~\ref{msv-thm}, we cannot use the right hand side of \eqref{ss-msv-minors-eq2} as a skew-symmetric analogue of $J_w$.

Instead, we consider a  more elaborate construction from \cite{MP2022}.
Suppose $A=\{a_0<a_1<\dots<a_r\}$ and $B = \{b_0<b_1<\dots<b_r\}$ are two sets of $r+1$ positive integers. Define 
\be A\odot B =\{(a_0,b_r),(a_1,b_{r-1}),\dots,(a_r,b_0)\}= \{(a_i,b_j) : i+j=r\}.\ee Then let 
$ A\boxplus B := \{ (i,j) \in A \odot B : i \geq j\} \cup \{ (i,j) \in B \odot A : i\geq j\}$
and set 
\be \textstyle \ssx_{AB} := \begin{cases} \prod_{(i,j) \in A\boxplus B} u_{ij} &\text{if every $(i,j) \in A\boxplus B$ has $i\neq j$} \\ 0 &\text{otherwise}.\end{cases} \ee
For example, if $a\neq b$ then  $\ssx_{\{a\}\{b\}} = u_{\max\{a,b\}\min\{a,b\}}$ while if  $A = \{1,3,4\}$ and $B = \{2,5,6\}$ then
$ 
A\boxplus B = \{(4,2),(5,3),(6,1)\}
$ and $ \ssx_{AB} = u_{42}u_{53}u_{61}.$
When nonzero, the monomial $\ssx_{AB}$ is always square-free. We record a lemma for later use:

\begin{lemma}\label{ssx-var-lem}
If $\ssx_{AB} = u_{ij}$ for some  $n \geq i >j \geq 1$ then $A=B = \{ i,j\}$ or $\{A,B\} = \{ \{i\},\{j\}\}$.
\end{lemma}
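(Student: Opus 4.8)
The plan is to analyze the structure of $A\boxplus B$ under the hypothesis that the resulting monomial $\ssx_{AB}$ equals a single variable $u_{ij}$, which forces $A\boxplus B$ to be a one-element set $\{(i,j)\}$ with $i>j$. Write $|A|=|B|=r+1$. First I would observe that $A\odot B$ and $B\odot A$ are the same set of $r+1$ pairs (one is obtained from the other by swapping coordinates of each pair), so $A\boxplus B$ consists of those pairs $(i,j)\in A\odot B$ with $i\ge j$ together with their "transposes" $(j,i)$ when $j\ge i$ — equivalently, for each pair $\{x,y\}$ arising as a matched pair $(a_k,b_{r-k})$, we include exactly one ordered pair $(\max,\min)$ unless $x=y$, in which case the monomial is killed ($\ssx_{AB}=0$). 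So when $\ssx_{AB}$ is a nonzero single variable, \emph{every} matched pair $(a_k,b_{r-k})$ must contribute, and all of them must collapse to the \emph{same} unordered pair $\{i,j\}$.

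The key step is then to deduce $r=0$ or to directly pin down $A$ and $B$. Since $A=\{a_0<\dots<a_r\}$ is strictly increasing and $B=\{b_0<\dots<b_r\}$ is strictly increasing, the pairs $(a_k,b_{r-k})$ for $k=0,1,\dots,r$ have strictly increasing first coordinates and strictly decreasing second coordinates. If all of these unordered pairs equal $\{i,j\}$, then for each $k$ we have $\{a_k,b_{r-k}\}=\{i,j\}$. If $r\ge 2$, then $a_0<a_1<a_2$ all lie in $\{i,j\}$, impossible since $\{i,j\}$ has only two elements; hence $r\le 1$. The case $r=0$ gives $A=\{a_0\}$, $B=\{b_0\}$ with $\{a_0,b_0\}=\{i,j\}$ and $a_0\ne b_0$ (else the monomial is $0$), so $\{A,B\}=\{\{i\},\{j\}\}$. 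The case $r=1$ gives $\{a_0,b_1\}=\{a_1,b_0\}=\{i,j\}$ with $a_0<a_1$ and $b_0<b_1$; combined with $i>j$ this forces $a_0=b_0=j$ and $a_1=b_1=i$ (the alternative $a_0=i,b_1=j$ contradicts $a_0<a_1\le i$ unless... ) so that $A=B=\{i,j\}$; one must also check $\ssx_{AB}$ is indeed nonzero and equals $u_{ij}$ in this case, which it is since $A\boxplus B=\{(i,j)\}$.

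I expect the main obstacle to be the bookkeeping in the $r=1$ case: one must carefully rule out the "crossed" possibility where $a_0$ is paired with the larger of $\{i,j\}$, using the monotonicity $a_0<a_1$, $b_0<b_1$ and the constraint $i>j$, and verify that the surviving configuration really does produce $A\boxplus B=\{(i,j)\}$ rather than a two-element set or $0$. Everything else is immediate from the definitions of $\odot$, $\boxplus$, and $\ssx_{AB}$. No deep input is needed beyond the observation that $A\odot B$ and $B\odot A$ differ only by transposing coordinates, so the union defining $A\boxplus B$ is a set of at least $r+1-(\#\text{fixed pairs})$ ordered pairs.
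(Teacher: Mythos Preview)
Your argument is correct and is precisely the direct combinatorial route that the paper alludes to but does not carry out. The paper's proof instead invokes an external result \cite[Lem.~3.25]{MP2022}, which asserts that $\ssx_{AB}$ divides $u_{ij}$ if and only if the antidiagonal monomial $\adiag(\cX_{AB})$ divides $x_{ij}x_{ji}$, and then reads off the answer from that characterization. Your approach avoids this detour entirely: the key observation that each matched pair $(a_k,b_{r-k})$ contributes exactly one element $(\max,\min)$ to $A\boxplus B$, combined with the pigeonhole step that $a_0<a_1<\cdots<a_r$ cannot all lie in the two-element set $\{i,j\}$ when $r\geq 2$, is clean and self-contained. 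The paper calls the elementary argument ``fairly tedious,'' but your outline shows it is actually short.

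One small point to tighten in the $r=1$ case: rather than ruling out the ``crossed'' possibility by contradiction, you can simply note that the two constraints $\{a_0,b_1\}=\{i,j\}$ and $\{a_1,b_0\}=\{i,j\}$ force $a_0,a_1\in\{i,j\}$, and then $a_0<a_1$ together with $j<i$ immediately gives $a_0=j$, $a_1=i$; the same works for $b_0,b_1$. This fills in the ``unless\dots'' you left hanging, and completes the bookkeeping with no further case analysis.
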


\begin{proof}
This can be shown in a self-contained way, but the elementary argument is fairly tedious. Alternatively, 
a special case of \cite[Lem~3.25]{MP2022}
asserts that $\ssx_{AB} $ divides $u_{ij}$ if and only if $\adiag(\cX_{AB})$ divides $x_{ij}x_{ji}$,
which occurs either when $A=B=\{i,j\}$ or
$\{A,B\} = \{ \{i\},\{j\}\}$.
\end{proof}

Now we introduce the monomial ideal
\be\label{ssj-def}
\SSJ_w = \left\langle \ssx_{AB} : i,j \in [n], \ i\geq j,\ A \subseteq [i],\ B\subseteq[j],\ |A|=|B|=1+\rank w_{[i][j]} \right\rangle.
\ee
Unlike for the ideals in Section~\ref{schumat-sect1}, it is not obvious that $\SSJ_w \subseteq \init(I(\SSX_w))$. Nevertheless:

\begin{theorem} \label{ss-msv-thm}
The set $\SScF=\{(I(\SSX_w),\SSJ_w) : w \in \SSMat_{n\times n}\}$
is a finite transition system.
\end{theorem}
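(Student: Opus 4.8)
I would mirror the structure of the proof of Theorem~\ref{msv-thm}: verify the four axioms of Definition~\ref{ts-def} for the set $\SScF$, indexed by a suitable finite set of skew-symmetric partial permutations $w$. First I would observe that $\SSX_w=\SSX_{gwg^\top}$, $I(\SSX_w)=\cl_{\B_n}(\SScM_w)$, and $\SSJ_{w}=\SSJ_{gwg^\top}$ for all $g\in\B_n$, and that each $\B_n$-orbit on $\SSMat_{n\times n}$ contains a unique skew-symmetric $\{0,1,-1\}$ partial permutation matrix; this makes $\SScF$ finite and canonically indexed. The maximal ideals among the $I(\SSX_w)$ are exactly those with $\SSX_w=\{0\}$, corresponding to the irrelevant ideal, for which $\SSJ_w=I(\SSX_w)=\init(I(\SSX_w))$ is immediate; this gives (T1) in the maximal case. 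Axiom (T2) is built into the definition of $\SSJ_w$ as a function of the rank table of $w$. The containment part of (T1), namely $\SSJ_w\subseteq\init(I(\SSX_w))$, is \emph{not} obvious here (this is the point flagged just before the theorem statement), and will in fact only be established a posteriori once Theorem~\ref{new-ts-thm} applies; so in the argument I would not try to prove it directly — instead I would note that the inductive proof of Theorem~\ref{new-ts-thm} never uses $J\subseteq\init(I)$ for the top pair before descending, and so one can check (T1)'s containment clause by induction through the transition recurrences themselves. (Alternatively, one shows it independently using the lemma that $\ssx_{AB}$ divides $\adiag(\SScX_{AB})$ times a product, as in \cite[\S3]{MP2022}.)

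\textbf{The transition data.} For non-maximal $w$, following Section~\ref{msv-non-sect}, I would pick an outer corner $(p,q)\in[n]\times[n]$ of the dominant component $\dom(w)$ — here one must be careful that $\dom(w)$ for a skew-symmetric matrix is symmetric, so outer corners come in pairs $(p,q),(q,p)$; one chooses the one with $p>q$ (or $p=q$, handled separately as in \cite{MP2022}) so that $u_{pq}$ is an actual coordinate function. Then I would set $f=u_{pq}\notin I(\SSX_w)$ — nonmembership following from the skew-symmetric analogue of Proposition~\ref{exerprop} — and take $\Phi=\{(I(\SSX_v),\SSJ_v): v\in\cC(w)\}$ for an appropriate covering set $\cC(w)$ of skew-symmetric partial permutations in the Bruhat order, exactly the set identified in \cite[\S3]{MP2022}. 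The identity $I(\SSX_w)+\langle f\rangle\subseteq\bigcap_{v}I(\SSX_v)$ and the monomial identity $\SSJ_w+\langle\init(f)\rangle=\bigcap_v\SSJ_v$ — which together are axiom (T3) — are precisely the content of the transition lemmas of \cite{MP2022}; I would cite those, since reproving them is the bulk of that paper.

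\textbf{Axiom (T4).} This is where I would do genuine new work rather than citation-chasing, and it is the step I expect to be the main obstacle. As in the classical case, (T4) should hold by default because $(I(\SSX_w)\colon\langle f\rangle)=I(\SSX_w)$: geometrically, $\SSX_w$ is irreducible and not contained in the hyperplane $\{u_{pq}=0\}$, so $\overline{\SSX_w\setminus\{u_{pq}=0\}}=\SSX_w$. The excerpt explicitly promises ``a prototype of this argument'' in the proof of Theorem~\ref{ss-msv-thm} via Lemma~\ref{tlem1}, so I would give the algebraic version: exhibit a partial order $\preceq$ on the index set $\{(i,j):1\le j<i\le n\}$ — order lexicographically, or by the position in the reverse-lex order — such that the $\B_n$-coaction satisfies $\alpha^*(u_{ij})=\sum_{(k,l)\preceq(i,j)}f_{(k,l)(i,j)}\otimes u_{kl}$ with $f_{(i,j)(i,j)}$ invertible; here the point is that acting by a lower-triangular $g$ via $gMg^\top$ expresses the $(i,j)$-entry in terms of entries weakly to the upper-left, which is exactly the triangularity hypothesis. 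Choosing the outer corner $(p,q)$ so that every $(k,l)\prec(p,q)$ already lies in $\dom(w)$ — hence $u_{kl}\in I(\SSX_w)$ — Lemma~\ref{tlem1}(b) gives $(\cl_{\B_n}(\SScM_w)\colon\langle u_{pq}\rangle)=\cl_{\B_n}((\SScM_w\colon\langle u_{pq}\rangle))$, and Lemma~\ref{tlem2} shows $(\SScM_w\colon\langle u_{pq}\rangle)=\SScM_w$ since the $(p,q)$-entry of $w$ is nonzero (because $(p,q)\notin\dom(w)$ and $w$ is a partial permutation). Hence $(I(\SSX_w)\colon\langle f\rangle)=I(\SSX_w)$, so $I'=I$ and (T4) is vacuous. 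The subtlety to get right is verifying that the outer corner can always be chosen so that the strictly-smaller indices are all dominant, and that the coaction really is triangular with respect to the same order used for the reverse-lex term order — a compatibility check that is routine but needs the skew-symmetry handled with care near the diagonal. With (T1)--(T4) in hand, $\SScF$ is a transition system.
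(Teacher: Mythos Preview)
Your overall architecture matches the paper's proof almost exactly: finite indexing via $\Ifpf^{(n)}$, the outer-corner choice of $f=u_{pq}$, citation of the transition lemmas in \cite{MP2022} for (T3), and the algebraic verification of (T4) via Lemmas~\ref{tlem1} and \ref{tlem2}. However, there is one genuine error and two smaller imprecisions worth flagging.

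\textbf{The (T1) bootstrap does not work.} Your claim that ``the inductive proof of Theorem~\ref{new-ts-thm} never uses $J\subseteq\init(I)$ for the top pair before descending'' is false. Look at equation~\eqref{past-two-eq2}: the step $J+\langle\init(f)\rangle\subseteq\init(I)+\langle\init(f)\rangle$ explicitly invokes $J\subseteq\init(I)$ for the current pair, and this is what forces the chain \eqref{past-two-eq1}--\eqref{past-two-eq2} to collapse to equalities. The containment $J\subseteq\init(I)$ is also a hypothesis of Lemma~\ref{lem:hilbert}, which is applied later in the same proof. So (T1) cannot be harvested from the recurrence; it must be established independently. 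The paper does exactly what you put in parentheses as an ``alternative'': it cites \cite[Lem.~3.37 and Thm.~3.17]{MP2022} to prove $\SSJ_w\subseteq\init(I(\SSX_w))$ directly. That is the correct primary plan, not a fallback.

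\textbf{Two smaller points.} First, diagonal outer corners $(p,p)$ never occur here: as the paper notes (citing \cite[\S4.1]{MP2022}), every outer corner $(p,q)$ of $\dom(w)$ satisfies $w(p)=q$, and $w\in\Ifpf$ is fixed-point-free, so $p\neq q$ automatically. No separate handling is needed. Second, your justification that $w_{pq}\neq 0$ ``because $(p,q)\notin\dom(w)$ and $w$ is a partial permutation'' is not valid reasoning---plenty of non-dominant positions have zero entries. The correct reason is again that outer corners satisfy $w(p)=q$, so the matrix entry is $\pm 1$. The paper actually takes a slightly different route here: it deduces $u_{pq}\notin\SScM_w$ from $u_{pq}\notin I(\SSX_w)=\cl_{\B_n}(\SScM_w)$ via Lemma~\ref{tlem1}(a), then applies Lemma~\ref{tlem2} through the equivalence (a)$\Leftrightarrow$(b). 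Either argument works once stated correctly.
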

 
We explain the proof of this theorem below, following a strategy similar to the one in Section~\ref{schumat-sect1}.

 \subsection{Finite indexing set}\label{ss-msv-fin-sect}
 
 If $w \in \SSMat_{n\times n}$ then   $\SSX_{w} = \SSX_{gwg^\top}$ and $\SSJ_{w} = \SSJ_{gwg^\top}$ for all $g \in \B_n$,
 so the set $\SScF$ has cardinality at most the number $\B_n$-orbits in $\SSMat_{n\times n}$.
 We recall a finite indexing set for these orbits. 
 
 Let $\Ifpf$ be the set of fixed-point-free bijections $w : \PP \to \PP$ with $w(n) = n-(-1)^n$ for all sufficiently large $n\gg0$.
Equivalently, this is the $S_\infty$-conjugacy class of the infinite product of cycles $1_\fpf := (1, 2)(3, 4)(5, 6)\cdots.$
The \defn{visible descent set} of
$w \in \Ifpf$ is 
\be\DesFPF(w) := \{ i \in\PP: w(i) > w(i+1) <i\}.\ee
For each integer $n \in \NN$  define
$        \Ifpf^{(n)} := \left\{w \in \Ifpf : \DesFPF(w) \subseteq [n]\right\}.$
The set $\Ifpf^{(n)}$ is finite with cardinality equal to the number of involutions in $S_n$ \cite[Prop.~2.9]{MP2022}.

For each $w \in \Ifpf$ define $\ss_n(w) \in \SSMat_{n\times n}$ to be the matrix whose entry in position $(i,j)$
is $1$ if $i<j=w(i)$, $-1$ if $i>j=w(i)$, or else zero.
Then each $\B_n$-orbit in $\SSMat_{n\times n}$ contains an element of the form $\ss_n(w)$ for a unique $w \in \Ifpf^{(n)}$ by results in  \cite{Cherniavsky};
 it is not hard to show that this holds even when $\KK$ is not algebraically closed. 
Given $w \in \Ifpf^{(n)}$ we are therefore motivated to define 
\be\openSSX_w := \openSSX_{\ss_n(w)},
\qquad 
\SSX_w := \SSX_{\ss_n(w)},
\qquand
\SSJ_w := \SSJ_{\ss_n(w)}.\ee
Observe that $\openSSX_w = \openX_w \cap \SSMat_{n\times n}$ and $\SSX_w = X_w \cap \SSMat_{n\times n}$.
Finally, we can write
\[\SScF =\left\{(I(\SSX_w),\SSJ_w) : w \in \SSMat_{n\times n}\right\}=\left\{(I(\SSX_w),\SSJ_w) : w \in  \Ifpf^{(n)} \right\}\]
and the second set is uniquely indexed,
since if $v,w \in   \Ifpf^{(n)}$ then 
$I(\SSX_v) \subseteq I(\SSX_w)$ 
 if and only if $v\leq w$ in a certain \defn{Bruhat order} on $\Ifpf$ \cite[Prop.~3.11]{MP2022}.
 
\begin{example}
As in the classical case, the ideal $I(\SSX_w)$ is maximal if and only if $\SSX_w$ is a point, which occurs only when $\SSX_w = \{0\}$.
The index $w \in \Ifpf^{(n)}$ for this case is 
\[ 0^\ss_{n\times n} := (1, n+1)(2, n+2)\cdots(n,2n)(2n+1, 2n+2)(2n+3, 2n+4)\cdots \in \Ifpf^{(n)} \] 
and the ideals 
$I(\SSX_{0^\ss_{n\times n}}) = \SSJ_{0^\ss_{n\times n}} = \init(I(\SSX_{0^\ss_{n\times n}})) = \langle u_{ij} : i,j\in [n],\ i> j\rangle$ are all irrelevant.
\end{example}

\subsection{Proof of Theorem~\ref{ss-msv-thm}}\label{ss-msv-non-sect}

Recall the definition of $\dom(w)$ and  its set of outer corners from Section~\ref{msv-non-sect}.

 \begin{proposition}\label{ss-exerprop}
Suppose $w \in \Ifpf^{(n)}$ and $(i,j) \in [n]\times [n]$ has $i>j$.
Then $(i,j) \in \dom(w)$ if and only if $u_{ij} \in I(\SSX_w)$ if and only if $u_{ij} \in \SSJ_w$.
\end{proposition}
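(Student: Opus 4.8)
The plan is to mimic the proof of Proposition~\ref{exerprop}, the corresponding statement for ordinary matrix Schubert varieties, adjusting each step to the skew-symmetric setting. The two forward implications are immediate. If $(i,j)\in\dom(w)$ and $i>j$, then $\rank M_{[i][j]}=\rank w_{[i][j]}=0$ for every $M\in\SSX_w$, so $M_{ij}=0$ and hence $u_{ij}\in I(\SSX_w)$; and $u_{ij}\in\SSJ_w$ because $u_{ij}=\ssx_{\{i\}\{j\}}$ occurs among the generators in \eqref{ssj-def}, taking $A=\{i\}$, $B=\{j\}$, which satisfy $i\geq j$ and $|A|=|B|=1=1+\rank w_{[i][j]}$. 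So the content is in the converse direction, which I would split into the two non-membership claims.

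The key combinatorial fact I would establish first is the following: \emph{if $(i,j)\notin\dom(w)$ and $i>j$, then $\rank w_{[k][l]}\geq 2$ for all $k,l\geq i$.} This is where the hypothesis $w\in\Ifpf$ is used essentially. Since $(i,j)\notin\dom(w)$, there is some $a\leq i$ with $b:=w(a)\leq j$; because $w$ is a fixed-point-free involution we have $w(b)=a$ and $a\neq b$. Then $a,b\in[i]\subseteq[k]$ and $w(a),w(b)\leq i\leq l$, so $a$ and $b$ contribute two distinct nonzero entries to $w_{[k][l]}$, giving the claim. (Recall that ranks of submatrices of a monomial matrix are unaffected by the signs of its entries, so this computation may be carried out with the $\{0,1\}$-permutation matrix of $w$.)

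To show $u_{ij}\notin I(\SSX_w)$ when $(i,j)\notin\dom(w)$ and $i>j$, I would exhibit a point of $\SSX_w$ not killed by $u_{ij}$: the skew-symmetric matrix $M=E_{ij}-E_{ji}\in\SSMat_{n\times n}$ with a $1$ in position $(i,j)$, a $-1$ in position $(j,i)$, and zeros elsewhere, which has $u_{ij}(M)=1\neq 0$. Since $M$ has exactly two nonzero entries, lying in distinct rows and columns, $\rank M_{[k][l]}$ equals the number of the two positions $(i,j),(j,i)$ contained in $[k]\times[l]$, hence is $0$, $1$, or $2$. The rank-$2$ case forces $k,l\geq i$ and is covered by the claim above. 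Each rank-$1$ case excludes one of the two positions via an inequality $l<i$ or $k<i$; unwinding this and once more invoking that $w$ is an involution (so that the pair $a\leftrightarrow b$ produced from $(i,j)\notin\dom(w)$ contributes an entry of $w_{[k][l]}$ in the relevant corner) yields $\rank w_{[k][l]}\geq 1$. I expect this four-way inequality chase to be the most tedious part of the write-up, although none of it is subtle.

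Finally, for $u_{ij}\notin\SSJ_w$ when $(i,j)\notin\dom(w)$ and $i>j$: the ideal $\SSJ_w$ is a monomial ideal whose nonzero generators $\ssx_{AB}$ are nonconstant square-free monomials (one quickly checks $A\boxplus B\neq\varnothing$), so $u_{ij}\in\SSJ_w$ would force $\ssx_{AB}=u_{ij}$ for some pair $(A,B)$ valid for \eqref{ssj-def}. By Lemma~\ref{ssx-var-lem} this forces $A=B=\{i,j\}$ or $\{A,B\}=\{\{i\},\{j\}\}$. In the first case validity requires indices $p\geq q$ with $\{i,j\}\subseteq[p]\cap[q]$ and $\rank w_{[p][q]}=1$; but $i>j$ gives $p,q\geq i$, contradicting the claim that $\rank w_{[p][q]}\geq 2$. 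In the second case validity requires $p\geq q$ with $A\subseteq[p]$, $B\subseteq[q]$ and $\rank w_{[p][q]}=0$, i.e.\ $(p,q)\in\dom(w)$; in either arrangement of $\{i\},\{j\}$ one gets $p\geq i$ and $q\geq j$ (using $p\geq q$ to handle the case $A=\{j\}$), and since $\dom(w)=\D_\lambda$ is a Young diagram, hence closed under decreasing each coordinate, $(p,q)\in\dom(w)$ would put $(i,j)\in\dom(w)$ — a contradiction. This finishes both non-membership claims.
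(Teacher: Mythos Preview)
Your proposal is correct and follows essentially the same route as the paper's proof: establish the forward implications trivially, then for $(i,j)\notin\dom(w)$ show $E_{ij}-E_{ji}\in\SSX_w$ via rank bounds and rule out $u_{ij}\in\SSJ_w$ using Lemma~\ref{ssx-var-lem}. The only notable difference is packaging: the paper compresses your ``tedious four-way inequality chase'' into two lines by invoking the symmetry $\rank w_{[k][l]}=\rank w_{[l][k]}$ and monotonicity to get $\rank w_{[k][l]}\geq\rank w_{[i][j]}\geq 1$ whenever $k\geq i$, $l\geq j$ (and $\rank w_{[k][l]}\geq\rank w_{[i][i]}\geq 2$ when $\min\{k,l\}\geq i$), and in the singleton case it contradicts $\rank w_{[p][q]}=0$ directly via this rank bound rather than appealing to the Young-diagram shape of $\dom(w)$.
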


\begin{proof}
Our argument is only slightly more complicated than the proof of Proposition~\ref{exerprop}.
First, assume $(i,j) \in \dom(w)$. Then $\rank M_{[i][j]}=\rank w_{[i][j]}=0$
and so
$M_{ij}=0$ for all $M \in \SSX_w$, which means that $u_{ij} \in I(\SSX_w)$. We also have  $u_{ij}  \in \SSJ_w$ 
since $u_{ij} = \ssx_{AB}$ 
 for $A = \{i\}$ and $B=\{j\}$.

Suppose conversely that $(i,j) \notin \dom(w)$.
As the partial permutation matrix of $w$ is symmetric with zeros on the diagonal,
it follows that 
 $\rank w_{[k][l]}=\rank w_{[l][k]}\geq \rank w_{[i][j]} \geq 1$ whenever $n\geq k\geq i$ and $n\geq l \geq j$, and also that  $\rank w_{[k][l]}\geq \rank w_{[i][i]} \geq 2$ whenever $ \min\{k,l\}\geq i$.
One concludes that $\SSX_w= X_w \cap \SSMat_{n\times n}$ contains the matrix $E_{ij} - E_{ji}$ so $u_{ij} \notin I(\SSX_w)$.

To show that $u_{ij} \notin \SSJ_w$, we just need to check that $u_{ij}$ does not equal any of the terms $\ssx_{AB}$ generating $\SSJ_w$ in \eqref{ssj-def}.  
This follows from Lemma~\ref{ssx-var-lem}, as if $A\subseteq [k]$ and $B\subseteq [l]$
have $A=B=\{i,j\}$,
then $\min\{k,l\}\geq i$ so $\rank w_{[k][l]} \geq 2$, while if $\{A,B\}= \{\{i\},\{j\}\}$
then similarly $\rank w_{[k][l]}\geq 1$.
\end{proof}

Following \cite[\S4.1]{HMP3}, we define the \defn{fpf-involution length} of   $w \in \Ifpf$ to be
\be \ellfpf(w) := |\{ (i,j) \in \PP\times \PP : w(i) > w(j) < i <j\}|.\ee
Write $u \lessdot_\fpf v$ for $u,v \in \Ifpf$ if   $v=t_{ij}\cdot u\cdot t_{ij}$ for any positive integers $i<j$ with $\ellfpf(v) = \ellfpf(u)+1$.
For a description of $\lessdot_\fpf$ 
in terms of the cycles of $u$ and $v$, see \cite[Prop.~4.9]{HMP3}.

Suppose $0^\ss_{n\times n} \neq w \in \Ifpf^{(n)}$ so that $I(\SSX_w)$ is not a maximal ideal and $\SSX_w \neq \{0\}$.
Then  $\dom(w)$, which is invariant under transpose, cannot contain every position in  $[n]\times [n]$,
so there must exist an outer corner $(p,q)$ of $\dom(w)$ with $n \geq p\geq q \geq 1$. 
As explained in \cite[\S4.1]{MP2022}, all outer corners $(p,q)$ of $\dom(w)$ have $w(p)=q$, so as $w$ has no fixed points we can assume that $p> q$.

In summary, we can
choose an outer corner $(p,q)$ of $\dom(w)$ with $n\geq p>q\geq1$. 
Make such a choice (or to be canonical, let $(p,q)$ be the lexicographically minimal choice) and then set
\be\label{ss-msv-f-eq}
f= u_{pq} \in \KK[\SSMat_{m\times n}].
\ee
Since $(p,q) \notin \dom(w)$, we have $f \notin I(\SSX_w)$ by Proposition~\ref{ss-exerprop}.
Next, define the set 
\be
\cC^\ss(w) = \{ t_{pr}\cdot  w\cdot t_{pr} \in \Ifpf : p < r \in \PP\text{ and } w \lessdot_\fpf t_{pr}\cdot w\cdot t_{pr} \}.
\ee
It is known that $\varnothing \subsetneq \cC^\ss(w) \subseteq \Ifpf^{(n)}$ by \cite[Lem.~4.8]{MP2022}. Finally, let 
\be\label{ss-msv-phi-eq}
\Phi = \{ ( I(\SSX_v), \SSJ_v) : v \in \cC^\ss(w)\}.
\ee

\begin{proof}[Proof of Theorem~\ref{ss-msv-thm}]
The claim that $\SSJ_w \subseteq \init(I(\SSX_w))$
holds by \cite[Lem~3.37 and Thm.~3.17]{MP2022}. 
The other parts of axioms (T1) and (T2) in Definition~\ref{ts-def} were checked in  Sections~\ref{ss-msv-fin-sect} and \ref{ss-msv-non-sect}.

Fix $ w \in \Ifpf^{(n)}$ with $I(\SSX_w)$ not maximal, and then choose an outer corner $(p,q)$ of $\dom(w)$ with $n\geq p > q \geq 1$.
The identities required for axiom (T3) when $(I,J) = (I(\SSX_w),\SSJ_w)$ are supplied in \cite[Lem.~4.2]{MP2022}
if we define
 $f= u_{pq}\notin I(\SSX_w)$ and $\Phi$ as in \eqref{ss-msv-f-eq} and \eqref{ss-msv-phi-eq}.

It will turn out that axiom (T4) holds vacuously as
 $(I(\SSX_w) \colon \langle f \rangle) =I(\SSX_w)$.
To show this, we examine the group action of $G=\B_n$ on $A=\SSMat_{n\times n}$.
If we identify 
 \[\KK[\B_n] = \KK\left[b_{ij}, b_{ii}^{-1}: 1\leq j\leq i \leq n\right]\] where  $b_{ij}(g) = g_{ij}$,
then the map  $\alpha^\ast $  
from \eqref{pullback-eq}
has the formula
\be
\alpha^\ast(u_{ij})=  \sum_{\substack{p,q \in [n] \\ 1\leq q \leq j < p \leq i}}  b_{ip} b_{jq} \otimes u_{pq} + \sum_{\substack{p,q \in [n] \\ 1\leq q < p \leq j<i}}  (b_{ip} b_{jq}-b_{iq} b_{jp}) \otimes u_{pq}
\quad\text{for $1\leq j<i\leq n$.}\ee

The hypothesis of Lemma~\ref{tlem1}
holds for the partial order $\preceq$ on   $\cN:=\{ (i,j) \in [n]\times [n] : i>j\}$
with $(i,j) \preceq (k,l)$ if both $i\leq k$ and $j \leq l$. 
Since $(p,q)$ is an outer corner of $\dom(w)$, we have 
 $(i,j) \in \dom(w)$ for all $(i,j) \in \cN$ with $(i,j) \prec (p,q)$,
and so $u_{ij} \in I(\SSX_w)$ for all  $(i,j) \in \cN$ with $(i,j) \prec (p,q)$ by Proposition~\ref{ss-exerprop}.
Thus, referring to \eqref{ss-msv-m-eq},
we conclude by Lemma~\ref{tlem1}(a) that $u_{pq} \notin \SScM_w$ since $u_{pq} \notin I(\SSX_w)$. Finally, from
Lemmas~\ref{tlem1}(b) and \ref{tlem2}, we get 
\[(I(\SSX_w) \colon \langle f\rangle) = (\cl_{\B_n}(\SScM_w) \colon \langle u_{pq}\rangle) =  \cl_{\B_n}((\SScM_w \colon \langle u_{pq}\rangle ))
=   \cl_{\B_n}(\SScM_w )= I(\SSX_w)\]
as promised, and so axiom (T4) is automatically satisfied.
\end{proof}

 Theorem~\ref{ss-msv-thm} immediately recovers the following   property from \cite[Thm.~4.3]{MP2022}.

 \begin{corollary}[\cite{MP2022}] 
If $w \in \SSMat_{n\times n}$ 
then $\SSJ_w = \init(I(\SSX_w))$.
\end{corollary}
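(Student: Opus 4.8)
The plan is to obtain this statement as an immediate consequence of the general machinery of Section~\ref{ts-sect} combined with Theorem~\ref{ss-msv-thm}. Concretely, Theorem~\ref{ss-msv-thm} asserts that $\SScF = \{(I(\SSX_w),\SSJ_w) : w \in \SSMat_{n\times n}\}$ is a transition system in the sense of Definition~\ref{ts-def}. Theorem~\ref{new-ts-thm} then applies directly with $\cT = \SScF$ and yields $\init(I) = J$ for every pair $(I,J) \in \SScF$. Specializing to the pair $(I(\SSX_w),\SSJ_w)$ gives $\init(I(\SSX_w)) = \SSJ_w$, which is exactly the claim.

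There is a small amount of bookkeeping worth isolating, though it does not affect the argument: the collection $\SScF$ is finite even though it is indexed by the infinite variety $\SSMat_{n\times n}$, because both $I(\SSX_w)$ and $\SSJ_w$ depend only on the $\B_n$-orbit of $w$ (using $\SSX_{gwg^\top} = \SSX_w$ and $\SSJ_{gwg^\top} = \SSJ_w$ from Section~\ref{ss-msv-fin-sect}), and each such orbit meets $\{\ss_n(v) : v \in \Ifpf^{(n)}\}$ in a single point. Thus the equality $\init(I(\SSX_w)) = \SSJ_w$ for an arbitrary $w \in \SSMat_{n\times n}$ reduces to the same equality ranging over the uniquely indexed finite set $\Ifpf^{(n)}$, which is how $\SScF$ is presented in Section~\ref{ss-msv-fin-sect}.

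Since Theorem~\ref{new-ts-thm} supplies all the logical content, there is no genuine obstacle remaining in the proof of the corollary itself; the entire difficulty has been pushed into the verification that $\SScF$ is a transition system. For the record, the nontrivial inputs there are the containment $\SSJ_w \subseteq \init(I(\SSX_w))$, which is not evident from the definition of the monomial ideal $\SSJ_w$ in \eqref{ssj-def} and is provided by \cite[Lem~3.37 and Thm.~3.17]{MP2022}, and the transition recurrence of axiom (T3) relating $I(\SSX_w) + \langle u_{pq}\rangle$ and $\SSJ_w + \langle u_{pq}\rangle$ to the intersections over $\cC^\ss(w)$, which comes from \cite[Lem.~4.2]{MP2022}; axiom (T4) is automatic because $\bigl(I(\SSX_w) \colon \langle u_{pq}\rangle\bigr) = I(\SSX_w)$, established via Lemmas~\ref{tlem1} and \ref{tlem2} in the proof of Theorem~\ref{ss-msv-thm}.
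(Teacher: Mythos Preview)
Your proof is correct and matches the paper's approach exactly: the corollary is obtained immediately by applying Theorem~\ref{new-ts-thm} to the transition system $\SScF$ furnished by Theorem~\ref{ss-msv-thm}. The additional bookkeeping you include about the finite indexing set and the ingredients behind Theorem~\ref{ss-msv-thm} is accurate but not needed for the corollary itself.
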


So far we have avoided defining a skew-symmetric analogue of the ideal $I_w$ from \eqref{msv-minors-eq}.
Such an ideal $\SSI_w \subseteq \KK[\SSMat_{n\times n}]$ is introduced for each $w \in  \Ifpf^{(n)}$ in 
\cite[Def.~3.13]{MP2022}. We omit an explicit description here, 
except to state that this ideal $\SSI_w$ can be generated a set of \defn{Pfaffians} of certain submatrices of the skew-symmetric matrix of variables $\SScX = \begin{bsmallmatrix} 0 & -u_{ij}  \\ u_{ij} & 0 \end{bsmallmatrix}$.

The simplest Pfaffian generating set for $\SSI_w$ given in \cite[Def.~3.13]{MP2022} is not a Gr\"obner basis \cite[Ex.~3.28]{MP2022}. However,
there is a more complicated set of Pfaffians  $G_w^\ss\subseteq \SSI_w$ described in \cite[Thm.~4.6]{MP2022}
for which it turns out that $\SSJ_w \subseteq \langle \init(g) : g \in G_w^\ss \rangle$ \cite[Lems.~3.35 and 3.36]{MP2022}.

It can be shown that $\SSX_w$ is the zero locus of $\SSI_w$ and so $  G_w^\ss \subseteq \SSI_w \subseteq I(\SSX_w)$ \cite[Thm.~3.17]{MP2022}. 
Therefore,   Corollary~\ref{ts-gr-cor} implies that  $G_w^\ss$ is a Gr\"obner basis for $I(\SSX_w)$, 
so $\SSI_w = \langle G_w^\ss \rangle = I(\SSX_w)$ is prime (as $\SSX_w$ is irreducible) and $\SSJ_w = \init(\SSI_w)$. 
These facts recover \cite[Thms.~4.5 and 4.6]{MP2022}.

\section{Symmetric matrix Schubert varieties}\label{schumat-sect3}

Replacing skew-symmetry $M=-M^\top$ by transpose invariance $M=M^\top$ yields symmetric versions of the objects in the previous two sections.
This section presents some conjectures about these objects, generalizing Example~\ref{22sym-ex}.
For $n \in \PP$ 
write
$ \SymMat_{n\times n}$ for the affine variety of symmetric $n\times n$ matrices over our arbitrary algebraically closed field $\KK$.

\begin{definition}\label{SymX-def}
For each symmetric matrix $w \in \SymMat_{n\times n}$, the corresponding 
  \defn{symmetric matrix Schubert cell}
  and  \defn{symmetric matrix Schubert variety}
are the respective subsets
   \be \begin{aligned}
    \openSymX_w &= \left\{ M \in \SymMat_{n\times n} : \text{$\rank M_{[i][j]} = \rank w_{[i][j]}$ for all $(i,j) \in [n]\times [n]$}\right\},
    \\
    \SymX_w &= \left\{ M \in \SymMat_{n\times n} : \text{$\rank M_{[i][j]} \leq \rank w_{[i][j]}$ for all $(i,j) \in[n]\times [n]$}\right\},
\end{aligned}
\ee
where as usual $M_{[i][j]}$ stands for the upper left $i\times j$ submatrix of $M$. 
\end{definition}

As with $\SSX_w$, the varieties $\SymX_w$ include many familiar classes of symmetric matrices,
whose ideals have been previously studied (see, e.g., \cite{Gorla,Kutz}). Questions about the Gr\"obner geometry of  symmetric matrix Schubert varieties are less well-understood.
We know of only a few relevant references \cite{Conca,Conca2,DeKr,GMN}, which supply answers in special cases.

\begin{remark*}
Ideals and Gr\"obner bases for a different collection of ``symmetric matrix Schubert varieties'' are considered in 
\cite{EFRW,FRS}.
The varieties in these references are defined by imposing northeast rank conditions,
while $\SymX_w$ is defined via northwest rank conditions. The two families are not generally related in any simple way.
\end{remark*}

We identify the coordinate ring of $\SymMat_{n\times n}$ with  
$\KK[\SymMat_{n\times n}] = \KK[u_{ij} : 1\leq j\leq i\leq n]$
where $u_{ij}$ represents the linear function on matrices with $u_{ij}(M) = M_{ij} \in \KK$.
We continue to order the monomials in this ring using the reverse lexicographic term order explained in Remark~\ref{rlto-rem}.
(Note that this is not the term order considered in \cite{Conca}.)

It is an open problem to determine the initial ideals and associated Gr\"obner bases
for $I(\SSX_w)$. Computations support a plausible conjecture.
Below, we discuss this conjecture along with a speculative proof strategy using transition systems. Turning this approach into a detailed proof will require new ideas to overcome obstacles that did not arise in Sections~\ref{schumat-sect1} or \ref{schumat-sect2}.

\subsection{An incomplete transition system}

Fix an element  $w \in \SymMat_{n\times n}$.
At least when $\ch(\KK) \neq2$, the variety $\SymX_w$ is  irreducible and  equal to the Zariski closure of  $\openSymX_w$ \cite[Lem.~5.2]{BagnoCherniavsky}. The reference \cite{BagnoCherniavsky} works over  $\CC$, but the relevant arguments hold over any algebraically closed field with $\ch(\KK)\neq2$. 

\begin{remark}
The lower-triangular Borel subgroup $ \B_n$ acts algebraically on $ \SymMat_{n\times n}$ by 
the same formula as in the skew-symmetric case: $ g\cdot M = g M g^{\top}$.
When $\ch(\KK)\neq 2$, the matrix Schubert cell
 $\openSymX_w$ is  the orbit of $w$ under this action,
and so in the notation of Section~\ref{orbit-sect}, we have
\be\label{sym-msv-m-eq} I(\SymX_w) = \cl_{\B_n}(\SymcM_w)
\quad\text{for the maximal ideal }
\SymcM_w := \langle u_{ij} - w_{ij} : 1\leq j \leq i \leq n\rangle.\ee
However, if $\ch(\KK)=2$ then $\openSymX_w$ may be a union of multiple of $\B_n$-orbits.
\end{remark}

Define $\SymcX$
to be the $n\times n$ symmetric matrix with entries $\SymcX_{ij}  = u_{ij}=\SymcX_{ji}$ for all $i\geq j$.
Similar to our two previous cases, if we consider the ideal generated by minors 
 \be\label{sym-msv-minors-eq}
\SymI_w:= \left\langle \det(\SymcX_{RC}) : i \in [m], \ j \in [n],\ R \subseteq [i],\ C\subseteq[j],\ |R|=|C|=1+\rank w_{[i][j]} \right\rangle
\ee
and also define 
\be\label{sym-msv-minors-eq2} \SymJ_w := \left\langle \adiag(\SymcX_{RC}) : i \in [m], \ j \in [n],\ R \subseteq [i],\ C\subseteq[j],\ |R|=|C|=1+\rank w_{[i][j]} \right\rangle\ee
then it automatically holds that
\be\label{sym-msv-minors-eq3}
\SymI_w \subseteq I(\SymX_w) \quand  \SymJ_w \subseteq \init(\SymI_w)\subseteq \init(I(\SymX_w))\ee relative to the reverse lexicographic term order.

 We expect that the  two containments in \eqref{sym-msv-minors-eq3} are actually both equalities (see Conjectures~\ref{sym-conj3a} and \ref{sym-conj3b}),
 just as in the ordinary matrix Schubert case. However, this cannot be shown by proving the most obvious symmetric reformulation of Theorem~\ref{msv-thm}, since the set 
\be\label{symf-eq} \{(I(\SymX_w),\SymJ_w) : w \in \SymMat_{n\times n}\}\ee
is generally  too small to be a transition system (or even a partial transition system), as we have already seen in Example~\ref{22sym-ex}.
Nevertheless, computations support the following conjecture:

\begin{conjecture}\label{sym-conj}
There exists a transition system containing 
$ \{(I(\SymX_w),\SymJ_w) : w \in \SymMat_{n\times n}\}$.
\end{conjecture}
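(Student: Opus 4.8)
The plan is to build the desired transition system by a \emph{saturation procedure} that starts from $\cT_0=\{(I(\SymX_w),\SymJ_w):w\in\SymMat_{n\times n}\}$ and repeatedly adjoins the new pairs forced by Definition~\ref{ts-def}, exactly as in the proofs of Theorems~\ref{msv-thm} and~\ref{ss-msv-thm}, except that the family of ideals is now allowed to grow beyond orbit closures of maximal ideals. Given a pair $(I,J)$ in the current set with $I=I(\SymX_w)$ neither maximal nor monomial, one chooses an outer corner $(p,q)$ of $\dom(w)$ with $p\geq q$ --- such a corner exists whenever $\SymX_w\neq\{0\}$, by the argument in Section~\ref{ss-msv-non-sect} --- and sets $f=u_{pq}$, which satisfies $f\notin I$ by the symmetric analogue of Proposition~\ref{ss-exerprop}. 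The heart of the construction is a \emph{structural decomposition}
\[
I+\langle u_{pq}\rangle=P_1\cap\cdots\cap P_k
\qquad\text{with}\qquad
\init\big(I+\langle u_{pq}\rangle\big)=\init(P_1)\cap\cdots\cap\init(P_k),
\]
in which each $P_r$ is a proper ideal with an explicitly describable initial ideal $Q_r:=\init(P_r)$. One then sets $\cT_1=\cT_0\cup\{(P_r,Q_r)\}\cup\{((I\colon\langle u_{pq}\rangle),(J\colon\langle u_{pq}\rangle))\}$, the last pair being included only when the quotient differs from $I$, repeats the process on every newly introduced pair, and takes $\cT$ to be the union of the resulting chain. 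Since $\KK[\SymMat_{n\times n}]$ is Noetherian and every ideal produced this way strictly contains the ideal it came from, the recursion is well-founded and Theorem~\ref{new-ts-thm} applies to the limit even if $\cT$ is infinite; finiteness of $\cT$ would be a welcome bonus but is not needed for the conjecture.

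Several features of the orbit-closure machinery of Section~\ref{orbit-sect} should survive this enlargement and make the verification of axioms (T1)--(T4) tractable. Because $\alpha^\ast(u_{pq})$ is triangular with respect to the partial order $(i,j)\preceq(k,l)\iff i\leq k$ and $j\leq l$ on $\cN$, and because every $(i,j)\prec(p,q)$ lies in $\dom(w)$, the ideal $I(\SymX_w)+\langle u_{pq}\rangle$ is $\B_n$-stable, and the same should hold for each $P_r$ relative to its own dominant-type set of variables; passing to a primary decomposition, each $P_r$ is then the orbit closure of a primary ideal (recall that $\cl_{\B_n}$ preserves primary ideals and preserves radicality, so the non-reducedness in Example~\ref{22sym-ex} is genuinely unavoidable). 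Consequently Lemmas~\ref{tlem1} and~\ref{tlem2} remain available, and one expects (T4) to hold essentially by default: for the radical pairs $I(\SymX_w)$ it is the statement that the irreducible variety $\SymX_w$ is not contained in the hyperplane $u_{pq}=0$ (valid when $\ch(\KK)\neq2$ by \cite{BagnoCherniavsky}, with the characteristic-two case requiring separate treatment since then $\openSymX_w$ need not be a single $\B_n$-orbit), and for the new primary pairs it should reduce via Lemma~\ref{tlem1}(b) to a computation with maximal ideals.

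The main obstacle is the structural decomposition itself: identifying the ideals $P_r$, showing that they reconstruct $I+\langle u_{pq}\rangle$ on the nose, and pinning down the $Q_r$ precisely enough to verify the monomial-side identity $\SymJ_w+\langle u_{pq}\rangle=\bigcap_r Q_r$ required by (T3). In the classical case this decomposition is underwritten by Ramanathan's theorem \cite{Ramanathan}, which forces the relevant scheme intersection to be a reduced union of matrix Schubert varieties, and in the skew-symmetric case it follows from explicit Pfaffian identities in \cite{MP2022}; neither input is available here. Example~\ref{22sym-ex} already shows that $I(\SymX_w)+\langle u_{pq}\rangle$ can be non-reduced, so there is no purely geometric shortcut, and the primary components that arise appear to belong to a new combinatorial family of ideals (with a correspondingly new family of antidiagonal monomial ideals) that has not yet been isolated. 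A secondary difficulty is to ensure that the saturation does not escape into an uncontrollable collection of ideals; our hope, supported by the single extra ideal $\langle u_{11},u_{21}^2\rangle$ appearing in Example~\ref{22sym-ex}, is that the new ideals are indexed by mild combinatorial data --- a symmetric partial permutation together with a staircase-shaped set of frozen positions, say --- so that the whole system admits a uniform description.

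If this program succeeds, Theorem~\ref{new-ts-thm} and Corollary~\ref{ts-gr-cor} immediately yield $\init(I(\SymX_w))=\SymJ_w$ and $\SymI_w=I(\SymX_w)$, hence that $\SymI_w$ is prime and that its defining minors form a reverse-lexicographic Gr\"obner basis, recovering Conjectures~\ref{sym-conj3a} and~\ref{sym-conj3b}. These consequences have been checked by computer for $n\leq7$ when $\ch(\KK)=0$, which is our principal evidence that the required transition system exists.
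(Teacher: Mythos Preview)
This statement is a \emph{conjecture} in the paper, not a theorem: the paper offers no proof, only computational evidence (explicit transition forests for $n\leq 4$ in Section~\ref{tf-sect}, and verification of the downstream Conjectures~\ref{sym-conj3a}--\ref{sym-conj4} for larger $n$). Your proposal is likewise not a proof but a strategy outline, and you say so yourself: the phrases ``The main obstacle is the structural decomposition itself'' and ``If this program succeeds'' make clear that the key step---producing ideals $P_r$ with $I+\langle u_{pq}\rangle=\bigcap_r P_r$ and $\SymJ_w+\langle u_{pq}\rangle=\bigcap_r Q_r$---is left entirely open. So there is no proof here to compare, on either side.

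That said, your saturation procedure is essentially the same idea the paper formalizes in Section~\ref{tf-sect} under the name \emph{transition forests}: start from $\cT_0$, repeatedly take primary decompositions of $I+\langle u_I\rangle$, and adjoin the resulting components. The paper runs this algorithm by computer for small $n$ and observes that it terminates in a transition system, but explicitly flags that it is ``not guaranteed that this process will yield a family satisfying every part of Definition~\ref{ts-def}.'' Your write-up correctly isolates the two genuine obstructions the paper also highlights: (i) the intersections $I(\SymX_w)+\langle u_{pq}\rangle$ are non-reduced (Example~\ref{22sym-ex}) and can have embedded primes (Example~\ref{sym3-ex}), so there is no clean analogue of Ramanathan's theorem or of the Pfaffian identities from \cite{MP2022}; and (ii) one has no combinatorial description of the new primary ideals that appear, so one cannot verify the monomial identity in (T3) uniformly. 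Until those are resolved, the conjecture remains open.
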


As observed  in Example~\ref{22sym-ex}, the transition systems realizing Conjecture~\ref{sym-conj}
will have to include non-radical ideals of $\KK[\SymMat_{n\times n}]$.
We expect that these systems will still be finite, however.

One can see that the set $ \{(I(\SymX_w),\SymJ_w) : w \in \SymMat_{n\times n}\}$ is itself finite by 
noting that the set is unchanged when we restrict $w$ to range over all symmetric $n\times n$ partial permutation matrices,
since these matrices give rise to all possible 
 rank tables for elements of $\SymMat_{n\times n}$.
 The latter claim can be deduced (when $\KK$ is an arbitrary field) from \cite[Lem.~3.20]{MP2020},
 which shows that the rank table of any matrix coincides with the rank table of some partial permutation matrix,
 and the latter must be symmetric for its rank table to be symmetric.

We will index the symmetric $n\times n$ partial permutation matrices 
by setting
 \be\label{II-eq} \I_\infty:= \{ w \in S_\infty : w=w^{-1} \}
\quand \I_\infty^{(n)} := \{ w \in \I_\infty : \DesR(w)\subseteq [n]\} = \I_\infty \cap S_\infty^{(n,n)}.\ee
Passing to the $n\times n$ partial permutation matrix gives a bijection from  $\I_\infty^{(n)} $
to symmetric $n\times n$ partial permutation matrices. This lets us define $\SymX_w$, $\SymI_w$, and $\SymJ_w$ for $w \in  \I_\infty^{(n)}$. Then 
\be
\left\{(I(\SymX_w),\SymJ_w) : w \in \SymMat_{n\times n}\right\} = \left\{(I(\SymX_w),\SymJ_w) : w \in \I_\infty^{(n)} \right\},
\ee
and the second set is uniquely indexed
since if $v,w \in   \I_\infty^{(n)}$ then 
$I(\SymX_v) \subseteq I(\SymX_w)$ 
 if and only if $v\leq w$ in the Bruhat order of $S_\infty$ by \cite[Lem.~3.5]{BagnoCherniavsky}.

 \begin{example}
 The ideal $I(\SymX_w)$ is maximal if and only if $\SymX_w$ is a point, which occurs only when
$\SymX_w = \{0\}$.
The index $w \in \I_\infty^{(n)}$ for this case is $0_{n\times n} = (1, n+1)(2, n+2)\cdots(n,2n)$, 
and 
we have
$I(\SymX_{0_{n\times n}}) = \SymI_{0_{n\times n}}= \SymJ_{0_{n\times n}} = \init(I(\SymX_{0_{n\times n}})) = \langle u_{ij} : i,j\in [n],\ i\geq j\rangle.$
\end{example}
 
We mention some interesting consequences
of Conjecture~\ref{sym-conj}. First, via Theorem~\ref{new-ts-thm}, the conjecture would immediately imply that $\SymJ_w$ is the 
initial ideal of $I(\SymX_w)$. Then it would follow by Corollary~\ref{ts-gr-cor} that  the minors listed in \eqref{sym-msv-minors-eq}
form a Gr\"obner basis for $I(\SymX_w)$.
As these minors already generate $\SymI_w$,  this would prove  the following:

 \begin{conjecture}\label{sym-conj3a}
If $w \in \SymMat_{n\times n}$ then $\SymI_w=I(\SymX_w)$ and $\SymJ_w=\init(I(\SymX_w))$.
\end{conjecture}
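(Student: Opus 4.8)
The plan is to deduce Conjecture~\ref{sym-conj3a} from Conjecture~\ref{sym-conj}, exactly the way Corollary~\ref{msv-cor} was deduced from Theorem~\ref{msv-thm}. Suppose $\cT$ is a transition system containing every pair $(I(\SymX_w),\SymJ_w)$ with $w \in I_\infty^{(n)}$. Then Theorem~\ref{new-ts-thm} gives $\init(I(\SymX_w)) = \SymJ_w$, and combining this with the containments $\SymJ_w \subseteq \init(\SymI_w) \subseteq \init(I(\SymX_w))$ from \eqref{sym-msv-minors-eq3} forces $\init(\SymI_w) = \init(I(\SymX_w)) = \SymJ_w$; since also $\SymI_w \subseteq I(\SymX_w)$, Proposition~\ref{prop:init-facts}(b) yields $\SymI_w = I(\SymX_w)$. (Applying Corollary~\ref{ts-gr-cor} with $G_I$ taken to be the defining minors of $\SymI_w$ additionally shows that those minors form a Gr\"obner basis.) So everything reduces to constructing the transition system, that is, to proving Conjecture~\ref{sym-conj}.

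To build such a system I would follow the template of Sections~\ref{schumat-sect1} and \ref{schumat-sect2}. Assuming $\ch(\KK)\neq 2$, so that the orbit-closure description $I(\SymX_w) = \cl_{\B_n}(\SymcM_w)$ of \eqref{sym-msv-m-eq} is available, the $\B_n$-action $g\cdot M = g M g^\top$ makes $\alpha^*(u_{ij})$ triangular with respect to the partial order $(i,j)\preceq(k,l) \iff i\leq k$ and $j\leq l$ on the index set of variables. Then Lemmas~\ref{tlem1} and \ref{tlem2} should show that axiom (T4) holds vacuously, namely $(I(\SymX_w)\colon\langle u_{pq}\rangle) = I(\SymX_w)$ whenever $(p,q)$ is an outer corner of $\dom(w)$ with $p\geq q$ (the geometric argument via irreducibility of $\SymX_w$ works equally well). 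The whole difficulty is therefore concentrated in axiom (T3): one must enlarge the family $\{(I(\SymX_w),\SymJ_w)\}$ by further pairs $(I,J)$ --- necessarily including non-radical ideals $I$, by Example~\ref{22sym-ex} --- in such a way that, for $f = u_{pq}$, the scheme-theoretic sum $I(\SymX_w)+\langle u_{pq}\rangle$ decomposes as a finite intersection $\bigcap_{(P,Q)\in\Phi} P$ of ideals in the enlarged system, with the matching monomial identity $\SymJ_w+\langle u_{pq}\rangle = \bigcap_{(P,Q)\in\Phi} Q$.

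A natural candidate is to take the enlarged system to be the smallest set of pairs containing the irrelevant pair $(I(\SymX_{0_{n\times n}}),\SymJ_{0_{n\times n}})$ and closed under the two operations implicit in Definition~\ref{ts-def}: replacing a pair by the components arising in a transition recurrence for some $f = u_{pq}$, and by its ideal quotient at $u_{pq}$. Two things then need to be proved: (i) the process terminates, which should follow from Noetherianity since each new ideal strictly contains some $I(\SymX_w)+\langle u_{pq}\rangle$, together with a bound on which monomial ideals can occur; and (ii) at every stage the asserted decompositions actually hold. Step (ii) is the main obstacle. In the ordinary case the analogous identity came for free from Ramanathan's theorem \cite{Ramanathan}; in the skew-symmetric case it was established in \cite{MP2022} by explicit manipulation of Pfaffian generators; but for symmetric matrices there is no analogue of \cite{Ramanathan}, and the ideals in play are not generated by minors with squarefree leading terms --- the prototype obstruction being the square $u_{21}^2$ appearing in $\init\langle u_{21}^2 - u_{11}u_{22}\rangle$. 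Consequently the combinatorics governing the extra monomial generators (presumably non-squarefree relatives of the antidiagonal monomials $\adiag(\SymcX_{RC})$) and the matching algebraic identities on the ideal side will require genuinely new input; pinning down the correct extra ideals and proving the monomial-side equality $\SymJ_w+\langle u_{pq}\rangle = \bigcap Q$, which Example~\ref{22sym-ex} shows already fails for the naive choice of $\Phi$, is where essentially all the work will lie.
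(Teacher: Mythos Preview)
Your first paragraph is correct and matches the paper exactly: the paper also derives Conjecture~\ref{sym-conj3a} conditionally from Conjecture~\ref{sym-conj} via Theorem~\ref{new-ts-thm} and Corollary~\ref{ts-gr-cor}, just as you do (your use of Proposition~\ref{prop:init-facts}(b) in place of the Gr\"obner-basis formulation is an equivalent variant). But note that this is a \emph{conjecture}: the paper does not claim a proof, only this reduction. So there is nothing further to compare on that front.

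The remainder of your proposal is not a proof but a programme for attacking Conjecture~\ref{sym-conj}, and you correctly flag that axiom (T3) is where the real difficulty lies and that non-radical ideals must enter. This is consistent with what the paper says; in Section~\ref{tf-sect} the paper pursues essentially the same idea computationally (iteratively taking primary decompositions of $I+\langle u_I\rangle$ to grow a candidate system $\cT_\sI$), and verifies the conjecture this way only for $n\leq 4$. Your sketch does not supply the missing ingredient---a uniform description of the enlarged family of ideals and a proof of the monomial-side identity in (T3)---and neither does the paper. So your proposal is an accurate reading of the state of affairs, but it is not a proof, and you should not present it as one.
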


The next conjecture would also be a consequence:

  \begin{conjecture}\label{sym-conj3b}
If $w \in \SymMat_{n\times n}$ is any symmetric matrix then  $\init(\SymI_w) = \SymJ_w$ and
the set of minors generating $\SymI_w$ in \eqref{sym-msv-minors-eq}
is a Gr\"obner basis in the reverse lexicographic term order.
\end{conjecture}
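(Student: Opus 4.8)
The plan is to deduce Conjecture~\ref{sym-conj3b} (and Conjecture~\ref{sym-conj3a} along the way) from Conjecture~\ref{sym-conj} together with the formal machinery of Section~\ref{ts-sect}, exactly as Corollary~\ref{msv-cor} was deduced in the classical case. Suppose $\cT$ is a transition system containing every pair $(I(\SymX_w),\SymJ_w)$ with $w\in I_\infty^{(n)}$. Theorem~\ref{new-ts-thm} then gives $\init(I(\SymX_w)) = \SymJ_w$ for all such $w$. Combining this with the automatic containments $\SymJ_w \subseteq \init(\SymI_w)\subseteq\init(I(\SymX_w))$ from \eqref{sym-msv-minors-eq3} forces $\init(\SymI_w) = \SymJ_w = \init(I(\SymX_w))$. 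Since $\SymI_w \subseteq I(\SymX_w)$ and these ideals have the same initial ideal, Proposition~\ref{prop:init-facts}(b) yields $\SymI_w = I(\SymX_w)$, which is Conjecture~\ref{sym-conj3a}. Finally, the generating minors of $\SymI_w$ in \eqref{sym-msv-minors-eq} have leading terms $\adiag(\SymcX_{RC})$ that generate $\SymJ_w = \init(\SymI_w)$ by definition, so Corollary~\ref{cor:init-facts} (equivalently Corollary~\ref{ts-gr-cor}) shows this generating set is a Gr\"obner basis. This entire deduction is purely formal once the transition system exists.

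Consequently all the real content lies in proving Conjecture~\ref{sym-conj}, and this is where I expect the main obstacle. Following the template of Sections~\ref{schumat-sect1} and \ref{schumat-sect2}, one would start from the pairs $(I(\SymX_w),\SymJ_w)$, and for non-maximal $I(\SymX_w)$ choose an outer corner $(p,q)$ of $\dom(w)$ with $p\geq q$, set $f = u_{pq}$, and try to establish
\[
I(\SymX_w) + \langle u_{pq}\rangle = \bigcap_{v\in\Phi} P, \qquad \SymJ_w + \langle \init(u_{pq})\rangle = \bigcap_{v\in\Phi} Q
\]
for some explicit finite family $\Phi\subseteq\cT$. As Example~\ref{22sym-ex} shows, $\Phi$ cannot consist only of pairs $(I(\SymX_v),\SymJ_v)$: the sum $I(\SymX_w)+\langle f\rangle$ can be a non-radical ideal (such as $\langle u_{11},u_{21}^2\rangle$) that is not an intersection of vanishing ideals of symmetric matrix Schubert varieties. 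The crux of the problem is therefore to guess the right \emph{enlargement} of the family---a combinatorially indexed class of ``generalized symmetric matrix Schubert ideals'' that includes these non-radical ideals---and then to (i) prove the intersection identities (T3) for all pairs in the enlarged family, in both the polynomial ring and for the monomial counterparts, and (ii) handle axiom (T4), which is no longer automatic: for a new non-radical $I$ one may have $(I\colon\langle f\rangle)\neq I$, so one must compute these ideal quotients and show they remain in the family. This amounts to finding and proving a ``symmetric transition/Monk recurrence'' at the level of ideals; the combinatorics of involution Schubert polynomials and their Monk-type rules is a natural place to look for the correct indexing set.

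A secondary difficulty is that the geometric inputs available in the previous two cases are absent here. In Sections~\ref{schumat-sect1}--\ref{schumat-sect2}, identity (T3) for the varieties followed from irreducibility together with a reducedness theorem (Ramanathan's, or its skew-symmetric counterpart), and axiom (T4) was reduced to Lemma~\ref{tlem1} applied to the orbit-closure description $I(\SymX_w)=\cl_{\B_n}(\SymcM_w)$. The non-radical ideals entering the symmetric transition system are not orbit closures of maximal ideals, so Lemma~\ref{tlem1} does not apply to them and there is no known scheme-theoretic statement replacing Ramanathan's theorem. One would therefore likely have to prove the intersection identities (T3) by a direct argument with minors and the algebraic relations among them, and separately address the case $\ch(\KK)=2$, where $\openSymX_w$ need not be a single $\B_n$-orbit. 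This combinatorial verification of (T3) and (T4) for the enlarged family is where I expect the bulk of the work, and the genuinely new ideas, to be needed.
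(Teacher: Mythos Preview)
Your analysis is correct and matches the paper's own treatment exactly: the paper does not prove this statement but presents it as an open conjecture, explaining (in the paragraph preceding Conjectures~\ref{sym-conj3a} and \ref{sym-conj3b}) precisely the same formal deduction from Conjecture~\ref{sym-conj} via Theorem~\ref{new-ts-thm} and Corollary~\ref{ts-gr-cor} that you outline. Your discussion of the obstacles to proving Conjecture~\ref{sym-conj}---the need to enlarge the family with non-radical ideals, the failure of (T4) to be automatic for such ideals, and the absence of a Ramanathan-type reducedness result---also aligns with the paper's remarks in Section~\ref{schumat-sect3} and the examples in Section~\ref{tf-sect}.
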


Finally, as $\SymX_w$ is irreducible, Conjecture~\ref{sym-conj3a} would imply this last property:
 
  \begin{conjecture}\label{sym-conj4}
If $w \in \SymMat_{n\times n}$ is any symmetric matrix then $\SymI_w$ is a prime ideal of $\KK[\SymMat_{n\times n}]$.
\end{conjecture}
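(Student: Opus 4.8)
The plan is to reduce Conjecture~\ref{sym-conj4} to Conjecture~\ref{sym-conj}, so that all of the real work becomes the construction of a transition system. Suppose a transition system $\cT$ containing $\{(I(\SymX_w),\SymJ_w) : w \in \SymMat_{n\times n}\}$ exists. Then Theorem~\ref{new-ts-thm} gives $\init(I(\SymX_w)) = \SymJ_w$ for every symmetric $w$. The minors in \eqref{sym-msv-minors-eq} generating $\SymI_w$ lie in $I(\SymX_w)$ and have leading terms $\adiag(\SymcX_{RC})$, which are exactly the monomial generators of $\SymJ_w$; hence Corollary~\ref{ts-gr-cor}, applied with $G_I$ equal to this set of minors, shows that it is a Gr\"obner basis for $I(\SymX_w)$. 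In particular it generates, so $\SymI_w = I(\SymX_w)$, which is Conjecture~\ref{sym-conj3a} (and then Conjecture~\ref{sym-conj3b} as well). Finally, when $\ch(\KK)\neq 2$ the variety $\SymX_w$ is irreducible by \cite[Lem.~5.2]{BagnoCherniavsky}, so $I(\SymX_w)$ is prime and therefore $\SymI_w$ is prime. For $\ch(\KK)=2$ one would instead argue that the combinatorially defined ideal $\SymI_w$ is unchanged under base change (for instance by checking that its generating minors form a Gr\"obner basis whose leading-term ideal is characteristic-independent), thereby reducing to the characteristic-zero case.

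It remains to construct $\cT$. The strategy is to enlarge the family $\{(I(\SymX_w),\SymJ_w) : w \in I_\infty^{(n)}\}$ until the transition axioms close up. For a pair $(I,J)=(I(\SymX_w),\SymJ_w)$ with $I$ non-maximal, choose an outer corner $(p,q)$ of $\dom(w)$ with $p>q$ and set $f = u_{pq}$. As in the skew-symmetric case one has $I(\SymX_w) = \cl_{\B_n}(\SymcM_w)$, and combining the explicit formula for $\alpha^\ast$ with Lemmas~\ref{tlem1} and~\ref{tlem2} yields $(I(\SymX_w)\colon\langle u_{pq}\rangle) = I(\SymX_w)$, so axiom (T4) will hold vacuously. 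Axiom (T3) then asks for a nonempty $\Phi\subseteq \cT$ with $I + \langle u_{pq}\rangle \subseteq \bigcap_{(P,Q)\in\Phi} P$ and $J + \langle u_{pq}\rangle = \bigcap_{(P,Q)\in\Phi} Q$. In Sections~\ref{schumat-sect1} and~\ref{schumat-sect2} the analogous $\Phi$ was indexed by Bruhat covers and gave an honest equality of radical ideals; Example~\ref{22sym-ex} shows this breaks here, since $I + \langle u_{pq}\rangle$ can be strictly contained in the intersection of the ideals $I(\SymX_v)$ over the covers, with the discrepancy a squared variable. The natural remedy is to adjoin to $\cT$ the pair $(I + \langle u_{pq}\rangle,\ J + \langle u_{pq}\rangle)$ itself, and then all further pairs obtained by iterating the construction: at each stage pick an outer corner of the rank-degeneracy locus of the current ideal, adjoin that variable, and continue. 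Since $\KK[\SymMat_{n\times n}]$ is Noetherian and each adjunction strictly enlarges the ideal, this terminates, producing a finite candidate family; one must then check that it satisfies (T1)--(T4).

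The hard part is verifying axiom (T3) for the auxiliary, non-radical pairs, where the geometric input that made the classical and skew-symmetric arguments work is absent. In those cases the recurrence $I + \langle f\rangle = \bigcap_{v} I(\MX_v)$ ultimately rested on a reducedness statement of Ramanathan type; no such result is available for the symmetric varieties, so the plus-intersection identity on the ideal side must be established directly, presumably by producing explicit Gr\"obner bases or primary decompositions for the auxiliary ideals. This in turn demands a combinatorial model for the ideals obtained by repeatedly adjoining outer-corner variables to $I(\SymX_w)$: an analogue of the rank-table bookkeeping of Sections~\ref{schumat-sect1} and~\ref{schumat-sect2}, but now carrying the extra data of where squared generators appear, together with the matching monomial-ideal identity $J' + \langle \init(f')\rangle = \bigcap_{(P,Q)\in\Phi'} Q$. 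Pinning down exactly which non-radical ideals must be added, and proving that the enlarged family is closed under the transition recurrence, is the crux of the conjecture; the computations through $n\le 7$ strongly suggest that such a finite closure exists but do not prove it.
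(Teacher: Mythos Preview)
The statement is a \emph{conjecture}, and the paper does not prove it. What the paper does offer is precisely the reduction in your first paragraph: assuming Conjecture~\ref{sym-conj}, Theorem~\ref{new-ts-thm} and Corollary~\ref{ts-gr-cor} give $\SymI_w = I(\SymX_w)$ (Conjecture~\ref{sym-conj3a}), and then irreducibility of $\SymX_w$ yields primality. So on that level your proposal matches the paper's reasoning exactly, and you correctly identify that the entire content lies in Conjecture~\ref{sym-conj}, which remains open.

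Your remaining paragraphs are an honest sketch of a strategy for Conjecture~\ref{sym-conj}, not a proof, and you say as much in your final sentences. A few specific gaps are worth flagging. First, the identity $I(\SymX_w) = \cl_{\B_n}(\SymcM_w)$ that you invoke for axiom (T4) is only asserted in the paper when $\ch(\KK)\neq 2$; in characteristic $2$ the cell $\openSymX_w$ can split into several $\B_n$-orbits, so Lemmas~\ref{tlem1} and~\ref{tlem2} do not directly apply. Second, your choice of an outer corner $(p,q)$ with $p>q$ is not always available in the symmetric case: since $w$ may have fixed points, $\dom(w)$ can have outer corners on the diagonal, forcing $f = u_{pp}$, and the transition behavior there is part of what makes the symmetric case harder. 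Third, and most seriously, the proposal to simply adjoin $(I + \langle u_{pq}\rangle,\ J + \langle u_{pq}\rangle)$ and iterate does not by itself yield a transition system: axiom (T3) for the newly added pair requires a further decomposition with the \emph{equality} $J' + \langle \init(f')\rangle = \bigcap Q$, and there is no mechanism in your sketch guaranteeing such equalities hold for the auxiliary, non-radical ideals. The paper's computer experiments in Section~\ref{tf-sect} build $\Phi$ from primary decompositions rather than from naive sums, precisely because the sum alone need not satisfy (T3). Your base-change argument for characteristic $2$ is also only a gesture; making it rigorous would itself require knowing the Gr\"obner basis statement you are trying to prove.
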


With some computer assistance, we can directly construct transition systems verifying 
Conjecture~\ref{sym-conj} for $n\leq 4$. The relevant calculations are explained in Section~\ref{ts-sect}.
Using the computer algebra system  {\tt Macaulay2}, we have also been able to check that Conjecture~\ref{sym-conj3b} 
holds when $n\leq 7$ for any choice of field $\KK$.
We can also computationally verify
Conjecture~\ref{sym-conj4} 
when $n\leq 5$ and $\KK$ is any of the (not algebraically closed) fields $\QQ$, $\FF_2$, or $\FF_3$.

One application of the Gr\"obner basis computations
for matrix Schubert varieties and their skew-symmetric counterparts is to determine primary decompositions
of the initial ideals $J_w$ and $\SSJ_w$ for $I_w=I(\MX_w)$ and $\SSI_w =I(\SSX_w)$. These decompositions are intersections of monomial ideals indexed by certain \defn{pipe dreams} of $w$; see \cite[Thm.~B]{KnutsonMiller} and \cite[Thm.~4.15]{MP2020}.

We expect that results will hold for symmetric matrix Schubert varieties.
Define an \defn{involution pipe dream} for $z \in \I_\infty^{(n)}$ as in \cite[\S1.2]{HMP6}:
this is a subset of $ \ltriangeq_n := \{ (i,j) \in [n]\times [n]: i\geq j\}$
whose reading word, appropriately defined, determines a certain kind of ``almost symmetric'' reduced word for $z$.
Each involution pipe dream $D$ has an associated wiring diagram and one can define
$m_D : \ltriangeq_n \to \{1,2\}$ to be the map with $m_D(i,j)=2$ 
if and only if $i\neq j$ and some pair of wires labeled by $k$ and $z(k)$ cross at $(i,j)$.

Computations for $n\leq 10$ support the following conjecture.
Recall that $\I_n\subsetneq \I_\infty^{(n)}$ is the set of involutions in $S_n$, corresponding to the symmetric $n\times n$ partial permutation matrices of full rank.
\begin{conjecture}\label{pd-conj}
If $w \in \I_n$ then $ \SymJ_w= \bigcap_{D}  \IPDideal_D$ 
where 
$\IPDideal_D :=  \left\langle u_{ij}^{m_D(i,j)} : (i,j) \in D\right\rangle
$ and where 
$D$ runs over all involution pipe dreams for all $z \in \I_n $ with $z \geq w$ in Bruhat order. 
\end{conjecture}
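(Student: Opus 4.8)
The plan is to prove this conditionally on Conjecture~\ref{sym-conj}, following the template by which the analogous primary decompositions for $J_w$ and $\SSJ_w$ are obtained in \cite{KnutsonMiller} and \cite{MP2020}. Assume a transition system $\cT$ exists containing every pair $(I(\SymX_y),\SymJ_y)$ with $y\in I_\infty^{(n)}$. Then $\SymJ_y=\init(I(\SymX_y))$ by Theorem~\ref{new-ts-thm}, and since $\SymX_y$ is irreducible with $u_{pq}\notin I(\SymX_y)$ for each outer corner $(p,q)$ of $\dom(y)$, Corollary~\ref{strongerT-cor}(b) yields $(\SymJ_y\colon\langle u_{pq}\rangle)=\SymJ_y$. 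The target is now a statement purely about the monomial ideal $\SymJ_y$, and I would prove it by descending induction on the Bruhat order on $I_\infty^{(n)}$ (equivalently, by induction on $\operatorname{codim}\SymX_y$). The base case is $y=0_{n\times n}$, the Bruhat-maximal element: there $\SymJ_y=\langle u_{ij}:i\geq j\rangle$ equals $\IPDideal_D$ for the unique involution pipe dream $D=\ltriangeq_n$ of the unique $z\geq 0_{n\times n}$, so both sides are the irrelevant ideal.

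For the inductive step, fix a non-maximal $y$, choose the distinguished outer corner $(p,q)$ of $\dom(y)$ with $y(p)=q$ (exactly as in Sections~\ref{msv-non-sect} and \ref{ss-msv-non-sect}; here $p=q$ is allowed), and write $B=\bigcap_D\IPDideal_D$. The containment $\SymJ_y\subseteq B$ is the ``easy'' half: it follows from a symmetric analogue of the ``pipe dreams meet antidiagonals'' lemma underlying \cite{KnutsonMiller} (in the involution setting of \cite{HMP6}), which says that each generator $\adiag(\SymcX_{RC})$ of $\SymJ_y$, supported in $[i]\times[j]$ with $|R|=|C|=1+\rank y_{[i][j]}\geq 1+\rank z_{[i][j]}$, is divisible by $u_{ab}^{m_D(a,b)}$ for some $(a,b)\in D$ and hence lies in every $\IPDideal_D$. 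With $\SymJ_y\subseteq B$ in hand, Lemma~\ref{lem:abelian} applied with $C=\langle u_{pq}\rangle$ reduces the problem to the two identities
\[
\SymJ_y+\langle u_{pq}\rangle=B+\langle u_{pq}\rangle
\qquad\text{and}\qquad
\bigl(B\colon\langle u_{pq}\rangle\bigr)=\SymJ_y,
\]
the second being equivalent to $B\cap\langle u_{pq}\rangle=\SymJ_y\cap\langle u_{pq}\rangle=u_{pq}\SymJ_y$. Both should follow from a \emph{pipe-dream transition recurrence} obtained by splitting the involution pipe dreams $D$ of permutations $z\geq y$ according to whether $(p,q)\in D$ and, when $(p,q)\in D$, whether $m_D(p,q)$ is $1$ or $2$: the dreams with $(p,q)\notin D$ should be in bijection with involution pipe dreams of the permutations $z\geq v$, as $v$ runs over the cover set produced by the transition move at $y$, reassembling the monomial partners of $\Phi$ once $\langle u_{pq}\rangle$ is added; those with $(p,q)\in D$ and $m_D(p,q)=1$ contribute ideals containing $u_{pq}$ and hence vanish from $(B\colon\langle u_{pq}\rangle)$; and those with $m_D(p,q)=2$ account for the extra non-radical ideals seen in Example~\ref{22sym-ex} and must be matched against the colon computation. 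The inductive hypothesis applies to each $\IPDideal_D$ reindexed in this way, because the relevant $z$ lie strictly above $y$.

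The main obstacle is twofold. First, Conjecture~\ref{sym-conj} is open, so the argument is conditional; moreover the transition systems realizing it must contain non-radical auxiliary ideals (Example~\ref{22sym-ex}) whose monomial partners appearing in $\Phi$ need not have the form $\SymJ_v$, so in practice the inductive claim must be broadened to every monomial ideal in $\cT$ together with an enlarged notion of pipe dream — though the available evidence (such as $\langle u_{11},u_{21}^2\rangle$ being $\IPDideal_D$ for an involution pipe dream of a permutation above $132$ in Example~\ref{22sym-ex}) suggests these partners are themselves intersections of ideals $\IPDideal_D$ over $z\geq y$. Second, and specific to the symmetric case, the weight $m_D(i,j)$ equals $2$ precisely when $i\neq j$ and a pair of wires labeled $k$ and $z(k)$ crosses at $(i,j)$, and controlling how these doubled cells are created and destroyed by the transition move at an outer corner $(p,q)$ — including the diagonal case $p=q$, which already occurs for $y=132$ in Example~\ref{22sym-ex} — is genuinely new combinatorics, and is exactly the phenomenon that prevents the family $\{(I(\SymX_w),\SymJ_w)\}$ from forming a transition system on its own. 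An alternative, unconditional route would prove the identity directly as a monomial-ideal statement — for instance by polarizing the doubled variables and exhibiting $\bigcap_D\IPDideal_D$ as the Stanley--Reisner ideal of a shellable complex whose facets index the involution pipe dreams, then reading off the minimal and embedded components of $\SymJ_y$ — but making polarization interact cleanly with the weight-$2$ cells looks no easier, and such an argument would in effect recover much of Conjecture~\ref{sym-conj} anyway.
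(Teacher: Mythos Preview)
The paper does not prove this statement at all: it is presented as an open \emph{conjecture}, introduced with ``Computations support the following conjecture'' and accompanied only by the remark that the stated decomposition is likely redundant. There is no proof in the paper for you to compare against.

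Your proposal is therefore not a reproof of anything in the paper but an attempted proof of an open problem. As you yourself acknowledge, the argument is doubly conditional: it assumes Conjecture~\ref{sym-conj}, and it requires a symmetric ``pipe dreams meet antidiagonals'' lemma together with a pipe-dream transition recurrence tracking the weight-$2$ cells, neither of which exists in the literature. These are not minor technicalities to be filled in---they are exactly the missing ingredients that make the statement a conjecture rather than a theorem, and your outline does not supply them. In particular, the step where you claim the inductive hypothesis can be broadened to the auxiliary non-radical ideals in $\cT$ is speculative: the paper explicitly notes that the structure of these extra ideals is not understood even for $n=4$, and there is no reason yet to expect them all to be intersections of ideals $\IPDideal_D$.
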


Replacing $\I_n$ by $\I_\infty^{(n)}$ in this conjecture would give the most natural symmetric analogue of \cite[Thm.~4.15]{MP2022}.
However, this broader claim is false, and to decompose $ \SymJ_w$ when $w$
corresponds to a symmetric $n\times n$ partial permutation matrix that is not invertible, a more involved statement is required.

We also mention that when $w\in \I_n$, the  decomposition given in Conjecture~\ref{pd-conj}
 is very redundant. It suffices to let $D$ run over the minimal elements of the poset on involution pipe dreams for $z \geq w$ defined by $D \leq E$ if $\IPDideal_D \subseteq \IPDideal_{E}$. If the exponents $m_D(i,j)$ in the definition of $\IPDideal_D$ were replaced by $1$, then these minimal elements would be just the involution pipe dreams for $w$. This does seem to hold if $w$ is \defn{non-crossing} in the sense of being $3412$-avoiding, but is not true in general.

\subsection{Transition forests}\label{tf-sect}

One can attempt to find a transition system containing a given family of ideals by recursively determining primary decompositions in the following way. This method is amenable to computer calculations using algebra systems like {\tt Macaulay2}.

For a non-maximal ideal $I\subset \KK[\SymMat_{n\times n}]$, let $u_I$ be the lexicographically smallest variable $u_{ij} \notin I$ 
and define $\sK_I$ to be any set of primary ideals such that
$I + \langle u_{ij}\rangle = \bigcap_{K \in \sK_I} K$ is a primary decomposition.
When the ideal $I$ is maximal, we leave $u_I$ undefined and set $\sK_I = \varnothing$.

Now, from a given set of ideals $\sI$, we form $\cT_\sI := \cT_\sI^0 \cup \cT_\sI^1 \cup   \cT_\sI^2 \cup \cdots$ where
\[\textstyle \cT_\sI^0 := \{ (I,\init(I)) : I \in \sI\} \quand \cT_\sI^{i+1} := \left\{ (K, \init(K)) : K \in \bigcup_{(I,J) \in \cT_\sI^{i}} \sK_I\right\}.
\]
This definition yields an increasing chain of subsets $\cT_\sI^0\subseteq \cT_\sI^1 \subseteq \cT_\sI^2 \subseteq\cdots$ which 
 terminates in a finite number of steps 
since the ambient coordinate ring is Noetherian. 

It is not guaranteed that this process will yield a family satisfying every part of Definition~\ref{ts-def}:
 problems can occur with conditions (T3) and (T4).
However, the construction of $\cT_\sI$ sometimes does produce a transition system.

We visualize $\cT_\sI$ by drawing a forest graph whose vertices are the ideals $I$ with $(I,J) \in \cT_{\sI}$. In this \defn{transition forest}, the children of a vertex labeled $I$ are the elements of $\sK_I$.
Although we have formulated these definitions specifically for the case $A=\SymMat_{n\times n}$, the relevant ideas easily extend to the case when $A$ is any affine space.

\begin{example}
We revisit the ideals $\sI = \left\{ I(\SymX_w) : w \in \I_\infty^{(2)}\right\}$ of $\KK[\SymMat_{2\times 2}]$ from Example~\ref{22sym-ex}.
If we define $I_1,\dots,I_6$ as in that example, then $\sI = \{I_i : i \in[5]\}$
and the transition forest of $\cT_\sI$ is
\[
\begin{tikzpicture}
\node {$I_{1}=\left\langle{u}_{21}^{2}-{u}_{11}{u}_{22}\right\rangle$}
    child {
        node {$\boxed{I_{6}=  \left\langle {u}_{11},\ {u}_{21}^{2}\right\rangle}$}
            child {
                node {$I_{4}= \left\langle{u}_{21},\ {u}_{11}\right\rangle$}
                    child {
                        node {$I_{5}= \left\langle {u}_{22},\ {u}_{21},\ {u}_{11}\right\rangle$}
                    }
            }
    }
;
\end{tikzpicture}
\quad
\begin{tikzpicture}
\node {$I_{2}=0$}
    child {
        node {$I_{3}=\left\langle {u}_{11}\right\rangle$}
            child {
                node {$I_{4}=\left\langle{u}_{21},\ {u}_{11}\right\rangle$}
                    child {
                        node {$I_{5}= \left\langle {u}_{22},\ {u}_{21},\ {u}_{11}\right\rangle$}
                    }
            }
    }
;
\end{tikzpicture}
\]
Here and in later pictures, we place boxes around the ideals $\boxed{I}$ with $I\notin \sI$.
As we know from Example~\ref{22sym-ex}, the set $\cT_\sI$ is a transition system
and therefore verifies Conjecture~\ref{sym-conj} when $n=2$.
\end{example}

  Recall that the \defn{associated primes} of an ideal $I$ are the prime ideals $\left\{\sqrt{K_1},\sqrt{K_2}, \ldots, \sqrt{K_m}\right\}$ where $I = K_1 \cap K_2\cap \cdots \cap K_m$ is a primary decomposition. An \defn{embedded prime} is an associated prime which is not minimal with respect to set-theoretic inclusion. If there are no embedded primes, then there is a  unique primary decomposition $I = K_1 \cap K_2\cap \cdots \cap K_m$ in which no $K_i$ is redundant and with all $\sqrt{K_1}, \sqrt{K_2},\ldots, \sqrt{K_m}$ distinct.
  
  If no embedded primes occur for  the ideals $I+\langle u_I\rangle$ with $(I,J) \in \cT_\sI$,
  then the set $\cT_\sI$ is uniquely determined as long as we insist that each $\sK_I$ be a unique irredundant primary decomposition.
In general, however, there can be multiple ways of constructing $\cT_\sI$ from a given set of ideals $\sI$.
This phenomenon did not arise in the previous example but does in the following one.

\begin{example}\label{sym3-ex}
Consider the ideals $\sI = \left\{ I(\SymX_w) : w \in \I_\infty^{(3)}\right\}$ of $\KK[\SymMat_{3\times 3}]$.
In this case $\sI$ has 14 elements $I_1,I_2,\dots,I_{14}$, which are indicated in Figure~\ref{i14-fig}. We have used {\tt Macaulay2} to find one valid construction of $\cT_\sI$. This requires adding  7 more ideals $I_{15},I_{16},\dots,I_{21}$, also shown in Figure~\ref{i14-fig}. Figure~\ref{f14-fig} displays the associated transition forest.

Embedded primes occur in this example, so $\cT_\sI$ is not unique. Specifically, for 
\[
I_{19} = \left\langle{u}_{11},\ {u}_{32}^{2}-{u}_{22}{u}_{33},\ {u}_{31}{u}_{32}-{u}_{21}{u}_{33},\ {u}_{31}^{2},\ {u}_{22}{u}_{31}-{u}_{21}{u}_{32},\ {u}_{21}{u}_{31},\ {u}_{21}^{2}\right\rangle
\]
we have a primary decomposition $I_{19} + \langle u_{21}\rangle = I_{11} \cap I_{21}$ where
\[
I_{11} =  \left\langle{u}_{31},\ {u}_{21},\ {u}_{11},\ {u}_{32}^{2}-{u}_{22}{u}_{33}\right\rangle
\quand
I_{21} = \left\langle{u}_{22},\ {u}_{21},\ {u}_{11},\ {u}_{32}^{2},\ {u}_{31}{u}_{32},\ {u}_{31}^{2}\right\rangle.
\]
However, it holds that
$
    \sqrt{I_{11}} = I_{11} = \langle u_{31}, u_{21}, u_{11}, u_{22}u_{33}-u_{32}^2\rangle \subsetneq \sqrt{I_{21}} = \langle u_{22}, u_{21}, u_{11}, u_{31}, u_{32}\rangle.
$
Regardless, we have checked that the construction of $\cT_\sI$ corresponding to what is shown in Figure~\ref{f14-fig} is a transition system. This verifies Conjecture~\ref{sym-conj} when $n=3$.
\begin{figure}[h]
\begin{center}
\begin{tabular}{c| l}
$w \in \I_\infty^{(3)}$ & $I(\SymX_w)\subset\KK[\SymMat_{3\times 3}]$  
\\[-8pt]\\ \hline \\[-8pt]
$\left[\begin{smallmatrix} 1 & 0 & 0 \\ 0 & 1 & 0 \\ 0 & 0 & 1 \end{smallmatrix}\right]$ & $I_1=0$
\\[-8pt]\\
$\left[\begin{smallmatrix} 1 & 0 & 0 \\ 0 & 1 & 0 \\ 0 & 0 & 0 \end{smallmatrix}\right]$ & $I_{2} = \left\langle{u}_{22}{u}_{31}^{2}-2{u}_{21}{u}_{31}{u}_{32}+{u}_{11}{u}_{32}^{2}+{u}_{21}^{2}{u}_{33}-{u}_{11}{u}_{22}{u}_{33}\right\rangle$
\\[-8pt]\\
$\left[\begin{smallmatrix} 1 & 0 & 0 \\ 0 & 0 & 1 \\ 0 & 1 & 0 \end{smallmatrix}\right]$ & $I_{3} = \left\langle{u}_{21}^{2}-{u}_{11}{u}_{22}\right\rangle$
\\[-8pt]\\
$\left[\begin{smallmatrix} 1 & 0 & 0 \\ 0 & 0 & 0 \\ 0 & 0 & 1 \end{smallmatrix}\right]$ & $I_{4} = \left\langle{u}_{22}{u}_{31}-{u}_{21}{u}_{32},\ {u}_{21}{u}_{31}-{u}_{11}{u}_{32},\ {u}_{21}^{2}-{u}_{11}{u}_{22}\right\rangle$
\\[-8pt]\\
$\left[\begin{smallmatrix} 1 & 0 & 0 \\ 0 & 0 & 0 \\ 0 & 0 & 0 \end{smallmatrix}\right]$ & $I_{5} = \left\langle
\barr{lll} 
{u}_{32}^{2}-{u}_{22}{u}_{33},&
{u}_{31}{u}_{32}-{u}_{21}{u}_{33},&
{u}_{31}^{2}-{u}_{11}{u}_{33},\\ 
{u}_{22}{u}_{31}-{u}_{21}{u}_{32},&
{u}_{21}{u}_{31}-{u}_{11}{u}_{32},&
{u}_{21}^{2}-{u}_{11}{u}_{22}
\earr\right\rangle$
\\[-8pt]\\
$\left[\begin{smallmatrix} 0 & 1 & 0 \\ 1 & 0 & 0 \\ 0 & 0 & 1 \end{smallmatrix}\right]$ & $I_{6}  = \left\langle{u}_{11}\right\rangle$
\\[-8pt]\\
$\left[\begin{smallmatrix} 0 & 1 & 0 \\ 1 & 0 & 0 \\ 0 & 0 & 0 \end{smallmatrix}\right]$ & $I_{7} = \left\langle{u}_{11},\ {u}_{22}{u}_{31}^{2}-2{u}_{21}{u}_{31}{u}_{32}+{u}_{21}^{2}{u}_{33}\right\rangle$
\\[-8pt]\\
$\left[\begin{smallmatrix} 0 & 0 & 1 \\ 0 & 1 & 0 \\ 1 & 0 & 0 \end{smallmatrix}\right]$ & $I_{8} = \left\langle{u}_{21},\ {u}_{11}\right\rangle$
\\[-8pt]\\
$\left[\begin{smallmatrix} 0 & 0 & 1 \\ 0 & 0 & 0 \\ 1 & 0 & 0 \end{smallmatrix}\right]$
&
$I_{9} = \left\langle{u}_{22},\ {u}_{21},\ {u}_{11}\right\rangle$
\\[-8pt]\\
$\left[\begin{smallmatrix} 0 & 0 & 0 \\ 0 & 1 & 0 \\ 0 & 0 & 1 \end{smallmatrix}\right]$ & $I_{10} = \left\langle{u}_{31},\ {u}_{21},\ {u}_{11}\right\rangle$
\\[-8pt]\\
$\left[\begin{smallmatrix} 0 & 0 & 0 \\ 0 & 1 & 0 \\ 0 & 0 & 0 \end{smallmatrix}\right]$ & $I_{11} =  \left\langle{u}_{31},\ {u}_{21},\ {u}_{11},\ {u}_{32}^{2}-{u}_{22}{u}_{33}\right\rangle$
\\[-8pt]\\
$\left[\begin{smallmatrix} 0 & 0 & 0 \\ 0 & 0 & 1 \\ 0 & 1 & 0 \end{smallmatrix}\right]$ & $I_{12} =\left\langle{u}_{31},\ {u}_{22},\ {u}_{21},\ {u}_{11}\right\rangle$
\\[-8pt]\\
$\left[\begin{smallmatrix} 0 & 0 & 0 \\ 0 & 0 & 0 \\ 0 & 0 & 1 \end{smallmatrix}\right]$ & $I_{13}  = \left\langle{u}_{32},\ {u}_{31},\ {u}_{22},\ {u}_{21},\ {u}_{11}\right\rangle$
\\[-8pt]\\
$\left[\begin{smallmatrix} 0 & 0 & 0 \\ 0 & 0 & 0 \\ 0 & 0 & 0 \end{smallmatrix}\right]$  & $I_{14} = \left\langle{u}_{33},\ {u}_{32},\ {u}_{31},\ {u}_{22},\ {u}_{21},\ {u}_{11}\right\rangle$
\\[-8pt]\\
 &$I_{15} = \left\langle{u}_{21},\ {u}_{11},\ {u}_{31}^{2}\right\rangle$
\\[-8pt]\\
 &$I_{16} = \left\langle{u}_{11},\ {u}_{21}^{2}\right\rangle$
\\[-8pt]\\
 &$I_{17} = \left\langle{u}_{11},\ {u}_{31}^{2},\ {u}_{22}{u}_{31}-{u}_{21}{u}_{32},\ {u}_{21}{u}_{31},\ {u}_{21}^{2}\right\rangle$
\\[-8pt]\\
 &$I_{18} = \left\langle{u}_{22},\ {u}_{21},\ {u}_{11},\ {u}_{31}^{2}\right\rangle$
\\[-8pt]\\
 &$I_{19} = \left\langle{u}_{11},\ {u}_{32}^{2}-{u}_{22}{u}_{33},\ {u}_{31}{u}_{32}-{u}_{21}{u}_{33},\ {u}_{31}^{2},\ {u}_{22}{u}_{31}-{u}_{21}{u}_{32},\ {u}_{21}{u}_{31},\ {u}_{21}^{2}\right\rangle$
\\[-8pt]\\
 &$I_{20} = \left\langle{u}_{31},\ {u}_{22},\ {u}_{21},\ {u}_{11},\ {u}_{32}^{2}\right\rangle$
\\[-8pt]\\
 &$I_{21} = \left\langle{u}_{22},\ {u}_{21},\ {u}_{11},\ {u}_{32}^{2},\ {u}_{31}{u}_{32},\ {u}_{31}^{2}\right\rangle$
\end{tabular}
\end{center}
\caption{Ideals of $\KK[\SymMat_{3\times 3}]$ defined for the transition system $\cT_\sI$ in Example~\ref{sym3-ex}}\label{i14-fig}
\end{figure}

\begin{figure}[h]
\[
\begin{tikzpicture}
\node {$I_{1}$}
    child {
        node {$I_{6}$}
            child {
                node {$I_{8}$}
                    child {
                        node {$I_{10}$}
                            child {
                                node {$I_{12}$}
                                    child {
                                        node {$I_{13}$}
                                            child {
                                                node {$I_{14}$}
                                            }
                                    }
                            }
                    }
            }
    }
;
\end{tikzpicture}
\quad
\begin{tikzpicture}
\node {$I_{2}$}
    child {
        node {$I_{7}$}
            child {
                node {$I_{9}$}
                    child {
                        node {$I_{12}$}
                            child {
                                node {$I_{13}$}
                                    child {
                                        node {$I_{14}$}
                                    }
                            }
                    }
            }
            child {
                node {$\boxed{I_{15}}$}
                    child {
                        node {$I_{10}$}
                            child {
                                node {$I_{12}$}
                                    child {
                                        node {$I_{13}$}
                                            child {
                                                node {$I_{14}$}
                                            }
                                    }
                            }
                    }
            }
    }
;
\end{tikzpicture}
\quad
\begin{tikzpicture}
\node {$I_{3}$}
    child {
        node {$\boxed{I_{16}}$}
            child {
                node {$I_{8}$}
                    child {
                        node {$I_{10}$}
                            child {
                                node {$I_{12}$}
                                    child {
                                        node {$I_{13}$}
                                            child {
                                                node {$I_{14}$}
                                            }
                                    }
                            }
                    }
            }
    }
;
\end{tikzpicture}
\quad
\begin{tikzpicture}
\node {$I_{4}$}
    child {
        node {$I_{9}$}
            child {
                node {$I_{12}$}
                    child {
                        node {$I_{13}$}
                            child {
                                node {$I_{14}$}
                            }
                    }
            }
    }
    child {
        node {$\boxed{I_{17}}$}
            child {
                node {$I_{10}$}
                    child {
                        node {$I_{12}$}
                            child {
                                node {$I_{13}$}
                                    child {
                                        node {$I_{14}$}
                                    }
                            }
                    }
            }
            child {
                node {$\boxed{I_{18}}$}
                    child {
                        node {$I_{12}$}
                            child {
                                node {$I_{13}$}
                                    child {
                                        node {$I_{14}$}
                                    }
                            }
                    }
            }
    }
;
\end{tikzpicture}
\quad
\begin{tikzpicture}
\node {$I_{5}$}
    child {
        node {$\boxed{I_{19}}$}
            child {
                node {$I_{11}$}
                    child {
                        node {$\boxed{I_{20}}$}
                            child {
                                node {$I_{13}$}
                                    child {
                                        node {$I_{14}$}
                                    }
                            }
                    }
            }
            child {
                node {$\boxed{I_{21}}$}
                    child {
                        node {$\boxed{I_{20}}$}
                            child {
                                node {$I_{13}$}
                                    child {
                                        node {$I_{14}$}
                                    }
                            }
                    }
            }
    }
;
\end{tikzpicture}
\]
\caption{Transition forest constructed for Example~\ref{sym3-ex}. The boxed ideals $\boxed{I}$ are the ideals not of the form $I(\SymX_w)$ for any $w \in \SymMat_{3\times 3}$.}\label{f14-fig}
\end{figure}
\end{example}

\begin{example}
The family of ideals $\sI = \left\{ I(\SymX_w) : w \in \I_\infty^{(4)}\right\}$ in $\KK[\SymMat_{4\times 4}]$ has 43 elements. We have used {\tt Macaulay2} to construct $\cT_\sI$ and check that it is a transition system, verifying Conjecture~\ref{sym-conj} when $n=4$.
Our calculation gives a set $\cT_\sI$ with 86 elements. The corresponding transition forest is too large to show here, but we mention that it has vertices with $\geq 2$ children.
\end{example}

There is a weaker form of Conjecture~\ref{sym-conj} that we can check by computer in a few more cases.
Let $\I_n = \{ w \in S_n : w=w^{-1} \}$ be the subset of elements in $\I_\infty^{(n)}$ whose $n\times n$ permutation matrices have full rank.
Using the same algorithm as in the preceding examples, we have been able to construct 
a transition system   containing $ \{(I(\SymX_w),\SymJ_w) : w \in I_w\}$ for all $n\leq 6$.

\section{Applications to stable limits}\label{stable-sect}

In this final section we study the \defn{$K$-polynomials} of the varieties of the ideals
 $\SymI_w\subset \KK[\SymMat_{n\times n}]$ defined in \eqref{sym-msv-minors-eq}. 
 Recall that these ideals are indexed by 
 involutions $w$ in the set  $\I_\infty^{(n)}$ from \eqref{II-eq}. 
 
 If we assume Conjecture~\ref{sym-conj3a}, then 
the $K$-polynomials of the determinantal ideals $\SymI_w$ coincide after a change of variables with the family of
\defn{orthogonal Grothendieck polynomials} introduced in \cite{MP2020}.
 After reviewing some relevant definitions, 
 we will prove that if Conjecture~\ref{sym-conj3a} holds then each of these polynomials has a well-defined ``stable limit'' in the ring of formal power series.
This will connects the main conjectures in Section~\ref{schumat-sect3} to an open problem posed in \cite{MP2020}. 

 \subsection{Equivariant K-theory}
 
We first recall a few facts about equivariant $K$-theory from \cite[\S5]{ChGi}.  Let $R$ be a
finitely generated commutative $\KK$-algebra
such that
$X = \Spec R$ is a smooth affine variety. Suppose that a linear algebraic group $G$ acts on $X$ algebraically. The action of $G$ on $X$ makes $R$ a comodule for the Hopf algebra $\KK[G]$. An algebraic $R$-module $M$ is an \defn{$(R,G)$-module} if it also has the structure of a $\KK[G]$-comodule in such a way that the multiplication map $R \otimes M \to M$ is a morphism of $\KK[G]$-comodules. 

The \defn{equivariant $K$-theory group $K_G(X)$} is the Grothendieck group of the category of finitely generated $(R,G)$-modules. We usually write $[M]$ for the class of $M$ in $K_G(X)$, but to emphasize the group $G$ we sometimes express this as
$[M]_G$.  When $Z$ is a subscheme of $X$ with ideal $I(Z)$, we define $[Z] := [R/I(Z)]$. Here are some relevant observations:

\begin{itemize}
  \item We can take the tensor product over $R$ of $(R,G)$-modules, but this does not yield a well-defined product on $K_G(X)$ because tensor products need not preserve exact sequences. Correcting this by defining $[M][N] = \sum_{i \geq 0} (-1)^i[\Tor_i^R(M,N)]$ makes $K_G(X)$ into a commutative ring. It is a nontrivial fact that the smoothness of $X$ guarantees finiteness of the sum.
  
  \item Suppose $j : Z \hookrightarrow X$ is the inclusion of a smooth $G$-invariant subvariety. If $M$ is an $(R,G)$-module then each $\Tor_i^R(R/I(Z),M)$ is an $(R/I(Z),G)$-module, so we can view $[M][Z]$ as an element of $K_G(Z)$. This defines a pullback homomomorphism $j^* :  K_G(X) \to K_G(Z)$.
  
  \item Suppose $\pi : \Spec S = Y \to X$ is a $G$-equivariant morphism. Then $S$ itself is an $(R,G)$-module via $\pi^* : R \to S$, and if $M$ is an $(R,G)$-module then $S \otimes_R M$ is an $(S,G)$-module. If $\pi$ is a \defn{flat} morphism, meaning that $S$ is a flat $R$-module, then  $ S\otimes_R \bullet$ is an exact functor from $(R,G)$-modules to $(S,G)$-modules, and one can define a pullback morphism $\pi^* : K_G(X) \to K_G(Y)$. If $\KK = \CC$ and $\pi$ is a homotopy equivalence, then $\pi^*$ is an isomorphism.
  
  \item A $(\KK,G)$-module is just a representation of $G$, so $K_G(\pt)$ is the Grothendieck ring $\Rep(G)$ of finite-dimensional $G$-representations. The pullback of the constant map $X \to \pt$ gives $K_G(X)$ the structure of a $\Rep(G)$-algebra.
\end{itemize}

 We note one general fact for use in the next section.
 
 \begin{lemma} \label{lem:subrep-pullback}
    Suppose $V = U \oplus U'$ is a finite-dimensional representation of $G$, with $U$ and $U'$ subrepresentations and $\alpha : U \hookrightarrow V$ the inclusion. Then $\Rep(G) \cong K_G(V) \xrightarrow{\alpha^*} K_G(U) \cong \Rep(G)$ is the identity map.
\end{lemma}

\begin{proof}
    Let $\pi : V \to U$ be the natural projection map. As $[M] \in \Rep(G)$ is identified with $[\KK[U] \otimes_{\KK} M] \in K_G(U)$, we have
    $
            \pi^*[\KK[U] \otimes_{\KK} M] = [\KK[V] \otimes_{\KK[U]}  (\KK[U] \otimes M)] = [\KK[V] \otimes M],
$
    so $\pi^* : K_G(U) \to K_G(V)$ is the identity map. But $\alpha^* \pi^* = (\pi \circ \alpha)^* = \id$.
\end{proof}

Choose a torus $T\subseteq G$ and 
suppose $M$ is a representation of $T$ with a finite-dimensional weight decomposition, so that $M = \bigoplus_{\chi \in \Hom(T,\KK^\times)} M_{\chi}$ where each \defn{weight space} \[M_{\chi} = \{m \in M : tm = \chi(t)m\}\] is finite-dimensional. A \defn{weight vector} for $M$ is just an element of some $M_{\chi}$, and if $v \in M_{\chi}$ then we write $\wght(v) = \chi$. The \defn{character} of $M$ is then defined to be 
\[ \textstyle \ch(M) = \sum_{\chi \in \Hom(T,\KK^\times)} \dim(M_{\chi})\chi.\]
We view this as an element of the completion of the group ring $\ZZ[\Hom(T,\KK^\times)]$ when there are infinitely many nonzero summands.

Fix an isomorphism $T \cong (\KK^\times)^n$.  Then we can identify $\ZZ[\Hom(T,\KK^\times)] \cong \ZZ[a_1^{\pm 1}, a_2^{\pm1},\ldots, a_n^{\pm 1}]$, and under this identification $\ch(M)$ becomes a formal Laurent series in $a_1,a_2, \ldots, a_n$. 
An equivalent point of view is that $M$ is a vector space graded by the abelian group $\Hom(T,\KK^\times) \cong \ZZ^n$, with the piece in degree $\chi$ being the weight space $M_{\chi}$. For this reason, the character $\ch(M)$ is also called the \defn{multigraded Hilbert series} of $M$.

 If $X = \Spec R$ has a $T$-action, then $R$ itself is a $T$-module. An $(R,T)$-module $M$ is then the same thing as an $R$-module that is $\Hom(T,\KK^\times)$-graded in the sense that $R_\chi M_\psi \subseteq M_{\chi\psi}$.

\begin{definition}[{See \cite[Def.~8.21]{MillerSturmfels}}]
  If $V$ is a finite-dimensional representation of $T$, then the \defn{$K$-polynomial} of a finitely generated $(\KK[V], T)$-module $M$ is
$  \cK(M) = \frac{\ch( M)}{\ch( \KK[V])}.
$
\end{definition}

The $K$-polynomial $\cK(M)$ belongs \emph{a priori} to the field of formal Laurent series in $a_1,a_2, \ldots, a_n$ over $\KK$.
It is known  \cite[Thm.~8.20]{MillerSturmfels} that this series  actually gives a polynomial
\be
\cK(M) \in \KK[a_1,a_2,\dots,a_n].
\ee
 This polynomial exactly describes the class of $M$ in $K_{T}(V)$, as we now explain. We have 
 \[K_{T}(\pt) = \Rep(T) \cong \ZZ[a_1^{\pm 1},\ldots, a_n^{\pm 1}]\] where the last isomorphism is the character map. Let $\pi : V \to \{0\}$ be the constant map. By definition, $\pi^*(f) = [\KK[V] \otimes_{\KK} W]$ for a $T$-module $W$ with character $f$. Thus $\ch(\pi^*(f)) = \ch(\KK[V])\ch(W)$, so we can recover $f$ from $\pi^*(f)$ as $\cK(\pi^*(f))$. 

At least over $\CC$, one can argue that  $\pi^*$ is a ring isomorphism because $\pi$ is a $T$-equivariant homotopy equivalence. Alternatively, the surjectivity of $\pi^*$ for general $\KK$ follows from the algebraic fact
\cite[Prop.~8.18]{MillerSturmfels} that every finitely generated $\ZZ^n$-graded $\KK[V]$-module has a finite resolution by graded free $\KK[V]$-modules. To sum up, sending $[M]_{T} \mapsto \cK(M)$ defines a ring isomorphism \[K_{T}(V) \xrightarrow{\sim} \ZZ[a_1^{\pm 1},a_2^{\pm 1}, \ldots, a_n^{\pm 1}].\]  
 We identify $K_{T}(V)$ with the ring of Laurent polynomials $\ZZ[a_1^{\pm 1},a_2^{\pm 1}, \ldots, a_n^{\pm 1}]$ via this map.

\begin{example}\label{22-k-ex}
Let $V = \SymMat_{2\times 2}$ and suppose $Z\subset V$ is the subvariety of noninvertible matrices (denoted $X_1$ in Example~\ref{22sym-ex}). 
Relative to the action $b : M \mapsto bMb^\top$,
the $\KK$-vector space $V$ is a finite-dimensional representation of the group $G=\B_2$ 
and therefore also of its torus $T$, which consists of all invertible $2\times 2$ diagonal matrices over $\KK$.
Let $a_i : T \to \KK$ for each $i \in \{1,2\}$ denote the representation $a_i(t) = t_{ii}$.

In the notation of Example~\ref{22sym-ex}
 we have 
$
 I(Z) = \langle u_{21}^2 - u_{11}u_{22}\rangle $
 and
$ [Z]_{T} = [\KK[V]/I(Z)]_{T}.$ Let $\cM$ be the set of all monomials in the variables $\{u_{11}, u_{21}, u_{22}\}$. These monomials are weight vectors for $T$ with weights $\wght(u_{ij}) = a_{i} a_j$   and $\wght(m_1 m_2) = \wght(m_1)\wght(m_2)$, and we have 
 \[\textstyle \ch(\KK[V]) = \sum_{m \in \cM} \wght(m).\] If we further let $\cM' \subseteq \cM$ be the subset of monomials divisible by $u_{21}^2$, then $\cM \setminus \cM'$ is a basis of weight vectors for $\KK[V]/I(Z)$. Therefore, we have
\begin{align*}
  \cK(Z)  := \cK(V/I(Z)) = \tfrac{\ch(\KK[V]/I(Z))}{\ch(\KK[V])} = \tfrac{\sum_{m \in \cM} \wght(m) - \sum_{m \in \cM'} \wght(m)}{\sum_{m \in \cM} \wght(m)} = 1 - \tfrac{\sum_{m \in \cM'} \wght(m)}{\sum_{m \in \cM} \wght(m)}.
\end{align*}
The map $m \mapsto u_{21}^2 m$ is a bijection $\cM \to \cM'$ with $\wght(u_{21}m) = (a_1 a_2)^2 \wght(m)$, so in fact 
\[\cK(Z) =1-(a_1 a_2)^2.\]

If $v_1, v_2,\ldots, v_m$ is a basis of weight vectors for any $T$-representation $V$, then one always has the explicit formula $\ch (\KK[V]) = \prod_{i=1}^m \tfrac{1}{1-\wght(v_i)}$. However, as this example suggests, this computation is not really necessary for determining $K$-polynomials.
\end{example}

 \subsection{Orthogonal Grothendieck polynomials}

From now on we specialize to the case when $V = \SymMat_{n\times n}$
and $T$ is the group of invertible $n\times n$ diagonal matrices over our algebraically closed field $\KK$, 
for an arbitrary positive integer $n$.
Then $V$ is a finite-dimensional representation of $T$
under the action $t : M \mapsto tMt$.

As in Section~\ref{schumat-sect3} and Example~\ref{22-k-ex}, we identify
$
\KK[V] = \KK[\SymMat_{n\times n}] = \KK[u_{ij} : 1\leq j\leq i\leq n]
$
and let $a_i : T \to \KK$ for $i \in [n]$ denote the representation $a_i(t) = t_{ii}$.
Then each variable $u_{ij}$ is a weight vector in the $T$-module $\KK[V]$ with weight $a_ia_j$,
and we identify
\[
K_{T}(V)=\ZZ[a_1^{\pm 1},a_2^{\pm 1}, \ldots, a_n^{\pm 1}].
\]

Fix an involution $w \in \I_\infty^{(n)}$. 
Recall the  
variety $\SymX_w$ from Definition~\ref{SymX-def}
and the ideal
$\SymI_w$ 
 defined in
 \eqref{sym-msv-minors-eq}. 
The quotient
$\KK[\SymMat_{n\times n}]/\SymI_w$ is
a finitely generated $(\KK[V], T)$-module,
so we may introduce the following definitions.

\begin{definition} 
For each $w \in \I_\infty^{(n)}$ 
let
$
 \iGconj_w =\cK(\KK[\SymMat_{n\times n}]/\SymI_w) 
$
and
$
 \iG_w=[\SymX_w]_T.
 $ 
 \end{definition}

From the discussion in the previous section and \cite[\S2.4]{MP2020}, we know that  
$
\iGconj_w  $
and 
$
\iG_w $ are both elements of $ \KK[a_1,a_2,\dots,a_n]$.
 The polynomial $\iG_w$ is essentially the \defn{orthogonal Grothendieck polynomial} denoted by the same notation in \cite{MP2020}, except that here we have set a bookkeeping parameter $\beta=-1$. The substitution $\beta=-1$ loses no information
 since the parameter $\beta$ can be reintroduced by a simple change of variables \cite[\S2.2]{MP2020}.
 
  Recall that Conjecture~\ref{sym-conj3a} asserts in part that $\SymI_w = I(\SymX_w)$, which implies that $\iGconj_w = \iG_w$.
 We adopt this conjecture as a hypothesis throughout this section,
 but preserve the distinct notations $\iGconj_w $ and $ \iG_w$ to clarify our arguments.

Given $m \in \PP$ and $w \in S_\infty$,
 define $1^m\times w$ to be the permutation that fixes each $i \in[m]$ and that maps $i+m \mapsto w(i)+m$ for $i\in\PP$.
 We write $1\times w$ in place of $1^1\times w$.
It is evident that if $w \in  \I_\infty^{(n)}$ then $1^m\times w \in  \I_\infty^{(m+n)}$.
 
  Let $x_1,x_2,x_3,\dots$ be the sequence of commuting formal variables with $x_i = 1-a_i$.
 We may then view  $\iG_w$ (as well as $\iGconj_w $) as an element of the polynomial ring
$ \KK[x_1,x_2,\dots,x_n].
$
  It is an open problem \cite[Prob.~5.3]{MP2020} to show that the \defn{stable limit} 
  \be\label{sl-eq}\textstyle \lim_{m \to \infty}\iG_{1^m \times w}\ee converges 
  in the sense of formal power series 
to an element of $\KK\llbracket x_1,x_2,\dots\rrbracket$ for all $w \in  \I_\infty^{(n)}$.

We prove in this section that if Conjecture~\ref{sym-conj3a} holds then this limit does exist,
and in fact gives a \defn{symmetric function}, meaning a power series invariant under all permutations of the $x$-variables.
This convergence is presently only known to hold unconditionally for involutions $w=w^{-1} \in S_n$ that are \defn{vexillary} in the sense
of being $2143$-avoiding \cite{MP2020}.

\begin{example}
Ikeda and Naruse \cite{IkedaNaruse} defined a family of symmetric functions $\GQ_\lambda \in \ZZ\llbracket x_1,x_2,\dots\rrbracket$ indexed by strict integer partitions $\lambda= (\lambda_1>\lambda_2>\dots\geq0)$ to represent the $K$-theory classes of Schubert varieties in the Lagrangian Grassmannian. For a succinct definition, set $\beta=-1$ in the discussion in \cite[\S4.1]{MP2020}.
 When $w=w^{-1} \in S_n$ is vexillary, 
 there is a specific partition $\lambda=\lambda(w)$
 such that 
$
 \lim_{m \to \infty}\iG_{1^m \times w} = \GQ_\lambda
 $ \cite[Thm.~4.11]{MP2020}.

We report a few computations related to the simplest non-vexillary $w \in I_n$.
Assume $\KK$ has characteristic zero.
For $m=1,2,3,4,5$ we have used  {\tt Macaulay2} to calculate that
\[
\ba \iG_{1^{m-1}\times 2143} = 
2\cdot \GQ_{(2)}(x_1,x_2,\dots,x_{m}) 
&- \GQ_{(2,1)}(x_1,x_2,\dots,x_{m}) 
\\&- \GQ_{(3)}(x_1,x_2,\dots,x_{m}) 
\\&+ \GQ_{(3,1)}(x_1,x_2,\dots,x_{m}) 
+ x_{m+1}\cdot A_m + x_{m+2}\cdot B_m
\ea
\]
for certain extra terms $A_m, B_m \in \KK[x_1,x_2,\dots,x_{m+2}]$.
Here $\GQ_{\lambda}(x_1,x_2,\dots,x_{m})$ means the polynomial obtained from $\GQ_\lambda$ by setting $x_{m+1}=x_{m+2}=\dots=0$, which is zero if $m$ is smaller than the number of nonzero parts in $\lambda$.

It would follow that 
$\lim_{m\to \infty}\iG_{1^{m}\times 2143} = 2\cdot \GQ_{(2)} - \GQ_{(2,1)} -\GQ_{(3)} + \GQ_{(3,1)}$
if this pattern were to continue.
(Notice that the sign of the coefficient of each $\GQ_\lambda$ in this expression is the same as $(-1)^{|\lambda|}$. The question of whether this sign alternation is a general phenomenon is open; see \cite[Prob.~5.4]{MP2020}.)
Our methods are only powerful enough to establish convergence of the stable limit \eqref{sl-eq} when Conjecture~\ref{sym-conj3a} holds, however, rather than any exact formulas.
\end{example}

Before getting to our main results, we need to derive several technical lemmas.

\begin{lemma} \label{lem:stable-intersection}
  Let $\iota : \SymMat_{n\times n} \to \SymMat_{(n+1)\times (n+1)}$ be the inclusion $A \mapsto 1 \times A$. Then $\iota^*( \SymI_{1 \times w}) = \SymI_w$ where $\iota^*$ denotes the induced morphism $\KK[\SymMat_{(n+1)\times (n+1)}] \to \KK[\SymMat_{n\times n}]$.
\end{lemma}

\begin{proof} For each $i,j\in[n]$ let $\cM_{ij}(w)$ be the set of minors of size $\rank w_{[i][j]}+1$ inside $\SymcX_{[i][j]}$, so that the ideal $\SymI_w$ is generated by the set
\be\label{cM-eq}\textstyle \cM(w) := \bigcup_{i,j\in[n]}\cM_{ij}(w).\ee The effect of applying $\iota^*$ is to evaluate a polynomial in $\KK[\SymMat_{(n+1)\times (n+1)}]$ on the matrix $1 \times \SymcX_{[n][n]}$. Setting $\tilde{R} = \{i-1 : i \in R \setminus \{1\}\}$ for a set $R \subseteq [n]$, we therefore have
    \begin{equation*}
        \iota^*( \det \SymcX_{RC} )= \begin{cases}
            0 & \text{if $1 \in R \mathbin{\triangle} C$ (with $\mathbin{\triangle}$ denoting symmetric difference)}\\
            \det \SymcX_{\tilde{R}\tilde{C}} & \text{otherwise}.
        \end{cases}
    \end{equation*}
Now consider the ideal $\iota^* (\SymI_{1 \times w})$  generated by $\bigcup_{i,j \leq n+1} \iota^* \cM_{ij}(1 \times w)$. We observe that:
\begin{itemize}
    \item If $\min(i,j) = 1$, then $\rank (1 \times w)_{[i][j]}+1 = 2$, so $\cM_{ij}(1 \times w)$ is empty.
    \item If $i,j > 1$, then $\rank (1 \times w)_{[i][j]}+1 = \rank w_{[i-1][j-1]}+2$, so $\iota^* (\cM_{ij}(1 \times w))$ is
    the sum of $\cM_{i-1,j-1}(w)$ (coming from the minors involving row and column 1) and certain minors of degree $\rank(w_{[i-1][j-1]})+2$ (coming from the minors not involving row or column 1). However, the latter are in the ideal generated by the former, by the general fact that the ideal of size $r$ minors in a matrix contains the minors of size $r+1$.
\end{itemize}
Thus $\bigcup_{i,j \leq n+1} \iota^*( \cM_{ij}(1\times w))$ and $\cM(w)=\bigcup_{i,j \leq n} \cM_{ij}(w)$ generate the same ideal.
\end{proof}

Below, we distinguish the tori in the algebraic groups $\B_n$ and $\B_{n+1}$ by writing $T_n$ for the former and $T_{n+1}$ for the latter.
Let $t \in T_n$ act on $\SymMat_{(n+1)\times (n+1)}$ by the matrix $\iota(t)$. Then $\iota$ is $T_n$-equivariant, so there is a pullback map $\iota^* : K_{T_n}(\SymMat_{(n+1)\times (n+1)}) \to K_{T_n}(\SymMat_{n\times n})$. The group homomorphism $\iota|_{T_n} : T_n \to T_{n+1}$ also induces a restriction homomorphism 
\[\phi : K_{T_{n+1}}\(\SymMat_{(n+1)\times (n+1)}\) \to K_{T_n}\(\SymMat_{(n+1)\times (n+1)}\),\] which simply sends the $K_{T_{n+1}}$-class of a module to its $K_{T_n}$-class.
\begin{lemma} \label{lem:restriction-action}
  Letting $\iota^*$ and $\phi$ be as above, we have
  \begin{equation*}
    \iota^*(\phi(a_i)) = \begin{cases} 1 & \text{if $i = 1$}\\
      a_{i-1} & \text{otherwise}.
    \end{cases}
  \end{equation*}
\end{lemma}
\begin{proof} By definition, $a_i \in K_{T_{n+1}}(\SymMat_{(n+1)\times (n+1)})$ is $[\KK[\SymMat_{(n+1)\times (n+1)}] \otimes \CC_{a_i}]$ where $\CC_{a_i}$ is the 1-dimensional $T_{n+1}$-representation with character $a_i$. Therefore
  \begin{equation*}
    \phi(a_i) = \Bigl[\Res_{\iota(T_n)}^{T_{n+1}}(\KK[\SymMat_{(n+1)\times (n+1)}] \otimes \CC_{a_i})\Bigr] = \Bigl[\Res_{\iota(T_n)}^{T_{n+1}}(\KK[\SymMat_{(n+1)\times (n+1)}]) \otimes \CC_{a_{i-1}}\Bigr].
  \end{equation*}
  Using the convention that $a_0 = 1$, so that $\CC_{a_0}$ is the trivial representation, this says that 
  \[\phi(a_{i}) = a_{i-1} \in K_{T_n}(\SymMat_{(n+1)\times (n+1)}).\] Now use Lemma~\ref{lem:subrep-pullback}
  to deduce the formula for $ \iota^*(\phi(a_i))$.
\end{proof}

For the next lemma, we make use of the following concepts from \cite[Chapter III]{Hartshorne}.

\begin{definition}[{See \cite[Chapter III, \S9]{Hartshorne}}] Let $C$ be a smooth algebraic curve and $X$ any variety. Let $p_C : X \times C \to C$ and 
    $p_X : X \times C \to X$ be the projections. A collection of subschemes \[\{Z_t \subseteq X : t \in C\}\] is a \defn{flat family} (over $C$) if there is a closed subscheme $Z' \subseteq X \times C$ for which the restriction $p_C|_{Z'}$ is a flat morphism and $Z_t = p_X ( p_C^{-1}(t))$ for all $t \in C$.
\end{definition}

Recall that $G$ is a linear algebraic group.

\begin{lemma} \label{lem:flat-limit} Suppose $X$ is a smooth $G$-variety and $\{Z_t : t \in \CP^1\}$ is a flat family of $G$-stable subschemes over $\CP^1$. Then the classes $[Z_t] \in K_G(X)$ are equal for all $t\in \CP^1$.
\end{lemma}

\begin{proof}
Equipping $C = \CP^1$ with the trivial $G$-action makes $p_C : X \times C \to C$ a $G$-equivariant map. Since $p_C|_{Z'}$ is flat by assumption, we have
\begin{equation*}
    p_C|_{Z'}^*([t]) = [p_C|_{Z'}^{-1}(t)] = [Z_t \times \{t\}] \in K_G(Z').
\end{equation*}
Since the class $[t] \in K_G(\CP^1) = K(\CP^1)$ is independent of $t$, this shows that $[Z_t \times \{t\}] \in K_G(Z')$ is also independent of $t$.

Now consider the commutative diagram 
\begin{equation*}
\begin{CD}
    Z_t \times \{t\}      @>\alpha>>      X \times C\\
    @VV{\tilde{p}_X}V                @VV{p_X}V \\
           X             @>{\id}>>        X
\end{CD}
\end{equation*}  
where $\tilde{p}_X$ is the restriction of $p_X$ to $Z_t \times \{t\}$ and $\alpha$ is inclusion. By \cite[Ch. III, Prop. 9.3]{Hartshorne},
the \defn{higher direct image} functor (as defined \cite[Ch. III, \S8]{Hartshorne}) of the pushforward of $p_X$ satisfies 
\begin{equation*}
    R^i (p_X)_*(\cO_{Z_t \times \{t\}}) = R^i (\tilde{p}_X)_*(\alpha^*\cO_{Z_t \times \{t\}}).
\end{equation*}
Since $\tilde{p}_X$ has 0-dimensional fibers, \cite[Ch. III, Cor. 11.2]{Hartshorne} shows that the right-hand side is 0 for $i > 0$. Therefore the pushforward $(p_X)_* : K_G(X \times C) \to K_G(X)$ sends $[\cO_{Z_t \times \{t\}}]$ to $[(p_X)_* \cO_{Z_t \times \{t\}}]$, i.e., sends the class $[Z_t \times \{t\}]$ to $[p_X(Z_t \times \{t\})] = [Z_t]$. Therefore $[Z_t]$ is also independent of $t$.
\end{proof}

Now suppose $Z \subseteq X \times (C \setminus \{c\})$ is a subscheme where $C$ is a smooth curve and $c \in C$.
Then there is a corresponding  family of subschemes $\{Z_t : t \in C \setminus \{c\}\}$, 
where 
\[ Z_t := \pi_X(Z\cap \pi_C^{-1}(t))\]
 is the fiber of $Z$ over $t$. 
 We define 
$ \lim_{t \to c} Z_t$
  to be the fiber of the closure $\overline{Z} \subseteq X \times C$ over $c$.
  We sometimes denote this limit by
  \be
  \textstyle
   Z_c := \lim_{t \to c} Z_t.
   \ee
   It turns out \cite[Ch. III, Prop.~9.8]{Hartshorne} that 
   if $\{Z_t : t \in C \setminus \{c\}\}$ is a flat family over $C \setminus \{c\}$, then the enlarged family $\{Z_t : t \in C\}$ is also flat over $C$.

Define $p : \SymMat_{(n+1)\times (n+1)} \to \SymMat_{n \times n}$ by the formula $p(A) = A_{[2,n+1][2,n+1]}$. This is a left inverse of the map $\iota$ from Lemma~\ref{lem:stable-intersection}.
For $t \in \KK^\times$, define $e_t : \SymMat_{(n+1) \times (n+1)} \to \SymMat_{(n+1) \times (n+1)}$ by 
    \begin{equation*}
    e_t(A)_{ij} = \begin{cases}
    t A_{11} & \text{if $(i,j) = (1,1)$}\\
    A_{ij} & \text{otherwise}.
    \end{cases}
    \end{equation*}

\begin{lemma} \label{lem:limit-of-minors}   If $w\in \I_\infty^{(n)}$ then $\lim_{t \to 0} e_t(\SymX_{1 \times w})$ contains $p^{-1}(\SymX_w)$.
\end{lemma}

\begin{proof}
Take an arbitrary element $A \in p^{-1}(\SymX_w)$ and write $A = \left[\begin{smallmatrix}  a & v^\top \\ v & A' \end{smallmatrix}\right]$ where $a \in \CC$, $v \in \CC^n$, and $A' \in \SymX_w$. For $t \neq 0$ define
\begin{equation*}
    f(t) = \begin{cases}
        at^{-1} & \text{if $a \neq 0$}\\
        1 & \text{if $a = 0$}
    \end{cases} \qquad \text{and} \qquad M(t) = \begin{bmatrix} f(t) & v^\top \\ v & A' + f(t)^{-1} vv^\top \end{bmatrix}.
\end{equation*}
We claim that $M(t) \in \SymX_{1 \times w}$. Choose indices $i,j \in [n+1]$. We must check that \[\rank M(t)_{[i][j]} \leq \rank (1\times w)_{[i][j]}.\] This is trivial if $i$ or $j$ is $1$, so assume $i,j > 1$. Then we have
\begin{equation*}
    M(t)_{[i][j]} = \begin{bmatrix} f(t) & y^\top \\ x & A'_{[i-1][j-1]} + f(t)^{-1} xy^\top \end{bmatrix}
\end{equation*}
for some $x,y \in\KK^n$. But this matrix is equivalent by row operations to $\left[ \begin{smallmatrix} f(t) & y^\top \\ 0 & A'_{[i-1][j-1]} \end{smallmatrix} \right]$, so
\begin{align*}
\rank  M(t)_{[i][j]}  = \rank \left[ \begin{smallmatrix} f(t) & y^\top \\ 0 & A'_{[i-1][j-1]} \end{smallmatrix} \right]
&\leq 1 + \rank A'_{[i-1][j-1]} 
\\&\leq 1 + \rank w_{[i-1][j-1]} \qquad \text{(since $A' \in \SymX_w$)}\\
&= \rank\, (1\times w)_{[i][j]}.
\end{align*}
Thus $M(t) \in \SymX_{1 \times w}$. Since $\lim_{t\to 0} tf(t) = a$, we have $\lim_{t \to 0} e_t M(t) = A$, so we have shown that $\lim_{t \to 0} e_t \SymX_{1 \times w}$ contains $p^{-1}(\SymX_{w})$ as desired.
\end{proof}

\begin{lemma} \label{lem:stab} 
Suppose Conjecture~\ref{sym-conj3a} holds 
and $w \in \I_\infty^{(n)}$. Then $\iG_w$ is obtained from the polynomial $\iG_{1\times w} \in \KK[a_1,a_2,\dots,a_{n+1}]$ by substituting $a_1\mapsto 0$ and $a_{i+1}\mapsto a_i$ for all $i \in [n]$.
\end{lemma}

\begin{proof}
Recall the definition of $\cM(w)$ from \eqref{cM-eq}.
For $t \neq 0$, let $V_t$ be the variety $e_t(\SymX_{1\times w})$, and set $V_0 = \lim_{t \to 0} V_t$. The ideal of $V_t$ is $(e_t^*)^{-1} I(\SymX_{1 \times w})$, which, assuming Conjecture~\ref{sym-conj3a}, equals $(e_t^*)^{-1} \SymI_{1 \times w}$. The latter is generated by the minors $\cM(1 \times w)$ except that all monomials \emph{not} divisible by $u_{11}$ get multiplied by $t$. 

Setting $t = 0$, we see that $I(V_0)$ contains $u_{11}p^*\cM(w)$. 
Hence $V_0$ is contained in the union 
\[\{u_{11} = 0\} \cup p^{-1}(V(\SymI_w)) = \{u_{11} = 0\} \cup p^{-1}(\SymX_w),\] again using Conjecture~\ref{sym-conj3a}. On the other hand, $V_0$ contains $p^{-1}(\SymX_w)$ by Lemma~\ref{lem:limit-of-minors}. Therefore we can write $V_0 = Z \cup p^{-1}(\SymX_w)$ where $Z$ is a closed subscheme of $\{u_{11} = 0\}$.

Now consider the $K$-class 
\begin{equation*}
[V_0] = [Z] + p^*[\SymX_w] - [Z \cap p^{-1}(\SymX_w)].
\end{equation*}
Any closed subscheme of $\{u_{11} = 0\}$ has zero $K_{T_n}$-class since $T_n$ acts trivially on $u_{11}$. Therefore 
\[
[V_0] = p^*[\SymX_w].\]
The subschemes $e_t V(\SymI_{1 \times w})$ are all isomorphic for $t \neq 0$, so they form a flat family over $\mathbb{A}^1 \setminus \{0\}$ by \cite[Ch. III, Thm.~9.9]{Hartshorne}. Lemma~\ref{lem:flat-limit} therefore implies that
\[
[\SymX_{1 \times w}] = [V_0] = p^*[\SymX_w].\] As $p \circ \iota = \id$, we conclude that $\iota^*[\SymX_{1 \times w}] = [\SymX_w]$. By Lemma~\ref{lem:restriction-action} this completes the proof.
\end{proof}

\begin{lemma} \label{lem:sym} If $w \in \I_\infty^{(n)}$ then the polynomial $\iGconj_{1^m \times w}\in \KK[a_1,a_2,\dots,a_{m+n}]$ is symmetric in the variables $a_1,a_2, \ldots, a_m$. \end{lemma}

\begin{proof}
    Identify $\sigma \in S_m$ with the permutation matrix $\sigma \times 1^n$, so that $S_m$ acts on $\SymMat_{m+n\times m+n}$ by conjugation. The induced map on the coordinate ring $\KK[\SymMat_{m+n\times m+n}]$ sends $u_{ij}$ to $u_{\sigma(i),\sigma(j)}$. For each $i,j$, let $\cM_{ij}$ be the set of minors of size $\rank(1^m \times w)_{[i][j]}+1$ inside $\SymcX_{[i][j]}$. Recall that $\SymI_{1^m \times w}$ is generated by $\bigcup_{i,j} \cM_{ij}$. Now, we observe the following properties:
    \begin{itemize}
        \item If $\min(i,j) \leq m$, then $\rank({1^m \times w})_{[i][j]}+1 = \min(i,j)+1$, so $\cM_{ij}$ is empty.
        \item If $\sigma$ is a permutation fixing every $k > \max(i,j)$, then $\sigma$ maps $\cM_{ij}$ to itself; in particular, this holds if $\sigma \in S_m$ and $i,j > m$.
    \end{itemize}
    It follows that $\sigma$ maps $\bigcup_{i,j} \cM_{ij}$ to itself, and hence also maps $\SymI_{1^m \times w}$ to itself. The action of $\sigma$ on $\SymMat_{m+n\times m+n}$ induces an action on $K_{T_{m+n}}(\SymMat_{m+n\times m+n})$ permuting the variables $a_1,\dots,a_m$, and $\sigma \SymI_{1^m \times w} = \SymI_{1^m \times w}$ implies that $\iGconj_{1^m \times w}  = [V(\SymI_{1^m \times w})]\in \KK[a_1,a_2,\dots,a_{m+n}]$ is also fixed by $\sigma$.
\end{proof}

Recall that  $x_i = 1-a_i$ and that we consider $\iG_w$ as a polynomial in $x_i$.

\begin{theorem} \label{thm:stability}  
Assume that Conjecture~\ref{sym-conj3a} holds. Choose any element $w \in \I_\infty^{(n)}$. Then
the stable limit $\lim_{m \to \infty} \iG_{1^m \times w}$ converges to a symmetric element of $\KK\llbracket x_1,x_2,x_3,\dots,\rrbracket$.
\end{theorem}

\begin{proof}
Write $\iG_{1 \times w} = \iG_{1 \times w}(x_1,x_2,\ldots,x_{n+1})$ and $\iG_w =\iG_w(x_1,x_2,\ldots,x_n)$ as polynomials in the $x$-variables.
Then 
     Lemma~\ref{lem:stab} shows that $\iG_{1 \times w}(0, x_1, \ldots, x_n) = \iG_w(x_1, \ldots, x_n)$. Now if $x_1^{\alpha_1} \cdots x_d^{\alpha_d}$ is some monomial with $d < m$ then 
     \begin{align*}
        [x_1^{\alpha_1} \cdots x_d^{\alpha_d}] \iG_{1^m \times w}(x_1, \ldots, x_{m+n}) &= [x_1^{\alpha_1} \cdots x_d^{\alpha_d}] \iG_{1^m \times w}(x_1, \ldots, x_{m-1}, 0, \ldots, 0)\\
        &= [x_1^{\alpha_1} \cdots x_d^{\alpha_d}] \iG_{1^m \times w}(0, x_1, \ldots, x_{m-1}, 0, 0, \ldots, 0)\\
        &= [x_1^{\alpha_1} \cdots x_d^{\alpha_d}] \iG_{1^{m-1} \times w}(x_1, \ldots, x_{m-1}, 0, 0, \ldots, 0),
     \end{align*}
     using Lemma~\ref{lem:sym} for the second equality (plus the assumption that $\iG_w = \iGconj_w$). By induction 
     \begin{equation*}
        \lim_{m \to \infty} [x_1^{\alpha_1} \cdots x_d^{\alpha_d}] \iG_{1^m \times w} = [x_1^{\alpha_1} \cdots x_d^{\alpha_d}]\iG_{1^d \times w}(x_1, \ldots, x_d, 0, \ldots, 0).
     \end{equation*}
Thus the sequence of coefficients of any fixed monomial in $\iG_{1^m \times w}$ is eventually constant as $m\to \infty$, so  $\lim_{m \to \infty} \iG_{1^m \times w}$ converges to a formal power series, which is symmetric
by Lemma~\ref{lem:sym}.
\end{proof}


\begin{thebibliography}{99}

\bibitem{BagnoCherniavsky} E. Bagno and Y. Cherniavsky, Congruence $B$-orbits and the Bruhat poset of involutions of the symmetric group, \emph{Discrete Math.} \textbf{312} (2012), 1289--1299.


\bibitem{CCG} A. Bj\"orner and F. Brenti, \emph{Combinatorics of Coxeter groups}, Graduate Texts in Maths. 231. Springer, New York, 2005

\bibitem{Bourbaki} N. Bourbaki, {\it Algebra II. Chapters 4--7}, Elements of Mathematics (Berlin), SpringerVerlag, Berlin, 2003, 
Translated from the 1981 French edition by P. M. Cohn and J. Howie,
Reprint of the 1990 English edition.


\bibitem{Cherniavsky} Y. Cherniavsky, On involutions of the symmetric group and congruence B-orbits of anti-symmetric matrices, \emph{J. Algebra Comput.} \textbf{21} (2011), 841--856.

\bibitem{ChGi} N. Chriss and V. Ginzburg, \emph{Representation Theory and Complex Geometry}, Birkh\"auser, Basel, 2010.

\bibitem{Conca} A. Conca, Gr\"obner bases of ideals of minors of a symmetric matrix, \emph{J. Algebra} \textbf{166} (1994), no. 2, 406--421

\bibitem{Conca2} A. Conca, Symmetric Ladders, \emph{Nagoya Math.J.} \textbf{136} (1994), 35--56.

\bibitem{DeNegri} E. De Negri, Pfaffian ideals of ladders, \emph{J. Pure Appl. Algebra} \textbf{125} (1998), 141--153. 

\bibitem{DeNegriSbarra} E. De Negri and E. Sbarra, Gr\"obner bases of ideals cogenerated by Pfaffians, \emph{J. Pure Appl. Algebra} \textbf{215} (2011), 812--821. 

\bibitem{DeKr} J. Deng and A. Kretschmer, Ideals of submaximal minors of sparse symmetric matrices, \emph{J. Pure Appl. Algebra} \textbf{228} (2024), no. 6, 107595.

\bibitem{EFRW} L. Escobar, A. Fink, J. Rajchgot, and A. Woo, Gr\"obner bases, symmetric matrices, and type C Kazhdan--Lusztig varieties, \emph{J. Lond. Math.} \textbf{109} 2024, no. 2, e12856.

\bibitem{FRS} A. Fink, J. Rajchgot, and S. Sullivant, Matrix Schubert varieties and Gaussian conditional independence models, \emph{J. Algebraic Combin.} \textbf{44} (2016), no. 4, 1009--1046.


\bibitem{FultonEssentialSet} W. Fulton, Flags, Schubert polynomials, degeneracy loci, and determinantal formulas, \emph{Duke Math. J.} \textbf{65} (1992), 381--420.

\bibitem{GaoYong} S. Gao and A. Yong, Minimal equations for matrix Schubert varieties, \emph{J. Commut. Algebra} \textbf{16} (2024), no. 3, 267--273.

\bibitem{Gorla} E. Gorla, Symmetric ladders and G-biliaison, Liaison, Schottky problem and invariant theory, \emph{Progr. Math.} \textbf{280}, Birkh\"auser Verlag, Basel, 2010, 49--62.

\bibitem{GMN} E. Gorla, J. C. Migliore, and U. Nagel, Gr\"obner bases via linkage, \emph{J. Algebra} \textbf{384} (2013), 110--134.



\bibitem{HMP3} Z. Hamaker, E. Marberg, and B. Pawlowski, Transition formulas for involution Schubert polynomials, \emph{Selecta Math. (N.S.)} \textbf{24} (2018), no. 4, 2991--3025.

\bibitem{HMP6} Z. Hamaker, E. Marberg, and B. Pawlowski, Involution pipe dreams, \emph{Canad. J. Math.} \textbf{74} (2022), no. 5, 1310--1346.



\bibitem{HPW} Z. Hamaker, O. Pechenik, and A. Weigandt, Gr\"obner geometry of Schubert polynomials through ice, \emph{Adv. Math.} \textbf{398} (2022), 108228.

\bibitem{Hartshorne} R. Hartshorne, \emph{Algebraic Geometry}, Springer-Verlag, New York, 1977.


\bibitem{HerzogTrung} J. Herzog and N. V. Trung, Gr\"obner bases and multiplicity of determinantal and Pfaffian ideals, \emph{Adv. Math.} \textbf{96} (1992), 1--37.

\bibitem{IkedaNaruse} T. Ikeda and H. Narusem $K$-theoretic analogues of factorial Schur $P$- and $Q$-functions,  \emph{Adv. Math.} \textbf{243} (2013), 22--66.

\bibitem{JonssonWelker} J. Jonsson and V. Welker, A spherical initial ideal for Pfaffians, \emph{Illinois J. Math.} \textbf{51} (2007), 1397--1407.

\bibitem{KleWei} P. Klein and A. Weigandt, Bumpless pipe dreams encode Gr\"obner geometry of Schubert polynomials, preprint (2021), {\tt arXiv:2108.08370}.

 \bibitem{KnutsonMiller} A. Knutson and E. Miller, Gr\"obner geometry of Schubert polynomials, \emph{Ann. Math.} \textbf{161} (2005), 1245--1318.
 
\bibitem{KMY}  A. Knutson, E. Miller, and A. Yong, Gr\"obner geometry of vertex decompositions and
of flagged tableaux, \emph{J. Reine Angew. Math.} \textbf{630} (2009), 1--31.
 
 \bibitem{Kutz} R. E. Kutz, Cohen-Macaulay Rings and Ideal Theory in Rings of Invariants of Algebraic Groups \emph{Trans. Amer. Math. Soc.} \textbf{194} (1974), 115--129.
 

\bibitem{MP2020} E. Marberg and B. Pawlowski, $K$-theory formulas for orthogonal and symplectic orbit closures, \emph{Adv. Math.} \textbf{372} (2020), 107299.

 \bibitem{MP2021} E. Marberg and B. Pawlowski, On some properties of symplectic Grothendieck polynomials, \emph{J. Pure Appl. Algebra} \textbf{225} (2021), no. 1, 106463.
 
  \bibitem{MP2021b} E. Marberg and B. Pawlowski, Principal specializations of Schubert polynomials in classical types, \emph{Algebraic Combinatorics}, \textbf{4} (2021), no. 2, 273--287.


\bibitem{MP2022} E. Marberg and B. Pawlowski,
Gr\"obner geometry for skew-symmetric matrix Schubert varieties,
\emph{Adv. Math.} \textbf{405} (2022), 108488.

\bibitem{MillerSturmfels} E. Miller and B. Sturmfels, \emph{Combinatorial commutative algebra}, vol. 227, Springer Science \& Business Media, 2004.



\bibitem{RaghavanUpadhyay} K. N. Raghavan and S. Upadhyay, Initial ideals of tangent cones to Schubert varieties in orthogonal Grassmannians, \emph{J. Combin. Theory Ser. A} \textbf{116} (2009), 663--683. 

\bibitem{Ramanathan} A. Ramanathan. Schubert varieties are arithmetically Cohen-Macaulay, \emph{Invent. math.} \textbf{80} (1985), 2833--294.


\end{thebibliography}
\end{document}